\newtheorem{thm}{Theorem}[section]
\newaliascnt{prop}{thm}
\newtheorem{prop}[prop]{Proposition}
\newaliascnt{cor}{thm}
\newtheorem{cor}[cor]{Corollary}
\theoremstyle{definition}
\newaliascnt{defn}{thm}
\newtheorem{defn}[defn]{Definition}
\newaliascnt{example}{thm}
\newtheorem{example}[example]{Example}
\newaliascnt{prob}{thm}
\newtheorem{prob}[prob]{Problem}
\providecommand{\PP}{\mathbb{P}} 
\providecommand{\N}{\mathbb{N}} 
\providecommand{\Z}{\mathbb{Z}} 
\providecommand{\R}{\mathbb{R}} 
\providecommand{\F}{\mathbb{F}} 
\DeclareMathOperator{\dom}{Dom} 
\providecommand{\set}[2][]{
	\ifthenelse{\isempty{#1}}{
		\left\{#2\right\}
	}{
		\left\{\,#1\;\middle|\;#2\,\right\}}
	}
\DeclareMathOperator{\pow}{Sb} 
\newcommand{\abs}[1]{\left\lvert#1\right\rvert} 
\DeclareMathOperator{\orb}{Orb} 
\DeclareMathOperator{\alt}{Alt} 
\DeclareMathOperator{\perm}{Perm} 
\DeclareMathOperator{\im}{Im} 
\DeclareMathOperator{\aut}{Aut} 
\DeclareMathOperator{\spn}{Span} 
\providecommand{\dd}{\mathrm{d}} 
\providecommand{\diff}[2]{\frac{\dd#1}{\dd#2}} 
\providecommand{\pdif}[2]{\frac{\partial#1}{\partial#2}} 
\providecommand{\integ}[4]{\int_{#1}^{#2}#3\,\dd#4} 
\DeclareMathOperator{\setcat}{Set} 
\DeclareMathOperator{\vertices}{Vert} 
\DeclareMathOperator{\scmplx}{SCmplx} 
\DeclareMathOperator{\fct}{Fct} 
\DeclareMathOperator{\pmfld}{PMfld} 
\DeclareMathOperator{\simpcmplx}{Sim} 
\DeclareMathOperator{\magm}{Mag} 
\DeclareMathOperator{\quasi}{Quas} 
\DeclareMathOperator{\aq}{AQ} 
\DeclareMathOperator{\ncaq}{NCAQ} 
\DeclareMathOperator{\nct}{NCT} 
\DeclareMathOperator{\inp}{In} 
\DeclareMathOperator{\out}{Out} 
\DeclareMathOperator{\free}{Fr} 
\DeclareMathOperator{\cg}{Cg} 
\DeclareMathOperator{\topsp}{Top} 
\DeclareMathOperator{\mfld}{Mfld} 
\DeclareMathOperator{\smfld}{SMfld} 
\DeclareMathOperator{\geo}{Geo} 
\DeclareMathOperator{\ogeo}{OGeo} 
\DeclareMathOperator{\cvx}{Cvx} 
\DeclareMathOperator{\ocvx}{OCvx} 
\DeclareMathOperator{\bipyr}{Bipyr} 
\DeclareMathOperator{\oser}{OSer} 
\DeclareMathOperator{\ncvert}{NCVert} 
\DeclareMathOperator{\ncedge}{NCEdge} 
\DeclareMathOperator{\ncgr}{NCGrph} 
\DeclareMathOperator{\ser}{Ser} 
\DeclareMathOperator{\cmplt}{Cmplt} 
\DeclareMathOperator{\eucmplt}{EuCmplt} 
\DeclareMathOperator{\riem}{Riem} 
\begin{document}
\title{Orientable triangulable manifolds are essentially quasigroups}
\author[C. Aten]{Charlotte Aten}
\address{Department of Mathematics\\
University of Denver\\Denver 80208\\USA}
\urladdr{\href{https://aten.cool}{https://aten.cool}}
\email{\href{mailto:charlotte.aten@du.edu}{charlotte.aten@du.edu}}
\author[S. Yoo]{Semin Yoo}
\address{School of Computational Sciences\\
Korea Institute for Advanced Study\\Seoul 02455\\South Korea}
\urladdr{\href{http://yooso.cool}{http://yooso.cool}}
\email{\href{mailto:syoo19@kias.re.kr}{syoo19@kias.re.kr}}
\thanks{The second author is supported by the KIAS Individual Grant (CG082701) at
Korea Institute for Advanced Study. Thanks to Jonathan Smith for providing the quasigroup in \autoref{ex:field} and thanks to Jonathan Pakianathan for suggesting this topic to the second author as a possible subject for her PhD thesis.}
\subjclass[2020]{20N15,57N99,53A99,20N05,05B15}
\keywords{Quasigroups, Riemannian geometry, orientable triangulable manifolds, Latin cubes}

\begin{abstract}
We introduce an \(n\)-dimensional analogue of the construction of tessellated surfaces from finite groups first described by Herman and Pakianathan. Our construction is functorial and associates to each \(n\)-ary alternating quasigroup both a smooth, flat Riemannian \(n\)-manifold which we dub the open serenation of the quasigroup in question, as well as a topological \(n\)-manifold (the serenation of the quasigroup) which is a subspace of the metric completion of the open serenation. We prove that every connected orientable smooth manifold is serene, in the sense that each such manifold is a component of the serenation of some quasigroup. We prove some basic results about the variety of alternating \(n\)-quasigroups and note connections between our construction, Latin hypercubes, and Johnson graphs.
\end{abstract}

\maketitle

\tableofcontents

\section{Introduction}
This work builds on the ideas behind the construction of tessellated surfaces from finite groups given by Herman and Pakianathan \cite{herman}. They produced a functor from a category of nonabelian finite groups equipped with certain homomorphisms preserving noncommutativity to a category of singular, oriented \(2\)-manifolds. They then desingularized these manifolds into compact orientable surfaces. The group used in this construction (or a subquotient, or its automorphism group) would then act on the cells of a functorially-occurring polyhedral tessellation of the resulting manifold, yielding a faithful, orientation-preserving group action. Herman and Pakianathan's motivation was the manufacture of these actions and the study of finite groups through these associated surfaces.

Our motivation is the exact opposite. In higher dimensions the orientable manifolds are much less well understood so we reverse course by showing that the same method allows us to understand higher-dimensional manifolds by working with certain discrete algebraic structures. In order to extend this technique to dimensions \(3\) and above a radically different formalism is needed, although we will see in \autoref{sec:serenation} that when applied to a finite group it produces the same results Herman and Pakianathan saw. We hope that the reader will agree that it is morally the correct generalization. The appropriate analogue of a group must be found (which is not an \(n\)-ary group in the sense Post introduced in \cite{post}, as one might have guessed) and one must find a new technique for desingularization of singular \(n\)-manifolds. We supply these concepts.

We construct smooth \(n\)-dimensional manifolds from \(n\)-ary analogues of quasigroups. An \(n\)-quasigroup consists of a set \(A\) and a multiplication operation \(f\colon A^n\to A\) such that the equation
    \[
        f(x_1,\dots,x_n)=x_{n+1}
    \]
has a unique solution whenever all but one of the \(x_i\) are fixed. This is equivalent to the graph
    \[
        f^\square=\set[(x_1,\dots,x_n,f(x_1,\dots,x_n))\in A^{n+1}]{x_1,\dots,x_n\in A}
    \]
being a Latin \(n\)-cube in the sense that the first \(n\)-coordinates give the location of a cell and the final coordinate is its label. A Latin \(2\)-cube is the familiar Latin square from combinatorics. For each \(n\) we produce an \emph{open serenation functor} \(\oser_n\) from a wide subcategory \(\ncaq_n\) of a variety of \(n\)-quasigroups to a category of smooth \(n\)-manifolds. For us, a class of \(n\)-quasigroups we call \emph{alternating} will play the role of nonabelian groups in Herman and Pakianathan's work. Given any alternating \(n\)-quasigroup \(A\) we will find that \(\oser_n(A)\) is orientable and consists of a (perhaps uncountable) collection of second countable connected components. Moreover, \(\oser_n(A)\) inherits a metric from the \(n\)-quasigroup structure on \(A\) which makes \(\oser_n(A)\) into a Riemannian manifold. Certain homomorphisms \(A\to B\) of \(n\)-quasigroups yield smooth maps \(\oser_n(A)\to\oser_n(B)\) which are local isometries everywhere with respect to these metrics.

We can use this metric structure to manufacture another functor, the \emph{serenation functor} denoted by \(\ser_n\), from the same category of \(n\)-quasigroups to a category of topological manifolds. This functor is in a sense the metric completion of \(\oser_n\). Although in dimensions \(4\) and above we can't upgrade the codomain of this functor to again be a category of smooth manifolds, it does have a redeeming property, which is that every connected orientable triangulable topological \(n\)-manifold is a component of the serenation of a quasigroup. This is \autoref{thm:orientable_triangulable}, which has as an immediate corollary \autoref{cor:smooth_manifold}, which says that every connected orientable smooth \(n\)-manifold is a component of the serenation of an alternating \(n\)-quasigroup.

As indicated by the mention of a subcategory of a variety of \(n\)-quasigroups above, the term \emph{variety} here (and throughout this paper) is being used in the sense of universal algebra rather than algebraic geometry. A \emph{variety} (or \emph{variety of (universal) algebras}) is a class of algebras closed under taking homomorphic images, products, and subalgebras, which is equivalent by Birkhoff's Theorem to saying that a variety is a class of algebras defined by universally-quantified equations. For example, abelian groups form a variety, but the class of nonabelian groups is not a variety. Every variety of algebras is a category whose morphisms are homomorphisms of algebras. The varieties of \(n\)-quasigroups which we utilize have not to our knowledge been studied before. However, there is a long history of interplay between certain varieties of \(n\)-quasigroups and both combinatorics and topology. On the combinatorial side, varieties of \(n\)-quasigroups often correspond to certain nice classes of Latin squares or Latin (hyper)cubes \cite{chaffer,csima}. Even when the algebraic viewpoint is not present, as in \cite{kuhl}, we will see in \autoref{sec:latin_cubes} that we encounter very similar problems in our setting. Historically the study of loops (quasigroups with identity) and finite projective planes have been closely intertwined \cite{albert}. Recently there has been a great deal of interplay between ternary quasigroups and knot theory \cite{niebrzydowski}. The present work may be taken as evidence that the aforementioned connection between quasigroups and knot theory is an example of a general pattern in which topology and quasigroup theory inform each other, rather than a special quirk of the theory of knots.

Our \autoref{prob:finiteness_compactness} asks whether it is possible to realize every connected compact orientable triangulable manifold as a homeomorphic copy of a component of the serenation of some finite alternating quasigroup. An affirmative answer to this question would be implied by that to a combinatorial question, which is our \autoref{prob:evans} and which is a generalized version of the Evans conjecture about the completion of partial Latin squares.

We are concerned with the actions of the symmetric and alternating groups on \(n\)-tuples from a set \(A\). Here we give our conventions in this direction. We take \(\PP=\set{1,2,3,\dots}\) and \(\N=\set{0,1,2,\dots}\). Given \(n\in\PP\) we define \([n]=\set{1,2,3,\dots,n}\). Given a set \(S\) we denote by \(\perm_S\) (or \(\perm(S)\)) the group of permutations of \(S\) and we denote by \(\alt_S\) the group of even permutations of \(S\). When \(S=[n]\) for some \(n\in\PP\) we write \(\perm_n\) and \(\alt_n\) rather than \(\perm_{[n]}\) and \(\alt_{[n]}\), respectively. Given a group \(G\) and an action \(h\colon G\to\perm_S\) of \(G\) on a set \(S\) we denote by \(\orb(h)\) the collection of orbits of \(S\) under the action \(h\) and we denote by \(\orb_h(s)\) the orbit of \(s\in S\) under the action \(h\). When \(h\) is understood by context we write \(\orb_{G}(S)\) (or even just \(\orb(S)\)) instead of \(\orb(h)\). Similarly we write \(\orb_{G}(s)\) (or even just \(\orb(s)\)) instead of \(\orb_h(s)\) for \(s\in S\) when context makes the action clear. We denote by \(\pow(S)\) the collection of all finite subsets of the set \(S\) and we denote by \(\abs{S}\) the cardinality of the set \(S\). We write \(A\le B\) to indicate that an algebra \(A\) (such as a group) is a subalgebra of the algebra \(B\) (which is necessarily of the same signature).

Here a \emph{manifold} is a finite-dimensional real manifold without boundary which, while not necessarily second countable, is taken to consist of a collection (whose cardinality is unconstrained) of second countable connected components. A \emph{smooth manifold} is a manifold in the previous sense equipped with a smooth atlas. We denote by \(\mfld_n\) and \(\smfld_n\) the categories of \(n\)-dimensional manifolds and \(n\)-dimensional smooth manifolds, respectively. A \emph{Riemannian manifold} is a pair \((M,g)\) where \(M\) is a smooth manifold in the previous sense and \(g\) is a smooth Riemannian metric on \(M\). We write \(\riem_n\) to indicate the category whose objects are \(n\)-dimensional Riemannian manifolds and whose morphisms are smooth maps which are local isometries everywhere.

We outline here the structure of the paper. In \autoref{sec:pseudomanifolds} we give some preliminaries on simplicial complexes and pseudomanifolds. We do the same for algebraic structures in \autoref{sec:algebraic_prelim}. We introduce the algebras of primary concern for us, which are alternating quasigroups, in \autoref{sec:alternating_quasigroups}. With the basic algebra and topology in place, we define and see examples of open serenation, our Riemannian manifold construction, in \autoref{sec:open_serenation}. The following section, \autoref{sec:serenation}, details the serenation functor, which is a sort of metric completion of the open serenation functor. It is in this section that we prove our \autoref{thm:orientable_triangulable}, which says that all connected orientable triangulable topological manifolds may be produced via this construction. In \autoref{sec:latin_cubes} we examine similar questions to those answered in the preceding section, but with finiteness and compactness constraints added.

\section{Pseudomanifolds}
\label{sec:pseudomanifolds}
In this section we detail the basic tools pertaining to pseudomanifolds which we will use in our construction. We refer to Munkres \cite{munkres} for basic topological definitions but we deviate from his terminology in several places.

\subsection{Simplicial complexes}
Pseudomanifolds are a family of particularly well-behaved simplicial complexes, so we begin by discussing this latter class of combinatorial objects.

\begin{defn}[Simplicial complex]
A \emph{simplicial complex} is a set of sets \(\Gamma\) such that for each \(\gamma\in\Gamma\) and each \(\gamma'\subseteq\gamma\) we have that \(\gamma'\in\Gamma\).
\end{defn}

Our simplicial complexes are elsewhere known as \emph{abstract simplicial complexes}.

\begin{defn}[Vertices of a simplicial complex]
Given a simplicial complex \(\Gamma\) we say that a member of a singleton set \(\gamma\subset\Gamma\) is a \emph{vertex} of \(\Gamma\). We write \(\vertices(\Gamma)\) to denote the set of vertices of the simplicial complex \(\Gamma\).
\end{defn}

We have terminology for general members \(\gamma\) of a simplicial complex \(\Gamma\), as well.

\begin{defn}[Face]
Given a simplicial complex \(\Gamma\) we refer to an element \(\gamma\in\Gamma\) as a \emph{face} of \(\Gamma\). Given two faces \(\gamma,\gamma'\in\Gamma\) we say that \(\gamma'\) is a \emph{face} of \(\gamma\) when \(\gamma'\subseteq\gamma\).
\end{defn}

Our simplicial complexes always have the empty set as a face.

\begin{defn}[Simplicial map]
Given simplicial complexes \(\Gamma_1\) and \(\Gamma_2\) we say that a function \(h\colon\vertices(\Gamma_1)\to\vertices(\Gamma_2)\) is a \emph{simplicial map} from \(\Gamma_1\) to \(\Gamma_2\) and write \(h\colon \Gamma_1\to\Gamma_2\) when for each \(\gamma\in\Gamma_1\) we have that \(\set[h(s)\in\vertices(\Gamma_2)]{s\in\gamma}\) is a face of \(\Gamma_2\).
\end{defn}

Simplicial maps are the morphisms in the category associated to simplicial complexes.

\begin{defn}[Category of simplicial complexes]
The \emph{category of simplicial complexes} \(\scmplx\) is the category whose objects are simplicial complexes, whose morphisms are simplicial maps, and whose identities and composition are those induced by the underlying functions of those simplicial maps.
\end{defn}

We will fix a positive integer \(n\) and focus our attention on complexes whose facets are in the following sense \(n\)-dimensional.

\begin{defn}[Dimension of a face]
Given a simplicial complex \(\Gamma\) and a face \(\gamma\in\Gamma\) with \(\abs{\gamma}\in\N\) we say that the \emph{dimension} of \(\gamma\) is \(\abs{\gamma}-1\), that \(\gamma\) is an \emph{(\(\abs{\gamma}-1)\)-face}, or that \(\gamma\) is \emph{(\(\abs{\gamma}-1\))-dimensional}.
\end{defn}

In particular this means that every simplicial complex has exactly one \((-1)\)-dimensional face, the empty face. On the opposite end of the size spectrum we have facets.

\begin{defn}[Facet]
Given a simplicial complex \(\Gamma\) we say that a face \(\gamma\in\Gamma\) is a \emph{facet} of \(\Gamma\) when \(\gamma\) is maximal with respect to subset inclusion among faces of \(\Gamma\).
\end{defn}

\begin{defn}[Facets of a simplicial complex]
We denote the set of all facets of a simplicial complex \(\Gamma\) by \(\fct(\Gamma)\).
\end{defn}

It is possible that a simplicial complex has no facets or that a simplicial complex has some facets but there are faces which are not contained in any facet. These pathologies won't arise in the cases we actually consider.

\begin{defn}[Pure simplicial complex]
We say that a simplicial complex \(\Gamma\) is \emph{pure} when
    \begin{enumerate}
        \item given any \(\gamma,\gamma'\in\fct(\Gamma)\) we have that \(\abs{\gamma}=\abs{\gamma'}\) and
        \item given any \(\gamma\in\Gamma\) there exists some \(\gamma'\in\fct(\Gamma)\) such that \(\gamma\subseteq\gamma'\).
    \end{enumerate}
\end{defn}

\begin{defn}[Dimension of a pure simplicial complex]
When \(\Gamma\) is a pure simplicial complex such that each \(\gamma\in\fct(\Gamma)\) is \(n\)-dimensional we say that \(\Gamma\) is an \emph{\(n\)-dimensional simplicial complex}.
\end{defn}

\subsection{Pseudomanifolds}
Pseudomanifolds are an even more special class of simplicial complexes contained within the class of pure simplicial complexes. Intuitively, \(n\)-pseudomanifolds look like \(n\)-dimensional manifolds, except that their \((n-2)\)-skeleta may be highly singular.

\begin{defn}[Pseudomanifold]
We say that an \(n\)-dimensional simplicial complex \(\Gamma\) is an \emph{\(n\)-pseudomanifold} when given an \((n-1)\)-face \(\gamma_1\in\Gamma\) there exist exactly two facets \(\gamma_2,\gamma_3\in\Gamma\) such that \(\gamma_2\cap\gamma_3=\gamma_1\).
\end{defn}

We want to consider morphisms of pseudomanifolds which respect the pseudomanifold structure, so general simplicial maps will not suffice.

\begin{defn}[Pseudomanifold morphism]
Given pseudomanifolds \(\Gamma_1\) and \(\Gamma_2\) we say that a simplicial map \(h\colon\Gamma_1\to\Gamma_2\) is a \emph{morphism of pseudomanifolds} when for each facet \(\gamma\) of \(\Gamma_1\) we have that \(h(\gamma)\) is a facet of \(\Gamma_2\).
\end{defn}

Pseudomanifolds and their morphisms form a category.

\begin{defn}[Category of \(n\)-pseudomanifolds]
We refer to the subcategory of \(\scmplx\) whose objects are \(n\)-pseudoomanifolds and whose morphisms are all pseudomanifold morphisms as the \emph{category of \(n\)-pseudomanifolds}, which we denote by \(\pmfld_n\).
\end{defn}

\subsection{Geometric realization}
\label{subsec:geometric_realization}
We collect here the terminology we need about geometric realization of simplicial complexes. We denote by \(\topsp\) the category whose objects are spaces, whose morphisms are continuous maps, whose identities are the usual identity functions, and whose composition is the usual composition of functions.

Given a set \(S\) we denote by \(\R^S\) the \(S\)-fold Cartesian power of the real line equipped with the Euclidean topology. Given \(X\subseteq\R^S\) we denote by \(\cvx(X)\) the (closed) convex hull of \(X\) and by \(\ocvx(X)\) the interior of \(\cvx(X)\) in the affine hull of \(X\). Given any \(\gamma\subseteq S\) we identify each member of \(\gamma\) with the corresponding basis element of \(\R^S\) so that \(\ocvx(\gamma)\) is the interior (in the appropriate affine subspace) of the simplex in \(\R^S\) whose vertices form the set \(\gamma\).

We make use of a nonstandard geometric realization of pseudomanifolds, in the sense that this geometric realization is not the restriction of the usual geometric realization of simplicial complexes to the category of \(n\)-pseudomanifolds. Our geometric realization of pseudomanifolds drops the \((n-2)\)-skeleton of a pseudomanifold in order to dispose of the singularities present there.

\begin{defn}[Open geometric realization functor]
Fix \(n\in\PP\). The \emph{open geometric realization functor}
    \[
        \ogeo_n\colon\pmfld_n\to\topsp
    \]
is defined as follows. Given an \(n\)-pseudomanifold \(\Gamma\) we define \(\ogeo_n(\Gamma)\subseteq\R^{\vertices(\Gamma)}\) to be the set
    \[
        \ogeo_n(\Gamma)=\bigcup_{\mathclap{\substack{\gamma\in\Gamma\\\abs{\gamma}\in\set{n,n+1}}}}\ocvx(\gamma)
    \]
equipped with the subspace topology which \(\ogeo_n(\Gamma)\) inherits from \(\R^{\vertices(\Gamma)}\).

Given an \(n\)-pseudomanifold morphism \(h\colon\Gamma_1\to\Gamma_2\) we define
    \[
        \ogeo_n(h)\colon\ogeo(\Gamma_1)\to\ogeo_n(\Gamma_2)
    \]
by the rule
    \[
        \ogeo_n(h)\left(\sum_{s\in\gamma}\alpha_s s\right)=\sum_{s\in\gamma}\alpha_sh(s)
    \]
when \(\gamma\in\Gamma_1\) and \(\sum_{s\in\gamma}\alpha_s s\in\ocvx(\gamma)\).
\end{defn}

For a general simplicial complex we have the usual (closed) geometric realization functor.

\begin{defn}[Geometric realization functor]
The \emph{geometric realization functor}
    \[
        \geo\colon\simpcmplx\to\topsp
    \]
is defined as follows. Given a simplicial complex \(\Gamma\) we define \(\geo(\Gamma)\subseteq\R^{\vertices(\Gamma)}\) to be the set
    \[
        \geo(\Gamma)=\bigcup_{\gamma\in\Gamma}\cvx(\gamma)
    \]
equipped with the subspace topology which \(\geo(\Gamma)\) inherits from \(\R^{\vertices(\Gamma)}\).

Given a simplicial map \(h\colon\Gamma_1\to\Gamma_2\) we define
    \[
        \geo(h)\colon\geo(\Gamma_1)\to\geo(\Gamma_2)
    \]
by the rule
    \[
        \geo_n(h)\left(\sum_{s\in\gamma}\alpha_s s\right)=\sum_{s\in\gamma}\alpha_sh(s)
    \]
when \(\gamma\in\Gamma_1\) and \(\sum_{s\in\gamma}\alpha_s s\in\cvx(\gamma)\).
\end{defn}

\section{Algebraic preliminaries}
\label{sec:algebraic_prelim}
The inputs in our construction are a family of algebras equipped with a particular class of homomorphisms between them. In this section we set up all the pertinent algebraic machinery.

\subsection{Magmas}
We introduce the basic notation and terminology we will need with respect to magmas. We use Bergman \cite{bergman}, Smith and Romanowska \cite{smith}, and Burris and Sankpanavar \cite{burris} as general references for universal algebra.

\begin{defn}[Magma]
Given some \(n\in\PP\) we refer to a set \(A\) equipped with an \(n\)-ary operation \(f\colon A^n\to A\) as a \emph{magma} (or as an \emph{\(n\)-magma} when we want to emphasize the arity of \(f\)). We sometimes write \((A,f)\) rather than \(A\) when we want to give an explicit name to the \(n\)-ary operation when introducing a magma. Otherwise, we assume the \(n\)-ary operation of \(A\) is denoted by \(f\).
\end{defn}

Note that this definition is broader than what is traditionally meant by ``binar'', ``groupoid'', or ``magma'' (in the sense of Bourbaki), which all refer to only the case where \(f\colon A^2\to A\) is a binary operation.

Magma homomorphisms are defined analogously to those for more familiar algebras.

\begin{defn}[Magma homomorphism]
Given \(n\)-magmas \((A,f)\) and \((B,g)\) we say that a function \(h\colon A\to B\) is a \emph{homomorphism} from \(A\) to \(B\) when for all \(a_1,\dots,a_n\in A\) we have that
    \[
        h(f(a_1,\dots,a_n))=g(h(a_1),\dots,h(a_n)).
    \]
We write \(h\colon A\to B\) to indicate that \(h\) is a homomorphism from \(A\) to \(B\).
\end{defn}

Magmas are the objects of a category whose morphisms are magma homomorphisms.

\begin{defn}[Category of magmas]
The \emph{category of \(n\)-magmas} \(\magm_n\) is the category whose objects are \(n\)-magmas, whose morphisms are \(n\)-magma homomorphisms, and whose identities and composition are those induced by the underlying functions of those homomorphisms.
\end{defn}

As is standard in universal algebra, we fix a signature \(\rho\colon I\to\N\) and write \(t_1(x_1,\dots,x_n)\approx t_2(x_1,\dots,x_n)\) (or \(t_1(x)\approx t_2(x)\), or even \(t_1\approx t_2\)) to indicate the formal expression (or \emph{identity})
    \[
        (\forall x_1,\dots,x_n)(t_1(x_1,\dots,x_n)=t_2(x_1,\dots,x_n))
    \]
where \(t_1\) and \(t_2\) are terms obtained by formally composing basic operation symbols \(\{f_i\}_{i\in I}\) where the arity of \(f_i\) is \(\rho(i)\).

Given an algebra \(A\) of signature \(\rho\) we write \(f_i^A\) to indicate the \(i^{\text{th}}\) basic operation of \(A\). We say that \(A\) \emph{models} \(t_1(x)\approx t_2(x)\) when
    \[
        (\forall x_1,\dots,x_n\in A)(t_1^A(x_1,\dots,x_n)=t_2^A(x_1,\dots,x_n))
    \]
where \(t_j^A\) is the term \(t_j\) viewed as an actual operation on \(A\) obtained by replacing each \(f_i\) appearing in \(t_j\) by the basic operation \(f_i^A\) of \(A\). In the case that \(A\) models \(t_1\approx t_2\) we write \(A\models t_1\approx t_2\).

We associate to each \(n\)-magma a particular group of permutations of \([n]\).

\begin{defn}[Permutomorphism]
Given a magma \(A\) we say that a permutation \(\alpha\in\perm_n\) is a \emph{permutomorphism} of \(A\) when
    \[
        A\models f(x_1,\dots,x_n)\approx f(x_{\alpha(1)},\dots,x_{\alpha(n)}).
    \]
\end{defn}

\begin{defn}[Permutomorphism group]
We denote by \(\perm(f)\) the group of all permutomorphisms of the magma \((A,f)\).
\end{defn}

We write \(\aut(A)\) to indicate the automorphism group of \((A,f)\), which is in general a different group from the permutomorphism group \(\perm(f)\) of \((A,f)\). Elements of \(\aut(A)\) are permutations of \(A\) while elements of \(\perm(A)\) are permutations of \([n]\).

We are particularly interested in the cases where \(\perm(f)\) is either \(\perm_n\) or \(\alt_n\).

\begin{defn}[Commutative magma]
We say that an \(n\)-magma \((A,f)\) is \emph{commutative} when \(\perm(f)=\perm_n\).
\end{defn}

\begin{defn}[Alternating magma]
We say that an \(n\)-magma \((A,f)\) is \emph{alternating} when \(\perm(f)\ge\alt_n\).
\end{defn}

It will turn out that most of the examples of alternating magmas we care to look at do have \(\perm(f)=\alt_n\), but we only require that \(\alt_n\) is a subgroup of \(\perm(f)\) in order to say that \((A,f)\) is alternating. In particular, every commutative magma is alternating.

\subsection{Quasigroups}
Intuitively, quasigroups are magmas in which division is always possible. We recall the definition of an \(n\)-quasigroup. Our definition is in analogy with Birkhoff's equational axioms for binary quasigroups \cite[p.5]{bergman}.

\begin{defn}[Quasigroup]
Given \(n\in\PP\) we say that an algebra
    \[
        (A,f,g_1,\dots,g_n)
    \]
of signature \((n,\dots,n)\) is a \emph{quasigroup} (or an \emph{\(n\)-quasigroup} when we want to emphasize the arity of \(f\)) when for each \(i\in[n]\) we have that \[A\models f(x_1,\dots,x_{i-1},g_i(x_1,\dots,x_{i-1},x_{i+1},\dots,x_n,y),x_{i+1},\dots,x_n)\approx y\] and \[A\models g_i(x_1,\dots,x_{i-1},x_{i+1},\dots,x_n,f(x_1,\dots,x_n))\approx x_i.\]
\end{defn}

It follows immediately from the definition of an \(n\)-quasigroup that for each \(n\in\PP\) we have that the class of \(n\)-quasigroups is an equational class and hence a variety.

We think of \(f\) as the multiplication of the \(n\)-quasigroup \((A,f,g_1,\dots,g_n)\) and we think of \(g_i\) as the \(i^{\text{th}}\)-coordinate division operation. That is, \[g_i(x_1,\dots,x_{i-1},x_{i+1},\dots,x_n,y)\] is taken to indicate the division of \(y\) simultaneously by \(x_j\) in the \(j^{\text{th}}\) coordinate for each \(j\neq i\). The impetus for this will be made clear by the following characterization of \(n\)-quasigroups.

\begin{defn}[\(n\)-quasigroup magma]
Given \(n\in\PP\) we say that an \(n\)-magma \(A\) is an \emph{\(n\)-quasigroup magma} when given
    \[
        x_1,\dots,x_{i-1},x_{i+1},\dots,x_n,y\in A
    \]
there exists a unique \(x_i\in A\) such that \(f(x_1,\dots,x_n)=y\).
\end{defn}

The \(n\)-quasigroups are in bijective correspondence with the \(n\)-quasigroup magmas, allowing us to freely switch between these two definitions when it suits us.

\begin{prop}
Given an \(n\)-quasigroup \((A,f,g_1,\dots,g_n)\) the \(n\)-magma reduct \((A,f)\) is an \(n\)-quasigroup magma. Conversely, given an \(n\)-quasigroup magma \((A,f)\) there exists a unique \(n\)-quasigroup expansion
    \[
        (A,f,g_1,\dots,g_n)
    \]
of \((A,f)\). In this expansion \(g_i\colon A^n\to A\) is defined so that
    \[
        g_i(x_1,\dots,x_{i-1},x_{i+1},\dots,x_n,y)
    \]
is the unique \(x_i\in A\) such that \(f(x_1,\dots,x_n)=y\).
\end{prop}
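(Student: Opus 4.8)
The plan is to prove both directions essentially by reading off the two quasigroup axioms, since each assertion is a direct consequence of the defining equations; I will spell out the arguments of \(f\) and \(g_i\) in full when applying an axiom.

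First I would establish that the reduct \((A,f)\) of an \(n\)-quasigroup is an \(n\)-quasigroup magma. Fix \(i\in[n]\) and elements \(x_1,\dots,x_{i-1},x_{i+1},\dots,x_n,y\in A\). For existence of a solution, set \(x_i=g_i(x_1,\dots,x_{i-1},x_{i+1},\dots,x_n,y)\); the first quasigroup axiom then gives \(f(x_1,\dots,x_n)=y\) directly. For uniqueness, suppose \(x_i'\) also satisfies \(f(x_1,\dots,x_{i-1},x_i',x_{i+1},\dots,x_n)=y\). Applying \(g_i\) and invoking the second quasigroup axiom yields
\[
    x_i'=g_i(x_1,\dots,x_{i-1},x_{i+1},\dots,x_n,f(x_1,\dots,x_{i-1},x_i',x_{i+1},\dots,x_n))=g_i(x_1,\dots,x_{i-1},x_{i+1},\dots,x_n,y)=x_i,
\]
so the solution is unique.

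Conversely, given an \(n\)-quasigroup magma \((A,f)\), I would define each \(g_i\colon A^n\to A\) by declaring \(g_i(x_1,\dots,x_{i-1},x_{i+1},\dots,x_n,y)\) to be the unique \(x_i\) with \(f(x_1,\dots,x_n)=y\); this is well defined precisely because \((A,f)\) is an \(n\)-quasigroup magma. Verifying the two quasigroup axioms is then immediate: the first holds because \(g_i(\dots,y)\) was chosen to be exactly the value producing output \(y\), and the second holds because, given \(x_1,\dots,x_n\), the element \(x_i\) is by the uniqueness clause the only value mapping to \(f(x_1,\dots,x_n)\), hence equals \(g_i(x_1,\dots,x_{i-1},x_{i+1},\dots,x_n,f(x_1,\dots,x_n))\). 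For uniqueness of the expansion, suppose \((A,f,g_1',\dots,g_n')\) is any \(n\)-quasigroup with the same reduct \((A,f)\). By the computation from the first direction (which used only the second axiom), every such \(g_i'\) must send \((x_1,\dots,x_{i-1},x_{i+1},\dots,x_n,y)\) to the unique solution of \(f(x_1,\dots,x_n)=y\); but that is the defining property of \(g_i\), so \(g_i'=g_i\) for each \(i\).

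The argument is entirely a matter of unwinding definitions, so I do not anticipate a genuine obstacle; the only point requiring care is the bookkeeping, namely ensuring that in the second axiom the final slot of \(g_i\) receives \(f(x_1,\dots,x_n)\) rather than \(y\), and keeping the omitted \(i^{\text{th}}\) coordinate consistent across the two axioms.
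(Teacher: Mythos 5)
Your proposal is correct and follows essentially the same route as the paper: both directions are handled by unwinding the two quasigroup identities, with uniqueness of the expansion forced by the second axiom. If anything, your version is slightly more explicit than the paper's in separating the existence and uniqueness halves of the first direction, but the substance is identical.
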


\begin{proof}
Suppose that \((A,f,g_1,\dots,g_n)\) is an \(n\)-quasigroup. We show that \((A,f)\) is an \(n\)-quasigroup magma. Suppose that \(f(x_1,\dots,x_n)=y\) for some \(x_1,\dots,x_n,y\in A\). It suffices to show that \(x_i=g_i(x_1,\dots,x_{i-1},x_{i+1},\dots,x_n,y)\) as this implies that \(x_i\) is uniquely determined by the \(x_j\) for \(j\neq i\) and \(y\). Observe that
    \begin{align*}
        x_i &= g_i(x_1,\dots,x_{i-1},x_{i+1},\dots,x_n,f(x_1,\dots,x_n)) \\
        &= g_i(x_1,\dots,x_{i-1},x_{i+1},\dots,x_n,y),
    \end{align*}
as claimed.

On the other hand, consider an \(n\)-quasigroup magma \((A,f)\). Suppose that
    \[
        (A,f,g_1,\dots,g_n)
    \]
is an \(n\)-quasigroup expansion of \(B\). If \(f(x_1,\dots,x_n)=y\) for some \(x_1,\dots,x_n,y\in A\) we must have that
    \[
        g_i(x_1,\dots,x_{i-1},x_{i+1},\dots,x_n,y)=x_i
    \]
by the \(n\)-quasigroup axioms. As there is a unique \(x_i\) for which \(f(x_1,\dots,x_n)=y\) the operation \(g_i\) is well-defined.

It remains to show that \(A\) with the \(g_i\) so defined does in fact satisfy all the \(n\)-quasigroup axioms. Given \(x_1,\dots,x_{i-1},x_{i+1},\dots,x_n,y\in A\) let \[x_i= g_i(x_1,\dots,x_{i-1},x_{i+1},\dots,x_n,y).\] By definition \(x_i\) is the unique element of \(A\) such that \(f(x_1,\dots,x_n)=y\). This implies that \[f(x_1,\dots,x_{i-1},g(x_1,\dots,x_{i-1},x_{i+1},\dots,x_n,y),x_{i+1},\dots,x_n)=y.\] Similarly, given \(x_1,\dots,x_n\in A\) let \(y= f(x_1,\dots,x_n)\). By definition \[g(x_1,\dots,x_{i-1},x_{i+1},\dots,x_n,y)\] is the unique member of \(A\) such that \[f(x_1,\dots,x_{i-1},g_i(x_1,\dots,x_{i-1},x_{i+1},\dots,x_n,y),x_{i+1},\dots,x_n)=y.\] Since we also have that \(f(x_1,\dots,x_n)=y\) it must be that \[g_i(x_1,\dots,x_{i-1},x_{i+1},\dots,x_n,y)=x_i\] and hence \[g_i(x_1,\dots,x_{i-1},x_{i+1},\dots,x_n,f(x_1,\dots,x_n))=x_i,\] as desired.
\end{proof}

We will speak of an \(n\)-quasigroup and its \(n\)-quasigroup magma reduct as though they are the same object. Traditionally a quasigroup is assumed to be binary, but we will default to our more expansive terminology and emphasize when we are talking about a binary quasigroup as opposed to an \(n\)-quasigroup for a general \(n\).

Since we can view \(n\)-quasigroups as magmas without losing any information we can take the variety of all \(n\)-quasigroups to be a subcategory of \(\magm_n\).

\begin{defn}[Category of quasigroups]
The \emph{category of \(n\)-quasigroups} \(\quasi_n\) is the full subcategory of \(\magm_n\) whose objects are \(n\)-quasigroups.
\end{defn}

\section{Alternating quasigroups}
\label{sec:alternating_quasigroups}
We are ready to name the class of quasigroups we will study.

\begin{defn}[Alternating quasigroup]
An \emph{alternating quasigroup} is a quasigroup which is also an alternating magma.
\end{defn}

Note that since the classes of alternating \(n\)-magmas and \(n\)-quasigroups are defined by universally quantified equations we have that the class of alternating \(n\)-quasigroups is equational and hence a variety of algebras. When \(n=2\) we have that \(\alt_2\) is trivial and hence the class of alternating binary quasigroups is the variety of binary quasigroups. Every group is thus an alternating binary quasigroup.

Since every variety of algebras forms a category whose morphisms are the usual homomorphisms of those algebras we have a category of alternating \(n\)-quasigroups for each \(n\in\PP\) whose structure doesn't depend on whether we consider our quasigroups as magmas or as algebras of signature \((n,\dots,n)\).

\begin{defn}[Category of alternating quasigroups]
The \emph{category of alternating \(n\)-quasigroups} \(\aq_n\) is the full subcategory of \(\magm_n\) (or of \(\quasi_n\)) whose objects are alternating \(n\)-quasigroups.
\end{defn}

Given any variety of algebras \(V\) (viewed as a category) and any set \(X\) there exists a free algebra \(\free_{V}(X)\), which is a member of \(V\), satisfying
    \[
        \hom_{V}(\free_{V}(X),A)\cong\hom_{\setcat}(X,A)
    \]
for any \(A\in V\). In particular, for any set \(X\) there exists a free alternating \(n\)-quasigroup \(\free_{\aq_n}(X)\) freely generated by \(X\). See section 4.3 of \cite{bergman} for more information.

Instead of looking at the whole category of alternating \(n\)-quasigroups, we restrict our attention to a subcategory whose morphism sets are restricted to those which preserve noncommutativity.

\begin{defn}[Commuting tuple]
Given \(A\in\aq_n\) we say that \(a\in A^n\) \emph{commutes} (or is a \emph{commuting tuple}) in \(A\) when we have for each \(\sigma\in\perm_n\) that
    \[
        f(a)=f(a_{\sigma(1)},\dots,a_{\sigma(n)}).
    \]
\end{defn}

\begin{defn}[Noncommuting tuple]
Given \(A\in\aq_n\) we say that \(a\in A^n\) \emph{does not commute} (or is a \emph{noncommuting tuple}) in \(A\) when \(a\) is not a commuting tuple in \(A\).
\end{defn}

The collection of all noncommuting tuples will be important for us.

\begin{defn}[Set of noncommuting tuples]
Given \(A\in\aq_n\) we define the \emph{noncommuting tuples} \(\nct(A)\) of \(A\) by
    \[
        \nct(A)=\set[a\in A^n]{a\text{ does not commute in }A}.
    \]
\end{defn}

The particular class of homomorphisms we will examine consists of those which preserve these sets of noncommuting tuples.

\begin{defn}[NC homomorphism]
We say that a homomorphism \(h\colon A_1\to A_2\) of alternating quasigroups is an \emph{NC homomorphism} (or a \emph{noncommuting homomorphism}) when for each \(a\in\nct(A_1)\) we have that
    \[
        h(a)=(h(a_1),\dots,h(a_n))\in\nct(A_2).
    \]
\end{defn}

Note that while embeddings are always NC homomorphisms the converse is not true. A typical NC homomorphism is not injective (or surjective, for that matter).

We can now define the category of quasigroups we will be using in our construction of manifolds.

\begin{defn}[Category of NC alternating quasigroups]
Given \(n\in\PP\) we define the \emph{category of  NC alternating \(n\)-quasigroups} to be the category \(\ncaq_n\) whose object class is \(\aq_n\) and whose morphisms are NC homomorphisms.
\end{defn}

We associate to each alternating \(n\)-quasigroup two sets of elements, which we will use when we produce pseudomanifolds from quasigroups.

\begin{defn}[Input elements]
Given a quasigroup \(A\) we define the \emph{input elements} of \(A\) to be
    \[
        \inp(A)=\set[x\in A]{(\exists a\in\nct(A))(\exists 1\le i\le n)(x=a_i)}.
    \]
\end{defn}

\begin{defn}[Output elements]
Given a quasigroup \(A\) we define the \emph{output elements} of \(A\) to be
    \[
        \out(A)=\set[f(a)]{a\in\nct(A)}.
    \]
\end{defn}

So far we have not investigated whether the quasigroups under consideration may be associative or not. Associative \(n\)-quasigroups are the \(n\)-ary groups introduced by Post \cite{post}. Certainly it is possible for a binary alternating quasigroup to be a group without being commutative, as any noncommutative group is an example. For higher arities it turns out that associativity implies commutativity.

\begin{prop}
Every alternating \(n\)-ary group for \(n\ge3\) is commutative.
\end{prop}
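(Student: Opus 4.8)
The plan is to show that \(\perm(f)\), which contains \(\alt_n\) by hypothesis, contains at least one \emph{odd} permutation. Since \([\perm_n:\alt_n]=2\), every subgroup of \(\perm_n\) containing \(\alt_n\) is either \(\alt_n\) or \(\perm_n\), and \(\alt_n\) together with any transposition generates \(\perm_n\); hence producing a single odd permutomorphism forces \(\perm(f)=\perm_n\), i.e. commutativity. Concretely I would aim to prove that the transposition \((1\,2)\) is a permutomorphism, that is, \(f(x_1,x_2,x_3,\dots,x_n)=f(x_2,x_1,x_3,\dots,x_n)\) for all \(x_i\in A\). This is exactly where associativity (the defining property of an \(n\)-ary group in the sense of Post \cite{post}) must interact with the \(\alt_n\)-symmetry: a lone odd swap cannot arise from even permutations, so the extra parity must be ``absorbed'' into a subproduct that is later neutralized using the quasigroup cancellation structure.

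The mechanism I would use is the composite \((2n-1)\)-ary term furnished by associativity. Writing \(F(x_1,\dots,x_{2n-1})\) for the common value of all bracketings, associativity gives in particular the two expressions \(F=f(f(x_1,\dots,x_n),x_{n+1},\dots,x_{2n-1})\) and \(F=f(x_1,\dots,x_{n-1},f(x_n,\dots,x_{2n-1}))\), which define the \emph{same} function. Because \(f\) is \(\alt_n\)-symmetric, permuting the arguments of the inner application in the first expression by an even permutation leaves \(F\) unchanged, so \(F\) is invariant under \(\alt_S\) for \(S=\{1,\dots,n\}\); the second expression shows \(F\) is invariant under \(\alt_T\) for \(T=\{n,\dots,2n-1\}\). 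These two alternating groups meet in the single point \(n\), and for \(n\ge 3\) they generate the full alternating group \(\alt_{2n-1}\) on \(S\cup T\). In particular the even permutation \((1\,2)(n\,\,n{+}1)\) is a symmetry of \(F\). Reading this symmetry through the second expression yields \(f(x_1,x_2,x_3,\dots,x_{n-1},f(x_n,x_{n+1},x_{n+2},\dots,x_{2n-1}))=f(x_2,x_1,x_3,\dots,x_{n-1},f(x_{n+1},x_n,x_{n+2},\dots,x_{2n-1}))\), which swaps \(x_1\) and \(x_2\) on the outside at the cost of swapping the first two entries of the inner block.

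To finish I would neutralize that inner swap and cancel. Setting \(x_n=x_{n+1}=c\) makes the two inner applications literally identical, equal to \(w:=f(c,c,x_{n+2},\dots,x_{2n-1})\); by the quasigroup property the map \(x_{2n-1}\mapsto w\) is a bijection of \(A\), so \(w\) ranges over all of \(A\) as \(x_{2n-1}\) varies while \(x_1,\dots,x_{n-1}\) stay arbitrary. Hence \(f(x_1,x_2,x_3,\dots,x_{n-1},w)=f(x_2,x_1,x_3,\dots,x_{n-1},w)\) for all choices, which is precisely \((1\,2)\in\perm(f)\). By the first paragraph this gives \(\perm(f)=\perm_n\).

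The step I expect to be the main obstacle is the purely group-theoretic claim that \(\langle\alt_S,\alt_T\rangle=\alt_{2n-1}\) when \(\abs{S\cap T}=1\) and \(\abs{S}=\abs{T}=n\ge 3\) --- equivalently, verifying that \((1\,2)(n\,\,n{+}1)\) really lies among the symmetries of \(F\) generated by the two block symmetries. This is standard (both groups are generated by \(3\)-cycles, so the generated group is transitive on the \(2n-1\) points and contains a \(3\)-cycle, whence it is all of \(\alt_{2n-1}\)), but it is the one place where content beyond bookkeeping enters, and it is exactly where \(n\ge 3\) is used. As a shorter but less self-contained alternative, one could invoke the Hosszú--Gluskin representation \(f(x_1,\dots,x_n)=x_1\,\theta(x_2)\,\theta^2(x_3)\cdots\theta^{n-1}(x_n)\,b\) of an \(n\)-ary group over a binary group \((A,\cdot)\) with \(\theta\in\aut(A,\cdot)\); imposing invariance under the single \(3\)-cycle \((1\,2\,3)\) and cancelling forces first \(\theta^2=\mathrm{id}\), then \(\theta=\mathrm{id}\), and finally commutativity of \((A,\cdot)\), so that \(f(x_1,\dots,x_n)=x_1x_2\cdots x_n\,b\) is visibly symmetric.
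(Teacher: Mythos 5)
Your proof is correct, and it reaches the conclusion by a genuinely different route from the paper's. Both arguments share the same three ingredients --- associativity to compare two bracketings of a composite product, \(\alt\)-invariance of an inner factor to realize an odd swap of outer arguments, and quasigroup cancellation to extract a single odd permutomorphism (which suffices since \(\alt_n\) is maximal in \(\perm_n\)) --- but they deploy them differently. The paper writes down one concrete four-term chain of equalities: it fills all but one slot of the inner product with a repeated element \(a_{n-1}\), so that the swap of \(a_{n-1}\) and \(a_n\) inside \(f(a_{n-1},a_n,a_{n-1},\dots,a_{n-1})\) is realized by a \(3\)-cycle (this is where \(n\ge3\) enters), shuttles the inner \(f\) between outer slots by associativity, and cancels in the first outer coordinate to conclude that the transposition \((n-1\;\,n)\) lies in \(\perm(f)\). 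You instead compute the symmetry group of the composite \((2n-1)\)-ary operation \(F\), show it contains \(\langle\alt_S,\alt_T\rangle=\alt_{2n-1}\), pick out the even permutation \((1\,2)(n\;\,n{+}1)\), and neutralize the inner swap by specializing \(x_n=x_{n+1}\); your collapse-by-equal-arguments plays exactly the role of the paper's repeated \(a_{n-1}\). Your route costs an extra group-theoretic lemma, and I would tighten its justification: ``transitive and contains a \(3\)-cycle'' does not by itself force the alternating group (there are imprimitive counterexamples), so you should either invoke primitivity and Jordan's theorem or, more cleanly, the fact that a group generated by \(3\)-cycles whose supports form a connected graph on \(S\cup T\) is all of \(\alt_{S\cup T}\). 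Note also that \((1\,2)(n\;\,n{+}1)\) is not expressible as a product of one element of \(\alt_S\) with one element of \(\alt_T\), so some such lemma is genuinely needed rather than a two-step computation. In exchange, your approach is more conceptual and identifies the full symmetry group of \(F\), which is more information than the paper extracts; the paper's computation is shorter and entirely self-contained.
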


\begin{proof}
Suppose that \(A\) is an alternating \(n\)-ary group where \(n\ge3\) and let \(a_1,\dots,a_{n+1}\in A\). Observe that
    \begin{align*}
        f(f(a_1,\dots,a_n),a_{n-1},\dots,a_{n-1},a_{n+1}) &= \\ f(a_1,a_2,\dots,a_{n-2},f(a_{n-1},a_n,a_{n-1},\dots,a_{n-1}),a_{n+1}) &= \\
        f(a_1,a_2,\dots,a_{n-2},f(a_n,a_{n-1},a_{n-1},\dots,a_{n-1}),a_{n+1}) &= \\
        f(f(a_1,a_2,\dots,a_{n-2},a_n,a_{n-1}),a_{n-1},\dots,a_{n-1},a_{n+1}),
    \end{align*}
which implies that
    \[
        f(a_1,\dots,a_n)=f(a_1,\dots,a_{n-2},a_n,a_{n-1}),
    \]
so \(A\) is in fact commutative.
\end{proof}

\subsection{Examples of alternating quasigroups}
\label{subsec:examples_alternating}
We give three classes of examples of alternating quasigroups:
    \begin{enumerate}
        \item A particular order \(5\) ternary alternating quasigroup.
        \item Alternating \(n\)-quasigroups which are given as alternating products of commutative ones.
        \item Any binary quasigroup, which includes all groups.
    \end{enumerate}

After the authors laboriously produced (1) they prevailed upon Jonathan Smith for other examples of alternating \(n\)-quasigroups. Although no one had, to Smith's knowledge, studied the varieties of alternating \(n\)-quasigroups for \(n>2\) before he nonetheless provided a special case of (2), which we generalized. Other varieties of \(n\)-quasigroups have been investigated before, as discussed in the introduction as well as in \autoref{sec:latin_cubes}. Since binary quasigroups, and in particular groups, are well-known we won't discuss (3) any more in this section.

We are especially interested in alternating quasigroups which are not commutative. Although commutative quasigroups are an input for the alternating product construction in (2), we will focus on examples of noncommutative alternating quasigroups since these will yield nontrivial manifolds and appear more difficult to find.

\begin{example}
\label{ex:order_five}
Take \(S=(\Z/5\Z)^3\) and define \(h\colon\Z/5\Z\times\alt_3\to\perm_S\) by
    \[
        (h(k,\sigma))(x_1,x_2,x_3)=(x_{\sigma(1)}+k,x_{\sigma(2)}+k,x_{\sigma(3)}+k).
    \]
There are \(7\) members of \(\orb(h)\). One system of orbit representatives is:
    \[
        \set{000,011,022,012,021,013,031}
    \]
where we follow the convention that \(xyz\) indicates \((x,y,z)\) for \(x,y,z\in\Z/5\Z\). Let \(A=\Z/5\Z\) and define a ternary operation \(f\colon A^3\to A\) so that
    \[
        f((h(k,\sigma))(x_1,x_2,x_3))=f(x_1,x_2,x_3)+k
    \]
and \(f\) is defined on the above set of orbit representatives as follows.
\begin{center}
    \begin{tabular}{r|c}
        \(xyz\) & \(f(x,y,z)\) \\ \hline
        \(000\) & \(0\) \\
        \(011\) & \(0\) \\
        \(022\) & \(0\) \\
        \(012\) & \(3\) \\
        \(021\) & \(4\) \\
        \(013\) & \(4\) \\
        \(031\) & \(2\)
    \end{tabular}
\end{center}
One may verify that the above values and the symmetry imposed on \(f\) completely define a ternary alternating quasigroup operation on \(A\).
\end{example}

Since ternary alternating quasigroups are an equational class the above example yields infinitely many others. For example, if we take \(A\) to be the ternary alternating quasigroup in \autoref{ex:order_five} then we have that any algebra of the form \(\prod_{j\in J}A_j\) where \(A_j= A\) for all \(j\in J\) where \(J\) is any set is a ternary alternating quasigroup.

Our next class of examples makes use of a product-like construction which we now define.

\begin{defn}[Alternating map]
Given sets \(A\) and \(B\) we say that a function \(\alpha\colon A^n\to B\) is an \emph{\(n\)-ary alternating map} from \(A\) to \(B\) when for each \(\sigma\in\alt_n\) and each \(a\in A^n\) we have that
    \[
        \alpha(a)=\alpha(a_{\sigma(1)},\dots,a_{\sigma(n)}).
    \]
\end{defn}

The prototypical example of an alternating map is the determinant map
    \[
        \det\colon(\F^n)^n\to\F
    \]
where \(\F\) is any field. This is an \(n\)-ary alternating map from \(\F^n\) to \(\F\).

\begin{defn}[Alternating product]
Given an \(n\)-ary commutative quasigroup \((U,g)\), an \((n+1)\)-ary commutative quasigroup \((V,h)\), and an \(n\)-ary alternating map \(\alpha\colon A^n\to B\) the \emph{alternating product} of \(U\) and \(V\) with alternating map \(\alpha\) is the alternating \(n\)-quasigroup
    \[
        U\boxtimes_\alpha V=(U\times V,g\boxtimes_\alpha h\colon(U\times V)^n\to U\times V)
    \]
where for \((u_1,v_1),\dots,(u_n,v_n)\in U\times V\) we define
    \[
        (g\boxtimes_\alpha h)((u_1,v_1),\dots,(u_n,v_n))=(g(u),h(\alpha(u),v_1,\dots,v_n))
    \]
where \(u=(u_1,\dots,u_n)\).
\end{defn}

\begin{example}
\label{ex:field}
Let \(\F\) be a field of odd characteristic or characteristic \(0\) and let \(n\in\PP\). Define \(U=(\F^n,g)\) where
    \[
        g(u_1,\dots,u_n)=\sum_{i=1}^nu_i,
    \]
when \(u_1,\dots,u_n\in\F_q^n\), take \(V=(\F,h)\) where
    \[
        h(v_1,\dots,v_{n+1})=\sum_{i=1}^{n+1}v_i
    \]
when \(v_1,\dots,v_{n+1}\in\F\), and define \(\alpha=\det\colon(\F^n)^n\to\F\). The alternating \(n\)-quasigroup \(U\boxtimes_\alpha V\), which we will also denote by \(\F^{(n)}\), is the example Jonathan Smith indicated.
\end{example}

We could have allowed \(\F\) to have characteristic \(2\) as well, but the resulting alternating quasigroup \(\F^{(n)}\) would again be commutative, which will not give us an interesting object when we proceed to construct manifolds from quasigroups, although of course such commutative quasigroups may be used as the inputs in another alternating product.

The smallest \(n\)-quasigroup of the form of \autoref{ex:field} we can produce has \(\F=\F_3\) and hence has order \(3^{n+1}\). This is still slightly too large for us to work with ``by hand'', so we consider instead a more contrived example of smaller order.

\begin{example}
\label{ex:order_six}
Define \(U=(\Z/3\Z,g)\) where
    \[
        g(u_1,u_2,u_3)= u_1+u_2+u_3
    \]
for \(u_1,u_2,u_3\in\Z/3\Z\) and define \(V=(\Z/2\Z,h)\) where
    \[
        h(v_1,v_2,v_3,v_4)= v_1+v_2+v_3+v_4
    \]
for \(v_1,v_2,v_3,v_4\in\Z/2\Z\). Define an alternating map \(\alpha\colon(\Z/3\Z)^3\to\Z/2\Z\) by
    \[
        \alpha(x)=
            \begin{cases}
                1 &\text{when } x\in\orb_{\alt_3}(0,2,1) \\
                0 &\text{otherwise}.
            \end{cases}
    \]
We then have a ternary alternating quasigroup \(U\boxtimes_\alpha V\) of order \(6\).
\end{example}

Observe that our \autoref{ex:order_five} cannot be given as an alternating product since the order of an alternating product of two nontrivial commutative quasigroups must be composite and if at least one of the input quasigroups \(U\) or \(V\) is trivial then the resulting quasigroup will be commutative.

\section{Open serenation}
\label{sec:open_serenation}
We are now ready to begin our construction of manifolds from quasigroups. We have already described the functor \(\ogeo_n\colon\pmfld_n\to\topsp\). It remains to define a functor \(\simpcmplx_n\colon\ncaq_n\to\pmfld_n\) and to give an appropriate smooth atlas to \((\ogeo_n\circ\simpcmplx_n)(A)\) for each \(A\in\aq_n\). These tasks are performed in the subsections on simplicization and open serenation, respectively.

\subsection{Simplicization}
From each alternating quasigroup of arity \(n\) we obtain a pseudomanifold of dimension \(n\) whose facets may be oriented so that facets have compatible orientations where they meet at \((n-1)\)-faces. As motivation for our construction, consider a set \(A\) equipped with a binary operation \(f\colon A^2\to A\). Given elements \(a,b\in A\) we can represent that \(f(a,b)=c\) with a corresponding triangle as pictured in \autoref{fig:multiplication_traingle}. In this figure, the inputs \(a\) and \(b\) are decorated as \(\underline{a}\) and \(\underline{b}\), respectively, to indicate such. Similarly, the output \(c\) is decorated as \(\overline{c}\). The orientation of this triangle is given by the arc inside the triangle, which serves to indicate that \(a\) is the left input and \(b\) is the right input.

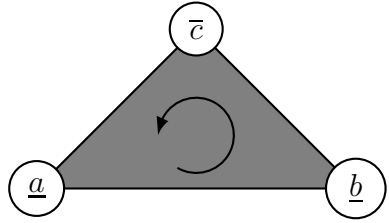
\begin{figure}
    \centering
    \begin{tikzpicture}[thick]
        \tikzstyle{every node}=[draw,circle,fill=white]
        \coordinate (a) at (5*360/8:3);
        \coordinate (b) at (7*360/8:3);
        \coordinate (c) at (0:0);
        \filldraw[fill=gray] (a) -- (b) -- (c) -- cycle;
        \draw[thick,-{Latex[bend]}] (barycentric cs:a=1,b=1,c=1)+(-120:0.5) arc(-120:180:0.5);
        \node at (a) {\(\underline{a}\)};
        \node at (b) {\(\underline{b}\)};
        \node at (c) {\(\overline{c}\)};
    \end{tikzpicture}
    \caption{Multiplication as a triangle}
    \label{fig:multiplication_traingle}
\end{figure}

If it also happens that \(d\in A\) with \(f(b,d)=c\) then we can continue our picture by adding another triangle as in \autoref{fig:another_triangle}. Note the compatible orientations of the two triangles in this figure.

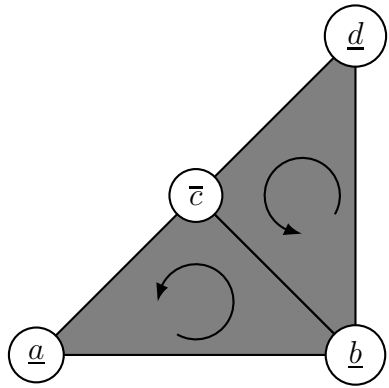
\begin{figure}
    \centering
    \begin{tikzpicture}[thick]
        \tikzstyle{every node}=[draw,circle,fill=white]
        \coordinate (a) at (5*360/8:3);
        \coordinate (b) at (7*360/8:3);
        \coordinate (c) at (0:0);
        \coordinate (d) at (1*360/8:3);
        \filldraw[fill=gray] (a) -- (b) -- (c) -- cycle;
        \draw[thick,-{Latex[bend]}] (barycentric cs:a=1,b=1,c=1)+(-120:0.5) arc(-120:180:0.5);
        \filldraw[fill=gray] (c) -- (b) -- (d) -- cycle;
        \draw[thick,-{Latex[bend]}] (barycentric cs:c=1,b=1,d=1)+(-30:0.5) arc(-30:270:0.5);
        \node at (a) {\(\underline{a}\)};
        \node at (b) {\(\underline{b}\)};
        \node at (c) {\(\overline{c}\)};
        \node at (d) {\(\underline{d}\)};
    \end{tikzpicture}
    \caption{Adding another triangle}
    \label{fig:another_triangle}
\end{figure}

We may continue in this fashion, building a simplicial complex whose vertices are \(\underline{x}\) and \(\overline{x}\) for \(x\in A\) and whose facets are of the form \(\set{\underline{x},\underline{y},\overline{f(x,y)}}\). See \autoref{fig:more_triangles} for a slightly larger example.

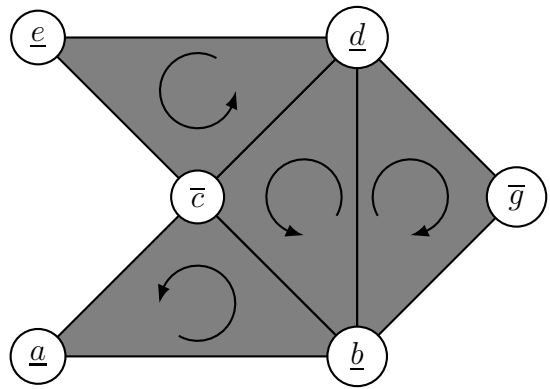
\begin{figure}
    \centering
    \begin{tikzpicture}[thick]
        \tikzstyle{every node}=[draw,circle,fill=white]
        \coordinate (a) at (5*360/8:3);
        \coordinate (b) at (7*360/8:3);
        \coordinate (c) at (0:0);
        \coordinate (d) at (1*360/8:3);
        \coordinate (e) at (3*360/8:3);
        \coordinate (g) at (0:4.24);
        \filldraw[fill=gray] (a) -- (b) -- (c) -- cycle;
        \draw[thick,-{Latex[bend]}] (barycentric cs:a=1,b=1,c=1)+(-120:0.5) arc(-120:180:0.5);
        \filldraw[fill=gray] (c) -- (b) -- (d) -- cycle;
        \draw[thick,-{Latex[bend]}] (barycentric cs:c=1,b=1,d=1)+(-30:0.5) arc(-30:270:0.5);
        \filldraw[fill=gray] (c) -- (e) -- (d) -- cycle;
        \draw[thick,-{Latex[bend]}] (barycentric cs:c=1,e=1,d=1)+(60:0.5) arc(60:360:0.5);
        \filldraw[fill=gray] (g) -- (b) -- (d) -- cycle;
        \draw[thick,-{Latex[bend]}] (barycentric cs:g=1,b=1,d=1)+(210:0.5) arc(210:-90:0.5);
        \node at (a) {\(\underline{a}\)};
        \node at (b) {\(\underline{b}\)};
        \node at (c) {\(\overline{c}\)};
        \node at (d) {\(\underline{d}\)};
        \node at (e) {\(\underline{e}\)};
        \node at (g) {\(\overline{g}\)};
    \end{tikzpicture}
    \caption{Adding more triangles}
    \label{fig:more_triangles}
\end{figure}

If it happens that \(f(a,b)=f(b,a)\) then we will have two triangles with the same vertices. Our solution is to only form triangles with vertex sets \(\set{\underline{a},\underline{b},\overline{f(a,b)}}\) when \(a\) and \(b\) do not commute under \(f\). In \autoref{fig:commuting_triangles} we see the undesirable case where
    \[
        f(a,b)=c=f(b,a)
    \]
Note the incompatible orientations along the edge joining \(\underline{a}\) and \(\underline{b}\).

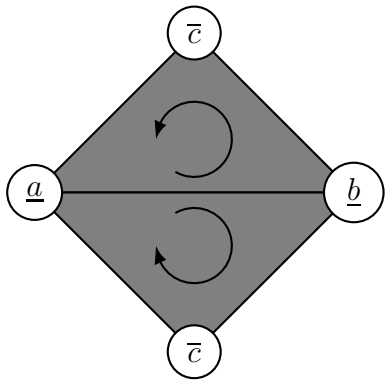
\begin{figure}
    \centering
    \begin{tikzpicture}[thick]
        \tikzstyle{every node}=[draw,circle,fill=white]
        \coordinate (a) at (5*360/8:3);
        \coordinate (b) at (7*360/8:3);
        \coordinate (c) at (0:0);
        \coordinate (c') at (3*360/4:4.24);
        \filldraw[fill=gray] (a) -- (b) -- (c) -- cycle;
        \draw[thick,-{Latex[bend]}] (barycentric cs:a=1,b=1,c=1)+(-120:0.5) arc(-120:180:0.5);
        \filldraw[fill=gray] (a) -- (b) -- (c') -- cycle;
        \draw[thick,-{Latex[bend]}] (barycentric cs:b=1,a=1,c'=1)+(120:0.5) arc(120:-180:0.5);
        \node at (a) {\(\underline{a}\)};
        \node at (b) {\(\underline{b}\)};
        \node at (c) {\(\overline{c}\)};
        \node at (c') {\(\overline{c}\)};
    \end{tikzpicture}
    \caption{Triangles where \(f(a,b)=f(b,a)\)}
    \label{fig:commuting_triangles}
\end{figure}

We can generalize this situation to the creation of a \(n\)-dimensional simplicial complex from an \(n\)-ary operation \(f\colon A^n\to A\). Suppose now that \(f\colon A^3\to A\) is a ternary operation. Given elements \(a,b,c,d\in A\) we can represent that \(f(a,b,c)=d\) with a corresponding oriented tetrahedron as in \autoref{fig:multiplication_tetrahedron}. The orientation in this case would be \((\underline{a},\underline{b},\underline{c},\overline{d})/\alt_4\).

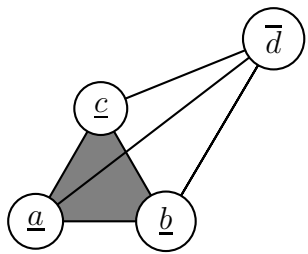
\begin{figure}
    \centering
    \begin{tikzpicture}[thick]
        \tikzstyle{every node}=[draw,circle,fill=white]
        \coordinate (a) at (210:1);
        \coordinate (b) at (330:1);
        \coordinate (c) at (90:1);
        \coordinate (d) at (2*180/9:3);
        \filldraw[fill=gray] (a) -- (b) -- (c) -- cycle;
        \draw (a) -- (d) -- (b) -- (d) -- (c);
        \node at (a) {\(\underline{a}\)};
        \node at (b) {\(\underline{b}\)};
        \node at (c) {\(\underline{c}\)};
        \node at (d) {\(\overline{d}\)};
    \end{tikzpicture}
    \caption{Ternary multiplication as a tetrahedron}
    \label{fig:multiplication_tetrahedron}
\end{figure}

We now have a different problem if we would like to obtain a pseudomanifold, which is that up to six tetrahedra could meet at the triangle with vertex set \(\set{\underline{a},\underline{b},\underline{c}}\). This situation is pictured in \autoref{fig:too_many_tetrahedra}.

\begin{figure}
    \centering
    \begin{tikzpicture}[thick]
        \tikzstyle{every node}=[draw,fill=white]
        \coordinate (a) at (210:1);
        \coordinate (b) at (330:1);
        \coordinate (c) at (90:1);
        \coordinate (abc) at (1*180/9:3);
        \coordinate (bca) at (2*180/9:3);
        \coordinate (cab) at (7*180/9:3);
        \coordinate (bac) at (8*180/9:3);
        \coordinate (acb) at (12.5*180/9:3);
        \coordinate (cba) at (14.5*180/9:3);
        \filldraw[fill=gray] (a) -- (b) -- (c) -- cycle;
        \draw (a) -- (abc) -- (b) -- (abc) -- (c);
        \draw (a) -- (bca) -- (b) -- (bca) -- (c);
        \draw (a) -- (cab) -- (b) -- (cab) -- (c);
        \draw (a) -- (bac) -- (b) -- (bac) -- (c);
        \draw (a) -- (acb) -- (b) -- (acb) -- (c);
        \draw (a) -- (cba) -- (b) -- (cba) -- (c);
        \node[circle] at (a) {\(\underline{a}\)};
        \node[circle] at (b) {\(\underline{b}\)};
        \node[circle] at (c) {\(\underline{c}\)};
        \node at (abc) {\(\overline{f(a,b,c)}\)};
        \node at (bca) {\(\overline{f(b,c,a)}\)};
        \node at (cab) {\(\overline{f(c,a,b)}\)};
        \node at (bac) {\(\overline{f(b,a,c)}\)};
        \node at (acb) {\(\overline{f(a,c,b)}\)};
        \node at (cba) {\(\overline{f(c,b,a)}\)};
    \end{tikzpicture}
    \caption{Too many tetrahedra meeting}
    \label{fig:too_many_tetrahedra}
\end{figure}
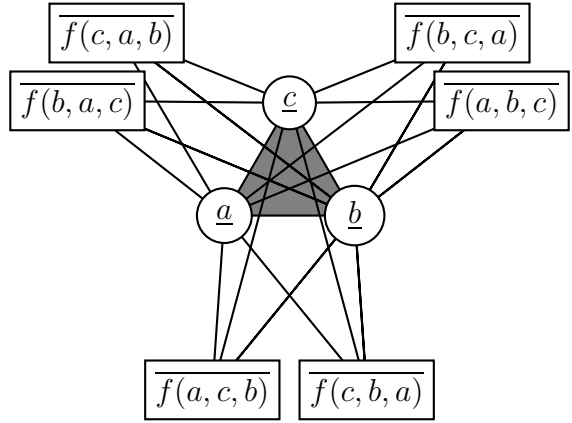

Our solution is to require that \(f\) is invariant under even permutations of its arguments. In this case, \(f(a,b,c)=f(b,c,a)=f(c,a,b)\) but in general \(f(a,b,c)\neq f(b,a,c)\). In \autoref{fig:alternating_tetrahedra} we see how this returns us to satisfying the pseudomanifold condition that a unique pair of facets meet at each \((n-1)\)-face.

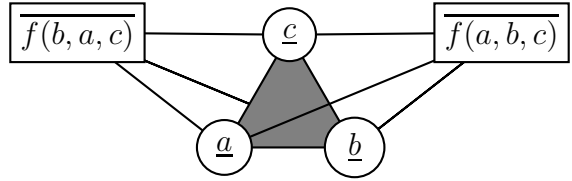
\begin{figure}
    \centering
    \begin{tikzpicture}[thick]
        \tikzstyle{every node}=[draw,fill=white]
        \coordinate (a) at (210:1);
        \coordinate (b) at (330:1);
        \coordinate (c) at (90:1);
        \coordinate (abc) at (1*180/9:3);
        \coordinate (bac) at (8*180/9:3);
        \draw (a) -- (bac) -- (b) -- (bac) -- (c);
        \filldraw[fill=gray] (a) -- (b) -- (c) -- cycle;
        \draw (a) -- (abc) -- (b) -- (abc) -- (c);
        \node[circle] at (a) {\(\underline{a}\)};
        \node[circle] at (b) {\(\underline{b}\)};
        \node[circle] at (c) {\(\underline{c}\)};
        \node at (abc) {\(\overline{f(a,b,c)}\)};
        \node at (bac) {\(\overline{f(b,a,c)}\)};
    \end{tikzpicture}
    \caption{Exactly two tetrahedra meeting}
    \label{fig:alternating_tetrahedra}
\end{figure}

\begin{defn}[Simplicization functor]
We define a \emph{simplicization functor}
    \[
        \simpcmplx_n\colon\ncaq_n\to\pmfld_n
    \]
as follows. Given \(A\in\aq_n\) we define
    \[
        \simpcmplx_n(A)=\bigcup_{\mathclap{a\in\nct(A)}}\pow\left(\set{\underline{a}_1,\dots,\underline{a}_n,\overline{f(a)}}\right)
    \]
where \(\underline{a}\) indicates an element \(a\in A\) we view as coming from \(\inp(A)\), \(\overline{a}\) indicates an element \(a\in A\) we view as coming from \(\out(A)\), and \(\underline{a}\neq\overline{a}\) so that
    \[
        \vertices(\simpcmplx_n(A))=\set[\underline{a}]{a\in\inp(A)}\cup\set[\overline{a}]{a\in\out(A)}
    \]
is a disjoint union of \(\inp(A)\) and \(\out(A)\). Given an NC homomorphism \(h\colon A_1\to A_2\) we define
    \[
        \simpcmplx_n(h)\colon\simpcmplx_n(A_1)\to\simpcmplx_n(A_2)
    \]
by
    \[
        \simpcmplx_n(h)(\underline{a})=\underline{h(a)}
    \]
and
    \[
        \simpcmplx_n(h)(\overline{a})=\overline{h(a)}.
    \]
\end{defn}

We will begin to drop the subscript \(n\) going forward unless we need to explicitly refer to the arity or dimension under consideration. We refer to \(\simpcmplx(A)\) as the \emph{simplicization} of \(A\) and use similar language for the simplicial maps \(\simpcmplx(h)\).

\subsection{Open serenation}
Finally we can describe our functor taking \(\ncaq_n\) to \(\smfld_n\). We give the relevant coordinate charts first. The domain for all of our coordinate charts is a particular bipyramid situated on the origin and standard basis points in \(\R^n\).

\begin{defn}[Bipyramid]
The \emph{standard open bipyramid} (or just \emph{bipyramid}) in \(\R^n\) is
    \[
        \bipyr_n=\ocvx\left(\set{(0,\dots,0),\left(\frac{2}{n},\dots,\frac{2}{n}\right)}\cup\set{e_1,\dots,e_n}\right)
    \]
where \(e_i\) is the \(i^{\text{th}}\) standard basis vector of \(\R^n\).
\end{defn}

The \(n=3\) case of a bipyramid is illustrated in \autoref{fig:bipyramid}.

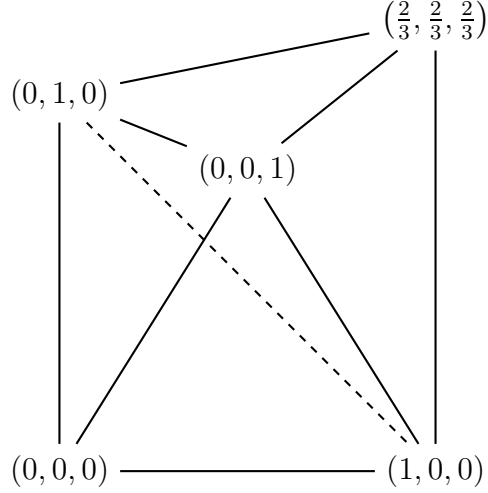
\begin{figure}
    \centering
    \begin{tikzpicture}[thick,scale=5]
        \node (a) at (0,0) {\((0,0,0)\)};
        \node (b) at (1,0) {\((1,0,0)\)};
        \node (c) at (0,1) {\((0,1,0)\)};
        \node (d) at (1/2,4/5) {\((0,0,1)\)};
        \node (e) at (1,6/5) {\(\left(\frac{2}{3},\frac{2}{3},\frac{2}{3}\right)\)};
        \draw (a) -- (b) -- (d) -- (a) -- (c) -- (e) -- (d) -- (c);
        \draw (b) -- (e);
        \draw[dashed] (c) -- (b);
    \end{tikzpicture}
    \caption{A bipyramid}
    \label{fig:bipyramid}
\end{figure}

We have two types of charts, but they're quite similar to each other. Given \(a\in\nct(A)\) let \(a'\in\nct(A)\) be a permutation of \(a\) obtained by swapping two entries.

\begin{defn}[Serene chart of input type]
Given an alternating \(n\)-quasigroup \(A\) and \(a=(a_1,\dots,a_n)\in\nct(A)\) the \emph{serene chart of input type} for \(a\) is
    \[
        \underline{\phi}_a\colon\bipyr_n\to\ogeo_n(\simpcmplx_n(A))
    \]
where we set
    \[
        \underline{\phi}_a(u_1,\dots,u_n)=\sum_{i=1}^nu_i\underline{a}_i+\left(1-\sum_{i=1}^nu_i\right)\overline{f(a)}
    \]
when \(\sum_{i=1}^nu_i\le1\) and
    \begin{align*}
        \underline{\phi}_a(u_1,\dots,u_n)=\frac{2}{n}\sum_{i=1}^n\left(1+\frac{n-2}{2}u_i-\sum_{j\neq i}u_j\right)\underline{a}_i+\left(-1+\sum_{i=1}^nu_i\right)\overline{f(a')}
    \end{align*}
when \(\sum_{i=1}^nu_i>1\).
\end{defn}

The image of the vertices of the bipyramid under the map \(\underline{\phi}_a\) for the case \(n=3\) is illustrated in \autoref{fig:chart_vertex_images}. Note that strictly speaking these vertices are not in the domain of \(\underline{\phi}_a\), but we can extend the formula defining \(\underline{\phi}_a\) to them in order to see where the boundaries of the image of \(\bipyr_3\) under this map lie. In \autoref{fig:chart_generic} the image of the \(n\)-dimensional bipyramid is depicted and labeled with the two cases of the formula for \(\underline{\phi}_a\). In this figure, the triangle with vertices \(\underline{a}_1\), \(\underline{a}_2\), and \(\overline{f(a)}\) represents the \(n\)-simplex which is the image of the origin and the standard basis vectors \(e_i\) while the triangle with vertices \(\underline{a}_1\), \(\underline{a}_2\), and \(\overline{f(a')}\) represents the \(n\)-simplex which is the image of the point \(\left(\frac{2}{n},\dots,\frac{2}{n}\right)\) and the standard basis vectors \(e_i\). The line joining \(\underline{a}_1\) and \(\underline{a}_2\) represents the \((n-1)\)-simplex determined by the standard basis vectors.

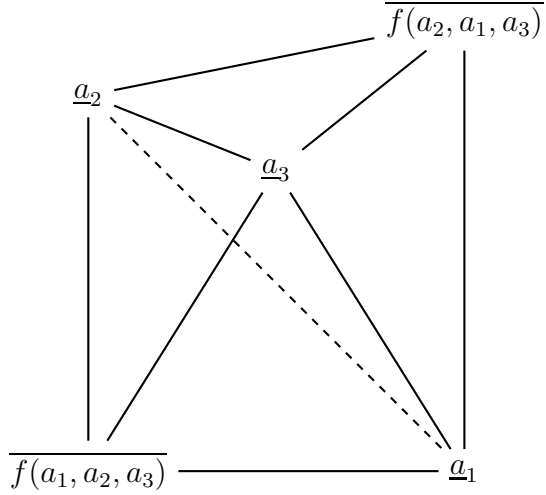
\begin{figure}
    \centering
    \begin{tikzpicture}[thick,scale=5]
        \node (a) at (0,0) {\(\overline{f(a_1,a_2,a_3)}\)};
        \node (b) at (1,0) {\(\underline{a}_1\)};
        \node (c) at (0,1) {\(\underline{a}_2\)};
        \node (d) at (1/2,4/5) {\(\underline{a}_3\)};
        \node (e) at (1,6/5) {\(\overline{f(a_2,a_1,a_3)}\)};
        \draw (a) -- (b) -- (d) -- (a) -- (c) -- (e) -- (d) -- (c);
        \draw (b) -- (e);
        \draw[dashed] (c) -- (b);
    \end{tikzpicture}
    \caption{Images of the vertices of \(\bipyr_3\) under \(\underline{\phi}_a\)}
    \label{fig:chart_vertex_images}
\end{figure}

\begin{figure}
    \centering
    \begin{tikzpicture}[thick,scale=0.8]
        \node (fa) at (0,0) {\(\overline{f(a)}\)};
        \node (a1) at (10,0) {\(\underline{a}_1\)};
        \node (a2) at (0,10) {\(\underline{a}_2\)};
        \node (fb) at (10,10) {\(\overline{f(a')}\)};
        \node at (3.7,2.3) {\(\sum_{i=1}^nu_i\underline{a}_i+\left(1-\sum_{i=1}^nu_i\right)\overline{f(a)}\)};
        \node at (6,8.2) {\(\frac{2}{n}\sum_{i=1}^n\left(1+\frac{n-2}{2}u_i-\sum_{j\neq i}u_j\right)\underline{a}_i+\)};
        \node at (7,7) {\(\left(-1+\sum_{i=1}^nu_i\right)\overline{f(a')}\)};
        \draw (fa) -- (a1) -- (fb) -- (a2) -- (fa);
        \draw (a1) -- (a2);
    \end{tikzpicture}
    \caption{Cases of \(\underline{\phi}_a\) for a generic \(\bipyr_n\)}
    \label{fig:chart_generic}
\end{figure}
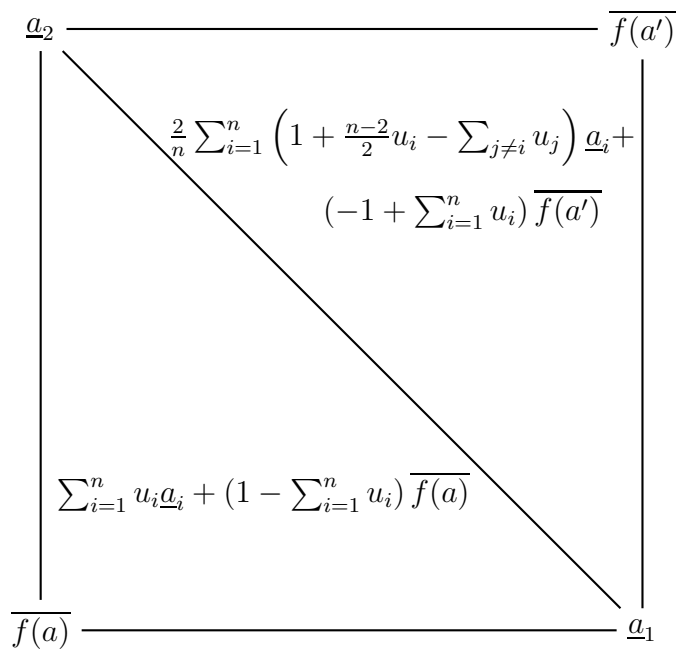

The reader may find that the formula for \(\underline{\phi}_a(u_1,\dots,u_n)\) when \(\sum_{i=1}^nu_i>1\) is not entirely obvious. We sketch its derivation, which is visualized in \autoref{fig:chart_calculation}.

\begin{figure}
    \centering
    \begin{tikzpicture}[thick,scale=0.8]
        \node (fa) at (0,0) {\(\overline{f(a)}\)};
        \node (a1) at (10,0) {\(\underline{a}_1\)};
        \node (a2) at (0,10) {\(\underline{a}_2\)};
        \node (fb) at (10,10) {\(\overline{f(a')}\)};
        \node (m) at (5,5) {\(\left(\frac{1}{n},\dots,\frac{1}{n}\right)\)};
        \node (w) at (4,6) {\(w\)};
        \node (v) at (2,4) {\(v\)};
        \node (u) at (6,8) {\(u\)};
        \draw (fa) -- (a1) -- (fb) -- (a2) -- (fa);
        \draw (a1) -- (m) -- (w) -- (a2);
        \draw[->] (fa) -- (m);
        \draw[->] (v) -- (w);
        \draw[->] (w) -- (u);
    \end{tikzpicture}
    \caption{Deriving the formula for a chart of input type}
    \label{fig:chart_calculation}
\end{figure}
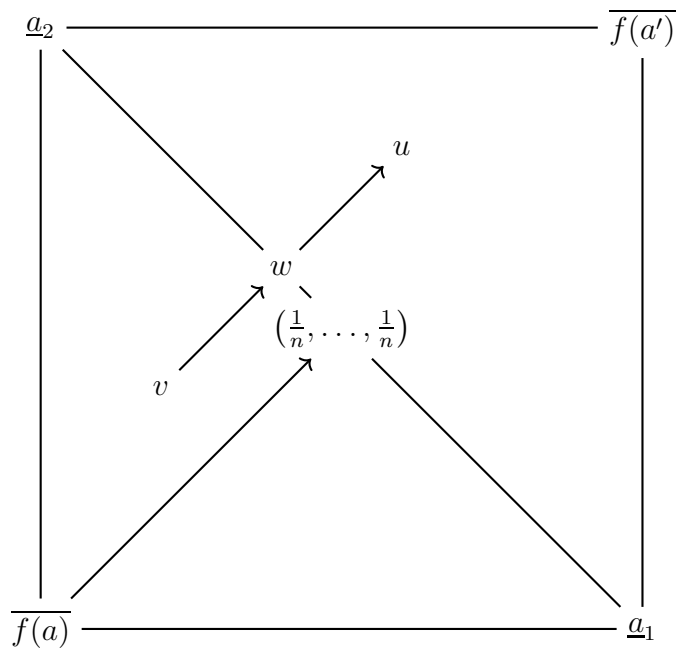

A point \(u\in\bipyr_n\) either has 
    \begin{enumerate}
        \item \(\sum_{i=1}^nu_i<1\), in which case it is mapped to a point in the open convex hull of the \(\underline{a}_i\) and \(\overline{f(a)}\),
        \item \(\sum_{i=1}^nu_i=1\), in which case it is mapped to a point in the open convex hull of the \(\underline{a}_i\), or
        \item \(\sum_{i=1}^nu_i>1\), in which case it is mapped to a point in the open convex hull of the \(\underline{a}_i\) and \(\overline{f(a')}\).
    \end{enumerate}
It is this last case which we need to consider carefully. Note that
    \[
        \underline{\phi}_a^{-1}\left(\ocvx\left(\set{\underline{a}_1,\dots,\underline{a}_n,\overline{f(a')}}\right)\right)
    \]
is the reflection of
    \[
        \underline{\phi}_a^{-1}\left(\ocvx\left(\set{\underline{a}_1,\dots,\underline{a}_n,\overline{f(a)}}\right)\right)
    \]
over the affine hyperplane through the point \(\left(\frac{1}{n},\dots,\frac{1}{n}\right)\) orthogonal to the vector \(\left(\frac{1}{n},\dots,\frac{1}{n}\right)\). A point \(u\in\bipyr_n\) with \(\sum_{i=1}^nu_i>1\) has a mirror image \(v\in\bipyr_n\) with \(\sum_{i=1}^nv_i<1\) where
    \[
        u=v+\gamma\left(\frac{1}{n},\dots,\frac{1}{n}\right)
    \]
for some \(\gamma\in\R\).
Since we set
    \[
        \underline{\phi}_a(v)=\sum_{i=1}^nv_i\underline{a}_i+\left(1-\sum_{i=1}^nv_i\right)\overline{f(a)}
    \]
we should analogously set
    \[
        \underline{\phi}_a(u)=\sum_{i=1}^nv_i\underline{a}_i+\left(1-\sum_{i=1}^nv_i\right)\overline{f(a')}.
    \]

We would like to find a formula for the \(v_i\) in terms of the \(u_i\). We accomplish this by finding the point \(w\in\ocvx(\set{e_1,\dots,e_n})\) which also lies on the line determined by \(u\) and \(v\). We can then write \(v=u-2(u-w)\). For any \(i\neq n\) we have that
    \[
        (u-w)\cdot(e_i-e_n)=0
    \]
so \(w_i-w_n=u_i-u_n\). We also know that \(\sum_{i=1}^nw_i=1\) so taking the \(u_i\) as constants we have a system of \(n\) linear equations in the \(n\) unknowns \(w_1,\dots,w_n\). Solving this system yields that
    \[
        w_i=\frac{1}{n}\left(1+(n-1)u_i-\sum_{j\neq i}u_j\right).
    \]
Since \(v=u-2(u-w)=2w-u\) we obtain
    \[
        v_i=\frac{2}{n}\left(1+(n-1)u_i-\sum_{j\neq i}u_j\right)-u_i=\frac{2}{n}\left(1+\frac{n-2}{2}u_i-\sum_{j\neq i}u_j\right).
    \]
The coefficient of \(\overline{f(a')}\) in \(\underline{\phi}_a(u)\) is then
    \begin{align*}
        1-\sum_{i=1}^nv_i &= 1-\frac{2}{n}\sum_{i=1}^n\left(1+\frac{n-2}{2}u_i-\sum_{j\neq i}u_j\right) \\
        &= -1+\sum_{i=1}^nu_i.
    \end{align*}

We produce another family of charts. Given \(a\in\nct(A)\) let \(a_{n+1}\in A\) be the unique solution to \(f(a)=f(b)\) where
    \[
        b_i=
        \begin{cases}
            a_i &\text{when } 1\le i\le n-2 \\
            a_{n+1} &\text{when } i=n-1 \\
            a_{n-1} &\text{when } i=n
        \end{cases}
        .
    \]

\begin{defn}[Serene chart of output type]
Given an alternating \(n\)-quasigroup \(A\) and \(a=(a_1,\dots,a_n)\in\nct(A)\) the \emph{serene chart of output type} for \(a\) is
    \[
        \overline{\phi}_a\colon\bipyr_n\to\ogeo_n(\simpcmplx_n(A))
    \]
where we set
    \[
        \overline{\phi}_a(u_1,\dots,u_n)=\sum_{i=1}^{n-1}u_i\underline{a}_i+u_n\overline{f(a)}+\left(1-\sum_{i=1}^nu_i\right)\underline{a}_n
    \]
when \(\sum_{i=1}^nu_i\le1\) and
    \begin{align*}
        \overline{\phi}_a(u_1,\dots,u_n)=\frac{2}{n}\sum_{i=1}^{n-1}\left(1+\frac{n-2}{2}u_i-\sum_{j\neq i}u_j\right)\underline{a}_i\\+\frac{2}{n}\left(1+\frac{n-2}{2}u_n-\sum_{j=1}^{n-1}u_j\right)\overline{f(a)}\\+\left(-1+\sum_{i=1}^nu_i\right)\underline{a}_{n+1}
    \end{align*}
when \(\sum_{i=1}^nu_i>1\).
\end{defn}

\begin{defn}[Open serenation functor]
We define an \emph{open serenation functor}
    \[
        \oser_n\colon\ncaq_n\to\smfld_n
    \]
as follows. Given \(A\in\aq_n\) we define
    \[
        \oser_n(A)=(\ogeo_n\circ\simpcmplx_n)(A)
    \]
where the smooth the atlas of \(\oser_n(A)\) is
    \[
        \bigcup_{\mathclap{a\in\nct(A)}}\set{\underline{\phi}_a,\overline{\phi}_a}.
    \]
Given an NC homomorphism \(h\colon A\to B\) where \(A,B\in\aq_n\) we define
    \[
        \oser_n(h)=(\ogeo_n\circ\simpcmplx_n)(h).
    \]
\end{defn}

As in the case of the simplicization functor we'll generally omit the dimension subscript unless it's germane to the discussion at hand.

The tangent spaces of \(\oser(A)\) can be described directly in terms of the quasigroup \(A\). Given a point \(x\) in a manifold we denote by \(T_x\) the set of tangent vectors at \(x\) and by \(T_x\) the tangent space at \(x\).

\begin{prop}
Given a quasigroup \(A\), \(a\in\nct(A)\), and \(x\in\oser(A)\) with
    \[
        x\in\ocvx\left(\set{\underline{a}_1,\dots,\underline{a}_n,\overline{f(a)}}\right)
    \]
we have that
    \[
        T_x=\spn\left(\set[\underline{a}_i-\overline{f(a)}]{i\in[n]}\right).
    \]
\end{prop}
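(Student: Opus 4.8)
The plan is to read the tangent space directly off the parametrization \(\underline{\phi}_a\), exploiting that this chart is \emph{affine} on the part of \(\bipyr_n\) that covers \(x\). Since \(x\in\ocvx(\set{\underline{a}_1,\dots,\underline{a}_n,\overline{f(a)}})\), the point \(x\) lies in the relative interior of the closed \(n\)-simplex on these \(n+1\) vertices, so it is a convex combination of them in which every barycentric coordinate is strictly positive. Rewriting the first case of \(\underline{\phi}_a\) as
\[
    \underline{\phi}_a(u)=\overline{f(a)}+\sum_{i=1}^n u_i\left(\underline{a}_i-\overline{f(a)}\right),
\]
I see that the coordinates \(u_i\) are exactly the barycentric coordinates of the image point along the \(\underline{a}_i\). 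Positivity of all barycentric coordinates of \(x\) therefore forces \(u_i>0\) for each \(i\) and \(\sum_{i=1}^n u_i<1\), placing the unique preimage \(u=\underline{\phi}_a^{-1}(x)\) in the open lower pyramid \(\set[v\in\R^n]{v_i>0,\ \sum_i v_i<1}\subseteq\bipyr_n\), on which \(\underline{\phi}_a\) is given by the affine formula above.

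Next I would differentiate. On this region \(\underline{\phi}_a\) is the restriction of an affine map whose linear part sends \(e_i\mapsto\underline{a}_i-\overline{f(a)}\), so its differential at \(u\) is the constant linear map
\[
    D\underline{\phi}_a(u)(t_1,\dots,t_n)=\sum_{i=1}^n t_i\left(\underline{a}_i-\overline{f(a)}\right).
\]
By the definition of the smooth structure on \(\oser(A)\) the tangent space \(T_x\) is the image of \(D\underline{\phi}_a(u)\) — equivalently, the set of velocities \(\gamma'(0)\) of smooth curves \(\gamma\) in \(\oser(A)\) through \(x\), each of which factors as \(\underline{\phi}_a\circ c\) for small time and hence has velocity in this image. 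That image is visibly \(\spn\left(\set[\underline{a}_i-\overline{f(a)}]{i\in[n]}\right)\), which is the assertion.

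It remains to check that this span has the correct dimension and that no other cell of the realization contributes extra directions. Because \(\set{\underline{a}_1,\dots,\underline{a}_n,\overline{f(a)}}\) is a genuine \(n\)-face of \(\simpcmplx(A)\), its vertices are distinct standard basis vectors of \(\R^{\vertices(\simpcmplx(A))}\) and hence affinely independent; the \(n\) difference vectors are then linearly independent, the span is \(n\)-dimensional, and \(\underline{\phi}_a\) is an immersion near \(u\). The only point I expect to require care is ruling out tangent directions coming from neighbouring simplices: here I would invoke that \(x\) lies in the relative interior of \(\ocvx(\set{\underline{a}_1,\dots,\underline{a}_n,\overline{f(a)}})\) together with the standard fact that distinct faces of a geometric realization meet only in their common subfaces, so a relatively open neighbourhood of \(x\) in \(\oser(A)\) sits inside this single open \(n\)-simplex. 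Consequently \(\oser(A)\) agrees near \(x\) with an open subset of the affine hull of the facet, whose direction space is once more \(\spn\left(\set[\underline{a}_i-\overline{f(a)}]{i\in[n]}\right)\); this both confirms the computation and shows it is independent of the chosen chart.
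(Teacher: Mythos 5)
Your proposal is correct and follows essentially the same route as the paper: both arguments locate \(\underline{\phi}_a^{-1}(x)\) in the region \(\sum_{i=1}^n u_i<1\) where the chart is given by the affine formula \(\underline{\phi}_a(u)=\sum_{i=1}^nu_i\underline{a}_i+\left(1-\sum_{i=1}^nu_i\right)\overline{f(a)}\), and then read off the partial derivatives \(\partial\underline{\phi}_a/\partial u_i=\underline{a}_i-\overline{f(a)}\) as a spanning set for \(T_x\). Your additional checks (that the barycentric coordinates force the preimage into the open lower pyramid, that the difference vectors are linearly independent, and that neighbouring simplices contribute no further directions) are sound elaborations of points the paper leaves implicit.
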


\begin{proof}
We obtain the same tangent space with any chart so take \(a=(a_1,\dots,a_n)\in\nct(A)\) and consider that by our assumption
    \[
        \underline{\phi}_a^{-1}(x)=(x_1,\dots,x_n)
    \]
where \(\sum_{i=1}^nx_i<1\). It follows that near \(\underline{\phi}_a^{-1}(x)\) the map \(\underline{\phi}_a\) is given by
    \[
        \underline{\phi}_a(u_1,\dots,u_n)=\sum_{i=1}^nu_i\underline{a}_i+\left(1-\sum_{i=1}^nu_i\right)\overline{f(a)}.
    \]
Note that \(\underline{\phi}_a\) is a smooth map from \(\R^n\) to \(\R^{\vertices(\simpcmplx_n(A))}\) near \(\underline{\phi}_a^{-1}(x)\). For each \(i\in[n]\) we have a tangent vector of the form
    \[
        \pdif{}{u_i}\underline{\phi}_a=\underline{a}_i-\overline{f(a)},
    \]
as claimed.
\end{proof}

The points addressed by the previous proposition cover almost all of \(\oser(A)\). We have a similar description of the tangent space to a point on one of the remaining ``creases'', of which there are two types: those which are the open convex hull of an \((n-1)\)-face containing an output vertex and those which are the open convex hull of an \((n-1)\)-face containing only input vertices.

\begin{prop}
Given a quasigroup \(A\), \(a\in\nct(A)\), and \(x\in\oser(A)\) with
    \[
        x\in\ocvx\left(\set{\underline{a}_1,\dots,\underline{a}_{n-1},\overline{f(a)}}\right)
    \]
we have that
    \[
        T_x=\spn\left(\set{\underline{a}_1,\dots,\underline{a}_{n-1},\overline{f(a)}}\right).
    \]
\end{prop}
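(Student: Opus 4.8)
The plan is to proceed exactly as in the preceding proposition, computing \(T_x\) from a chart whose image contains \(x\), except that the relevant chart is now a serene chart of output type rather than of input type. First I would identify the two facets meeting along the crease on which \(x\) lies. The set \(\set{\underline{a}_1,\dots,\underline{a}_{n-1},\overline{f(a)}}\) is an \((n-1)\)-face of \(\simpcmplx(A)\), so by the pseudomanifold condition it is contained in exactly two facets: the facet \(\set{\underline{a}_1,\dots,\underline{a}_n,\overline{f(a)}}\) arising from \(a\), and the facet \(\set{\underline{a}_1,\dots,\underline{a}_{n-1},\underline{a}_{n+1},\overline{f(a)}}\), where \(a_{n+1}\) is the element defined just before the serene chart of output type (so that \(f(a_1,\dots,a_{n-2},a_{n+1},a_{n-1})=f(a)\) and the two facets do share the output vertex \(\overline{f(a)}\)). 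Consequently \(\overline{\phi}_a\) covers \(x\), and \(\overline{\phi}_a^{-1}(x)\) lies on the separating hyperplane \(\sum_{i=1}^n u_i=1\), which \(\overline{\phi}_a\) sends onto \(\ocvx(\set{\underline{a}_1,\dots,\underline{a}_{n-1},\overline{f(a)}})\).

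Next I would isolate the tangent directions that are unambiguous. Restricted to the crease \(\sum_{i=1}^n u_i=1\), the map \(\overline{\phi}_a\) is the affine isomorphism \((u_1,\dots,u_{n-1})\mapsto\sum_{i=1}^{n-1}u_i\underline{a}_i+\bigl(1-\sum_{i=1}^{n-1}u_i\bigr)\overline{f(a)}\) onto the open crease simplex, whose differential has image \(\spn(\set[\underline{a}_i-\overline{f(a)}]{i\in[n-1]})\). These \(n-1\) in-crease directions agree whether computed from the branch \(\sum u_i<1\) or from the branch \(\sum u_i>1\), since both branches restrict to the same affine map on the hyperplane, and they account for all but one dimension of \(T_x\). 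Because adjoining the single vector \(\overline{f(a)}\) to this span already produces \(\spn(\set{\underline{a}_1,\dots,\underline{a}_{n-1},\overline{f(a)}})\), it remains only to produce one transverse tangent direction and to show that its realization in \(\R^{\vertices}\) completes the in-crease span to the claimed subspace.

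The hard part is exactly this transverse direction. In the interior case of the previous proposition the chart is a smooth immersion into \(\R^{\vertices}\), so \(T_x\) is simply the image of the chart differential; at a crease this fails, because the two branches of \(\overline{\phi}_a\) have genuinely different differentials — each returns the direction space of its own facet — so \(\overline{\phi}_a\) is not \(C^1\) into \(\R^{\vertices}\) across \(\sum u_i=1\). Hence the transverse tangent vector is not the image of a one-sided partial derivative, and determining its correct realization in \(\R^{\vertices}\) is the crux of the argument. I would attack this using the reflection \(v=2w-u\) built into the definition of \(\overline{\phi}_a\) together with the intrinsic flatness of the serene metric: the two one-sided transverse velocities are related by this reflection, and I expect the reflection-symmetric (hence metrically canonical) transverse direction to be the radial generator \(\overline{f(a)}\), equivalently \(x\) itself, rather than either facet's inward direction. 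Showing that this reflection-compatible transverse direction, adjoined to \(\spn(\set[\underline{a}_i-\overline{f(a)}]{i\in[n-1]})\), yields precisely \(\spn(\set{\underline{a}_1,\dots,\underline{a}_{n-1},\overline{f(a)}})\) — and in particular justifying the radial realization against the ambient bend of the two facets in \(\R^{\vertices}\) — is the step I anticipate will demand the most care.
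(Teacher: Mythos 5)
Your setup is correct and in fact cleaner than the paper's on one point: the crease \(\ocvx\left(\set{\underline{a}_1,\dots,\underline{a}_{n-1},\overline{f(a)}}\right)\) is covered by the chart of \emph{output} type \(\overline{\phi}_a\) along \(\sum_{i=1}^nu_i=1\) (the paper's proof literally writes \(\underline{\phi}_a\) here and swaps the chart names with the following proposition, but its identification of \(e_k\) with \(\underline{a}_k\) for \(k<n\) and \(\overline{f(a)}\) for \(k=n\) is exactly the output-chart labeling you use). Your identification of the two facets meeting along the crease, your verification that both branches of \(\overline{\phi}_a\) restrict to the same affine map on the hyperplane, and your computation of the \(n-1\) in-crease directions \(\spn\left(\set[\underline{a}_i-\overline{f(a)}]{i\in[n-1]}\right)\) are all correct and consistent with what the paper does.

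The gap is that your self-identified crux --- producing the transverse tangent direction and justifying its realization as \(\overline{f(a)}\) via the reflection \(v=2w-u\) and the flat metric --- is announced but not executed, so as written the argument does not close. You should also be aware that the paper does not carry out anything like that program: its entire proof is to take the coordinate curve \(\gamma(t)=\phi_a(x+te_k)\), observe that \(\left.\diff{(\phi_a^{-1}\circ\gamma)}{t}\right|_{t=0}=e_k\), and then \emph{declare} the identification of \(e_k\) with the vertex labeling the \(k\)th barycentric coordinate to be natural. In other words, the proposition is naming a basis of the intrinsic tangent space (whose smooth structure is defined by the serene charts), not computing the image of an ambient derivative in \(\R^{\vertices(\simpcmplx_n(A))}\); your correct observation that \(\overline{\phi}_a\) fails to be \(C^1\) into the ambient space across the crease explains precisely why no ambient computation could work, and why the paper retreats to a labeling convention. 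So the idea you are missing is not a hidden lemma but the realization that the statement is to be read intrinsically: once you adopt the paper's convention \(e_k\mapsto\underline{a}_k\) (\(k<n\)), \(e_n\mapsto\overline{f(a)}\), your first two paragraphs already constitute the whole proof, and the metric/reflection argument you defer is unnecessary (and, for the stronger ambient reading you seem to be after, would still need to be supplied).
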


\begin{proof}
Fix some \(k\in[n]\) and let \(\epsilon>0\) be sufficiently small that
    \[
        x+te_k\in\bipyr_n
    \]
for all \(t\in(-\epsilon,\epsilon)\). Let \(\gamma\colon(-\epsilon,\epsilon)\to\oser_n(A)\) be given by
    \[
        \gamma(t)=\underline{\phi}_a(x+te_k).
    \]
We have a corresponding tangent vector
    \[
        \left.\diff{(\underline{\phi}_a^{-1}\circ\gamma)}{t}\right|_{t=0}=e_k,
    \]
which we naturally identify with \(\underline{a}_k\) when \(k<n\) and with \(\overline{f(a)}\) when \(k=n\).
\end{proof}

\begin{prop}
Given a quasigroup \(A\), \(a\in\nct(A)\), and \(x\in\oser(A)\) with
    \[
        x\in\ocvx\left(\set{\underline{a}_1,\dots,\underline{a}_n}\right)
    \]
we have that
    \[
        T_x=\spn\left(\set{\underline{a}_1,\dots,\underline{a}_n}\right).
    \]
\end{prop}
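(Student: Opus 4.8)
The plan is to mimic the proof of the preceding proposition, this time using the input-type chart \(\underline{\phi}_a\), for which the all-input face \(\ocvx(\set{\underline{a}_1,\dots,\underline{a}_n})\) is realized as the equatorial section \(\sum_{i=1}^nu_i=1\) of the bipyramid. Indeed, on this section the coefficient \(1-\sum_{i=1}^nu_i\) of \(\overline{f(a)}\) in the first case of the formula defining \(\underline{\phi}_a\) vanishes, so \(\underline{\phi}_a\) restricts to the affine isomorphism \(\underline{\phi}_a(u_1,\dots,u_n)=\sum_{i=1}^nu_i\underline{a}_i\) carrying the relatively open simplex \(\set[(u_1,\dots,u_n)]{\sum_{i=1}^nu_i=1,\ u_i>0}\) onto \(\ocvx(\set{\underline{a}_1,\dots,\underline{a}_n})\) with \(e_i\mapsto\underline{a}_i\). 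In particular \(\underline{\phi}_a^{-1}(x)=(x_1,\dots,x_n)\) satisfies \(\sum_{i=1}^nx_i=1\), and since each \(x_i>0\) it lies in the open set \(\bipyr_n\).

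Because \(\bipyr_n\) is open, for each \(k\in[n]\) we may choose \(\epsilon>0\) so small that \(\underline{\phi}_a^{-1}(x)+te_k\in\bipyr_n\) for all \(t\in(-\epsilon,\epsilon)\), and we set
\[
\gamma_k(t)=\underline{\phi}_a\left(\underline{\phi}_a^{-1}(x)+te_k\right).
\]
Exactly as in the preceding proposition, \(\gamma_k\) is a smooth curve in \(\oser_n(A)\) through \(x\) whose velocity in chart coordinates is
\[
\left.\diff{(\underline{\phi}_a^{-1}\circ\gamma_k)}{t}\right|_{t=0}=e_k,
\]
which we identify with \(\underline{a}_k\) by means of the correspondence \(e_i\mapsto\underline{a}_i\) recorded above. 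As \(e_1,\dots,e_n\) is a basis of the chart tangent space, the curves \(\gamma_1,\dots,\gamma_n\) together exhibit \(T_x=\spn(\set{\underline{a}_1,\dots,\underline{a}_n})\).

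The one genuinely new feature, which I expect to be the main obstacle, is that unlike in the interior case each line \(t\mapsto\underline{\phi}_a^{-1}(x)+te_k\) crosses the equator \(\sum_{i=1}^nu_i=1\): the curve \(\gamma_k\) passes from the facet carrying \(\overline{f(a)}\) into the facet carrying \(\overline{f(a')}\). Thus the argument depends on \(\underline{\phi}_a\) being a bona fide smooth chart across this crease, which is precisely the property the bipyramidal coordinates were constructed to supply and which is secured when the atlas \(\bigcup_{a\in\nct(A)}\set{\underline{\phi}_a,\overline{\phi}_a}\) is verified to be smooth. It is worth flagging that the two one-sided derivatives of \(\underline{\phi}_a\), regarded as a map into \(\R^{\vertices(\simpcmplx_n(A))}\), disagree along the equator --- the realization literally bends there, the lower side giving \(\underline{a}_k-\overline{f(a)}\) and the upper side giving \(\underline{a}_k-\tfrac{2}{n}\sum_{i=1}^n\underline{a}_i+\overline{f(a')}\) --- so the ambient pushforward is not the right object and the tangent space must be read off intrinsically through the chart, just as in the identification of \(e_k\) with a vertex in the preceding proposition.
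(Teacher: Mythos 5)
Your proof is correct and follows essentially the same coordinate-curve argument as the paper: differentiate \(t\mapsto\underline{\phi}_a(\underline{\phi}_a^{-1}(x)+te_k)\) in the chart and identify \(e_k\) with \(\underline{a}_k\). The only discrepancy is the choice of chart --- the paper's one-line proof says to replace \(\underline{\phi}_a\) by \(\overline{\phi}_a\), but since it is the equator \(\sum_{i=1}^nu_i=1\) of the \emph{input}-type chart that maps onto \(\ocvx\left(\set{\underline{a}_1,\dots,\underline{a}_n}\right)\) (the output-type equator lands in \(\ocvx\left(\set{\underline{a}_1,\dots,\underline{a}_{n-1},\overline{f(a)}}\right)\) instead), your use of \(\underline{\phi}_a\) is the geometrically correct one and the paper's substitution instruction appears to be a typo propagated from the preceding proof.
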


\begin{proof}
The argument here is identical to that for the preceding proposition with the label \(\overline{f(a)}\) replaced with \(\underline{a}_n\) and the chart \(\underline{\phi}_a\) is replaced with \(\overline{\phi}_a\).
\end{proof}

\subsection{Examples of open serenation}
\label{subsec:examples_open_serenation}
We give some small examples of open serenation. Our first illustrates the distinction between open serenation and the construction for groups described by Herman and Pakianathan \cite{herman}.

\begin{example}
\label{ex:quaternion_open}
Let \(G=\set{\pm1,\pm i,\pm j,\pm k}\) be the quaternion group of order \(8\). We have that
    \[
        \nct(G)=\set[(\pm u,\pm v)]{\set{u,v}\in\binom{\set{i,j,k}}{2}}
    \]
so
    \[
        \inp(G)=\set{\pm i,\pm j,\pm k}
    \]
and
    \[
        \out(G)=\set{\pm i,\pm j,\pm k}.
    \]
We see that
    \[
        \vertices(\simpcmplx(G))=\set[\underline{\pm u}]{u\in\set{i,j,k}}\cup\set[\overline{\pm u}]{u\in\set{i,j,k}}.
    \]
\autoref{fig:quaternion_facets} gives the facet of \(\simpcmplx(G)\) associated with each \((a,b)\in\nct(G)\).

\begin{table}
    \centering
    \begin{tabular}{c|c}
            \((a,b)\in\nct(G)\) & \(\sigma\in\simpcmplx(G)\) \\ \hline
            \((i,j)\) & \(\set{\underline{i},\underline{j},\overline{k}}\) \\
            \((i,-j)\) & \(\set{\underline{i},\underline{-j},\overline{-k}}\) \\
            \((-i,j)\) & \(\set{\underline{-i},\underline{j},\overline{-k}}\) \\
            \((-i,-j)\) & \(\set{\underline{-i},\underline{-j},\overline{k}}\) \\
            \((i,k)\) & \(\set{\underline{i},\underline{k},\overline{-j}}\) \\
            \((i,-k)\) & \(\set{\underline{i},\underline{-k},\overline{j}}\) \\
            \((-i,k)\) & \(\set{\underline{-i},\underline{k},\overline{j}}\) \\
            \((-i,-k)\) & \(\set{\underline{-i},\underline{-k},\overline{-j}}\) \\
            \((j,i)\) & \(\set{\underline{j},\underline{i},\overline{-k}}\) \\
            \((j,-i)\) & \(\set{\underline{j},\underline{-i},\overline{k}}\) \\
            \((-j,i)\) & \(\set{\underline{-j},\underline{i},\overline{k}}\) \\
            \((-j,-i)\) & \(\set{\underline{-j},\underline{-i},\overline{-k}}\) \\
            \((j,k)\) & \(\set{\underline{j},\underline{k},\overline{i}}\) \\
            \((j,-k)\) & \(\set{\underline{j},\underline{-k},\overline{-i}}\) \\
            \((-j,k)\) & \(\set{\underline{-j},\underline{k},\overline{-i}}\) \\
            \((-j,-k)\) & \(\set{\underline{-j},\underline{-k},\overline{i}}\) \\
            \((k,i)\) & \(\set{\underline{k},\underline{i},\overline{j}}\) \\
            \((k,-i)\) & \(\set{\underline{k},\underline{-i},\overline{-j}}\) \\
            \((-k,i)\) & \(\set{\underline{-k},\underline{i},\overline{-j}}\) \\
            \((-k,-i)\) & \(\set{\underline{-k},\underline{-i},\overline{j}}\) \\
            \((k,j)\) & \(\set{\underline{k},\underline{j},\overline{-i}}\) \\
            \((k,-j)\) & \(\set{\underline{k},\underline{-j},\overline{i}}\) \\
            \((-k,j)\) & \(\set{\underline{-k},\underline{j},\overline{i}}\) \\
            \((-k,-j)\) & \(\set{\underline{-k},\underline{-j},\overline{-i}}\)
        \end{tabular}
        
\medskip
\medskip
    \caption{Facets of \(\simpcmplx(G)\)}
    \label{fig:quaternion_facets}
\end{table}

The induced subcomplex on \(\set[\underline{\pm u}]{u\in\set{i,j,k}}\) is the graph pictured in \autoref{fig:quaternion_subcomplex}.
    \begin{figure}
        \centering
            \begin{tikzpicture}[thick]
                \tikzstyle{every node}=[draw,circle]
                \node (i) at (0:3) {\(\underline{i}\)};
                \node (j) at (360/6:3) {\(\underline{j}\)};
                \node (k) at (2*360/6:3) {\(\underline{k}\)};
                \node (-i) at (3*360/6:3) {\(\underline{-i}\)};
                \node (-j) at (4*360/6:3) {\(\underline{-j}\)};
                \node (-k) at (5*360/6:3) {\(\underline{-k}\)};
                \draw (i) -- (j) -- (-i) -- (-j) -- (i);
                \draw (i) -- (k) -- (-i) -- (-k) -- (i);
                \draw (j) -- (k) -- (-j) -- (-k) -- (j);
            \end{tikzpicture}
        \caption{A subcomplex of \(\simpcmplx(G)\)}
        \label{fig:quaternion_subcomplex}
    \end{figure}
We may decompose this graph into three \(4\)-cycles, which are
    \[
        (\underline{i},\underline{j},\underline{-i},\underline{-j})\text{, }(\underline{i},\underline{k},\underline{-i},\underline{-k})\text{, and }(\underline{j},\underline{k},\underline{-j},\underline{-k}).
    \]
Any pair of these \(4\)-cycles intersect at two vertices and each \(4\)-cycle can be viewed as the equator of an octohedron in \(\simpcmplx(G)\) whose poles are \(\overline{\pm u}\) for some \(u\in\set{i,j,k}\). For example, the first of the aforementioned \(4\)-cycles bounds an octohedron whose poles are \(\overline{\pm k}\).

Note that \(\simpcmplx(G)\) is the simplicial complex Herman and Pakianathan called \(X(Q_8)\) \cite[p.18]{herman}. The geometric realization of \(\simpcmplx(G)\) consists of three \(2\)-spheres, each pair of which is glued at two points. Since \(\oser(G)\) is the geometric realization of \(\simpcmplx(G)\) minus its \(0\)-skeleton, we find that \(\oser(G)\) is the disjoint union of three copies of a \(2\)-sphere which has had \(6\) points removed. This is not the desingularized complex \(Y(Q_8)\) of Herman and Pakianathan, which is simply the disjoint union of three \(2\)-spheres.
\end{example}

Our next example uses the alternating \(3\)-quasigroup from \autoref{ex:order_five}.

\begin{example}
\label{ex:order_five_open}
Let \(A\) be the alternating \(3\)-quasigroup of order \(5\) introduced in \autoref{ex:order_five}. We have that
    \[
        \nct(A)=\set[(a,b,c)]{\set{a,b,c}\in\binom{A}{3}}
    \]
so
    \[
        \inp(A)=A=\out(A).
    \]
We see that
    \[
        \vertices(\simpcmplx(A))=\set[\underline{u}]{u\in A}\cup\set[\overline{u}]{u\in A}.
    \]
For chosen representatives of the orbits of the alternating group on \(\nct(A)\) \autoref{fig:order_five_facets} lists the corresponding facets of \(\simpcmplx(A)\).

\begin{table}
    \centering
    \begin{tabular}{c|c}
            \((a,b,c)\in\nct(A)\) & \(\sigma\in\simpcmplx(A)\) \\ \hline
            \((0,1,2)\) & \(\set{\underline{0},\underline{1},\underline{2},\overline{3}}\) \\
            \((1,2,3)\) & \(\set{\underline{1},\underline{2},\underline{3},\overline{4}}\) \\
            \((2,3,4)\) & \(\set{\underline{2},\underline{3},\underline{4},\overline{0}}\) \\
            \((3,4,0)\) & \(\set{\underline{3},\underline{4},\underline{0},\overline{1}}\) \\
            \((4,0,1)\) & \(\set{\underline{4},\underline{0},\underline{1},\overline{2}}\) \\
            \((0,2,1)\) & \(\set{\underline{0},\underline{2},\underline{1},\overline{4}}\) \\
            \((1,3,2)\) & \(\set{\underline{1},\underline{3},\underline{2},\overline{0}}\) \\
            \((2,4,3)\) & \(\set{\underline{2},\underline{4},\underline{3},\overline{1}}\) \\
            \((3,0,4)\) & \(\set{\underline{3},\underline{0},\underline{4},\overline{2}}\) \\
            \((4,1,0)\) & \(\set{\underline{4},\underline{1},\underline{0},\overline{3}}\) \\
            \((0,1,3)\) & \(\set{\underline{0},\underline{1},\underline{3},\overline{4}}\) \\
            \((1,2,4)\) & \(\set{\underline{1},\underline{2},\underline{4},\overline{0}}\) \\
            \((2,3,0)\) & \(\set{\underline{2},\underline{3},\underline{0},\overline{1}}\) \\
            \((3,4,1)\) & \(\set{\underline{3},\underline{4},\underline{1},\overline{2}}\) \\
            \((4,0,2)\) & \(\set{\underline{4},\underline{0},\underline{2},\overline{3}}\) \\
            \((0,3,1)\) & \(\set{\underline{0},\underline{3},\underline{1},\overline{2}}\) \\
            \((1,4,2)\) & \(\set{\underline{1},\underline{4},\underline{2},\overline{3}}\) \\
            \((2,0,3)\) & \(\set{\underline{2},\underline{0},\underline{3},\overline{4}}\) \\
            \((3,1,4)\) & \(\set{\underline{3},\underline{1},\underline{4},\overline{0}}\) \\
            \((4,2,0)\) & \(\set{\underline{4},\underline{2},\underline{0},\overline{1}}\)
    \end{tabular}
    
    \medskip
    \medskip
    \caption{Facets of \(\simpcmplx(A)\)}
    \label{fig:order_five_facets}
\end{table}

Observe that if
    \[
        \set{u_0,u_1,u_2,u_3,u_4}=\set{0,1,2,3,4}
    \]
then \(\simpcmplx(A)\) has a subcomplex \(S\) whose facets form the set
    \[
        \set[\set{\underline{u}_i,\underline{u}_j,\underline{u}_k,\overline{u}_4}]{\set{i,j,k}\in\binom{\set{0,1,2,3}}{3}}.
    \]
Note that \(\geo(S)\) is homeomorphic to the \(3\)-simplex whose vertices are \(\underline{u}_0\), \(\underline{u}_1\), \(\underline{u}_2\), and \(\underline{u}_3\). It follows that \(\simpcmplx(A)\) is the subdivided complex obtained by adding a single vertex to the center of each facet of the boundary of the \(4\)-simplex whose vertices are the \(\underline{u}_i\) for \(i\in\set{0,1,2,3,4}\). This implies that \(\geo(\simpcmplx(A))\) is homeomorphic to the \(3\)-sphere. Thus, \(\oser(A)\) is homeomorphic to the \(3\)-sphere minus the \(1\)-skeleton of \(\simpcmplx(A)\). That \(1\)-skeleton is the graph pictured in \autoref{fig:order_five_skeleton}, which is homotopy equivalent to the join of \(21\) circles.

    \begin{figure}
        \centering
            \begin{tikzpicture}[thick]
                \tikzstyle{every node}=[draw,circle]
                \node (a0) at (0:3) {\(\underline{0}\)};
                \node (a1) at (360/5:3) {\(\underline{1}\)};
                \node (a2) at (2*360/5:3) {\(\underline{2}\)};
                \node (a3) at (3*360/5:3) {\(\underline{3}\)};
                \node (a4) at (4*360/5:3) {\(\underline{4}\)};
                \node (b0) at (0+180:4) {\(\overline{0}\)};
                \node (b1) at (360/5+180:4) {\(\overline{1}\)};
                \node (b2) at (2*360/5+180:4) {\(\overline{2}\)};
                \node (b3) at (3*360/5+180:4) {\(\overline{3}\)};
                \node (b4) at (4*360/5+180:4) {\(\overline{4}\)};
                \foreach \x in {1,2,3,4}
                {
                    \draw (a0) -- (a\x);
                    \draw (b0) -- (a\x);
                };
                \foreach \x in {0,2,3,4}
                {
                    \draw (a1) -- (a\x);
                    \draw (b1) -- (a\x);
                };
                \foreach \x in {0,1,3,4}
                {
                    \draw (a2) -- (a\x);
                    \draw (b2) -- (a\x);
                };
                \foreach \x in {0,1,2,4}
                {
                    \draw (a3) -- (a\x);
                    \draw (b3) -- (a\x);
                };
                \foreach \x in {0,1,2,3}
                {
                    \draw (a4) -- (a\x);
                    \draw (b4) -- (a\x);
                };
            \end{tikzpicture}
        \caption{The \(1\)-skeleton of \(\simpcmplx(A)\)}
        \label{fig:order_five_skeleton}
    \end{figure}
\end{example}

For our penultimate example we consider the order \(6\) alternating \(3\)-quasigroup from \autoref{ex:order_six}.

\begin{example}
\label{ex:order_six_open}
Let \(A\) be the ternary alternating quasigroup \(U\boxtimes_\alpha V\) introduced in \autoref{ex:order_six}. We have that
    \[
        \nct(A)=\set[((u_1,v_1),(u_2,v_2),(u_3,v_3))\in(U\times V)^3]{\set{u_1,u_2,u_3}=\set{0,1,2}}.
    \]
so
    \[
        \inp(A)=A
    \]
and
    \[
        \out(A)=\set{(0,0),(0,1)}.
    \]
We see that
    \[
        \vertices(\simpcmplx(A))=\set[\underline{uv}]{u\in\Z/3\Z\text{ and }v\in\Z/2\Z}\cup\set{\overline{00},\overline{01}}.
    \]
For chosen representative of the orbits of the alternating group on \(\nct(A)\) \autoref{fig:order_six_facets} lists the corresponding facets of \(\simpcmplx(A)\).

    \begin{table}
        \centering
        \begin{tabular}{c|c}
                \(((u_1,v_1),(u_2,v_2),(u_3,v_3))\in\nct(A)\) & \(\sigma\in\simpcmplx(A)\) \\ \hline
                \(((0,0),(1,0),(2,0))\) & \(\set{\underline{00},\underline{10},\underline{20},\overline{00}}\) \\
                \(((0,0),(1,0),(2,1))\) &
                \(\set{\underline{00},\underline{10},\underline{21},\overline{01}}\) \\
                \(((0,0),(1,1),(2,0))\) &
                \(\set{\underline{00},\underline{11},\underline{20},\overline{01}}\) \\
                \(((0,0),(1,1),(2,1))\) &
                \(\set{\underline{00},\underline{11},\underline{21},\overline{00}}\) \\
                \(((0,1),(1,0),(2,0))\) &
                \(\set{\underline{01},\underline{10},\underline{20},\overline{01}}\) \\
                \(((0,1),(1,0),(2,1))\) &
                \(\set{\underline{01},\underline{10},\underline{21},\overline{00}}\) \\
                \(((0,1),(1,1),(2,0))\) &
                \(\set{\underline{01},\underline{11},\underline{20},\overline{00}}\) \\
                \(((0,1),(1,1),(2,1))\) &
                \(\set{\underline{01},\underline{11},\underline{21},\overline{01}}\) \\
                \(((0,0),(2,0),(1,0))\) &
                \(\set{\underline{00},\underline{20},\underline{10},\overline{01}}\) \\
                \(((0,0),(2,0),(1,1))\) &
                \(\set{\underline{00},\underline{20},\underline{11},\overline{00}}\) \\
                \(((0,0),(2,1),(1,0))\) &
                \(\set{\underline{00},\underline{21},\underline{10},\overline{00}}\) \\
                \(((0,0),(2,1),(1,1))\) &
                \(\set{\underline{00},\underline{21},\underline{11},\overline{01}}\) \\
                \(((0,1),(2,0),(1,0))\) &
                \(\set{\underline{01},\underline{20},\underline{10},\overline{00}}\) \\
                \(((0,1),(2,0),(1,1))\) &
                \(\set{\underline{01},\underline{20},\underline{11},\overline{01}}\) \\
                \(((0,1),(2,1),(1,0))\) &
                \(\set{\underline{01},\underline{21},\underline{10},\overline{01}}\) \\
                \(((0,1),(2,1),(1,1))\) &
                \(\set{\underline{01},\underline{21},\underline{11},\overline{00}}\)
        \end{tabular}
        
        \medskip
        \medskip
        \caption{Facets of \(\simpcmplx(A)\)}
        \label{fig:order_six_facets}
    \end{table}
    
The induced subcomplex on \(\set[\underline{uv}]{u\in\Z/3\Z\text{ and }v\in\Z/2\Z}\) is an octohedron, as one may see by taking as the equator the vertices \(\underline{00}\), \(\underline{20}\), \(\underline{01}\), and \(\underline{21}\), in that order, and taking the poles to be \(\overline{10}\) and \(\overline{11}\). It follows that \(\geo(\simpcmplx(A))\) is a \(3\)-sphere whose hemispheres are the two cones over this octohedron with cone points \(\overline{00}\) and \(\overline{01}\). Thus, \(\oser(A)\) is homeomorphic to the \(3\)-sphere minus the \(1\)-skeleton of \(\simpcmplx(A)\). That \(1\)-skeleton is the graph pictured in \autoref{fig:order_six_skeleton}, which is homotopy equivalent to the join of \(17\) circles.

    \begin{figure}
        \centering
            \begin{tikzpicture}[thick]
                \tikzstyle{every node}=[draw,circle]
                \node (a00) at (360/8:4) {\(\underline{00}\)};
                \node (a20) at (3*360/8:4) {\(\underline{20}\)};
                \node (a01) at (5*360/8:4) {\(\underline{01}\)};
                \node (a21) at (7*360/8:4) {\(\underline{21}\)};
                \node (a10) at (0,1) {\(\underline{10}\)};
                \node (a11) at (0,-1) {\(\underline{11}\)};
                \node (b00) at (5,0) {\(\overline{00}\)};
                \node (b01) at (-5,0) {\(\overline{01}\)};
                \draw (a00) -- (a20) -- (a01) -- (a21) -- (a00);
                \draw (a00) -- (a10) -- (a20);
                \draw (a01) -- (a10) -- (a21);
                \draw (a00) -- (a11) -- (a20);
                \draw (a01) -- (a11) -- (a21);
                \foreach \x in {0,1,2}
                {
                    \foreach \y in {0,1}
                    {
                        \draw (a\x\y) -- (b00);
                        \draw (a\x\y) -- (b01);
                    };
                };
            \end{tikzpicture}
        \caption{The \(1\)-skeleton of \(\simpcmplx(A)\)}
        \label{fig:order_six_skeleton}
    \end{figure}
\end{example}

Our last example is the most degenerate, but it's worth noting that this corner case is still defined.

\begin{example}
Suppose that \(A\) is a commutative \(n\)-quasigroup. We have that \(\oser(A)\) is the empty manifold. To see this, note that \(\nct(A)=\varnothing\), which implies that \(\inp(A)=\varnothing\) and \(\out(A)=\varnothing\). It follows that \(\simpcmplx(A)=\varnothing\) and hence
    \[
        (\ogeo\circ\simpcmplx)(A)
    \]
is the empty topological space with no points.
\end{example}

\subsection{The graph retract}
The manifolds \(\oser(A)\) are relatively unstructured up to homotopy.

\begin{defn}[NC graph]
Given an alternating \(n\)-quasigroup \(A\) the \emph{NC graph} of \(A\) is the simple graph
    \[
        \ncgr(A)=(\ncvert(A),\ncedge(A))
    \]
where
    \[
        \ncvert(A)=\orb_{\alt_n}(\nct(A))
    \]
and we define \(\ncedge(A)\) to consist of all pairs
    \[
        \set{a/\alt_n,b/\alt_n}\in\binom{\ncvert(A)}{2}
    \]
such that either
    \begin{enumerate}
        \item we have \(\set{a_1,\dots,a_n}=\set{b_1,\dots,b_n}\) or
        \item \(f(a)=f(b)\) and
                \[
                    \abs{\set{a_1,\dots,a_n}\cap\set{b_1,\dots,b_n}}=n-1.
                \]
    \end{enumerate}
\end{defn}

\begin{prop}
For any alternating \(n\)-quasigroup \(A\) we have an embedding
    \[
        \iota\colon\geo(\ncgr(A))\hookrightarrow\oser(A)
    \]
such that \(\im(\iota)\) is a strong deformation retract of \(\oser(A)\).
\end{prop}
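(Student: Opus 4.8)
The plan is to identify \(\ncgr(A)\) with the dual graph of the pseudomanifold \(\simpcmplx(A)\), to realize \(\iota\) as the inclusion of the dual \(1\)-skeleton, and then to produce a strong deformation retraction by collapsing the part of \(\oser(A)\) lying over the deleted \((n-2)\)-skeleton. First I would set up the combinatorial dictionary. The assignment \(a/\alt_n\mapsto\set{\underline{a}_1,\dots,\underline{a}_n,\overline{f(a)}}\) is a bijection from \(\ncvert(A)=\orb_{\alt_n}(\nct(A))\) onto \(\fct(\simpcmplx(A))\): it is well defined because \(f\) is invariant under \(\alt_n\), and injective because an odd permutation of a noncommuting tuple changes the value of \(f\). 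Each \((n-1)\)-face of \(\simpcmplx(A)\) arises from a facet either by deleting the output vertex (an all-input face) or by deleting one input vertex (a face containing the output). By the pseudomanifold condition each such face lies in exactly two facets, and unwinding the two cases recovers precisely conditions (1) and (2) defining \(\ncedge(A)\); conversely these conditions force the two facets to be distinct and to share exactly that face. Since two distinct \(n\)-simplices share at most one \((n-1)\)-face, \(\ncgr(A)\) is a simple graph whose edges correspond bijectively to the \((n-1)\)-faces of \(\simpcmplx(A)\), so it is exactly the dual graph.

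Next I would define \(\iota\). Let \(b_\sigma\) denote the barycenter of a face \(\sigma\). I send each vertex of \(\ncgr(A)\) (a facet \(\gamma\)) to \(b_\gamma\in\ocvx(\gamma)\), and each edge (a shared \((n-1)\)-face \(\tau\) between facets \(\gamma,\gamma'\)) to the broken segment \([b_\gamma,b_\tau]\cup[b_\tau,b_{\gamma'}]\). Each half-segment lies in \(\ocvx(\gamma)\cup\ocvx(\tau)\subseteq\oser(A)\), since moving from \(b_\gamma\) toward \(b_\tau\) keeps all barycentric coordinates positive until the endpoint, where the carrier drops to \(\tau\). The image \(\Sigma_1\coloneqq\im(\iota)\) is precisely the union of the dual cells \(D(\sigma)\) for \(\dim\sigma\ge n-1\) inside the barycentric subdivision \(\operatorname{sd}(\simpcmplx(A))\), that is, the dual \(1\)-skeleton, and \(\iota\) is the evident simplicial isomorphism of \(\geo(\ncgr(A))\) (with each edge subdivided at its midpoint) onto \(\Sigma_1\). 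That \(\iota\) is a topological embedding, not merely a continuous bijection, I would verify using that it is affine with bounded distortion on each \(1\)-simplex and that both sides carry the subspace topology from their ambient Euclidean spaces.

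The heart of the proof is the strong deformation retraction of \(\oser(A)\) onto \(\Sigma_1\). For each face \(\sigma\) let \(\beta_\sigma\colon\oser(A)\to[0,1]\) be the barycentric coordinate function of \(\operatorname{sd}(\simpcmplx(A))\), so that \(x=\sum_\sigma\beta_\sigma(x)b_\sigma\) with the \(\beta_\sigma\) globally continuous. Setting \(w_d(s)=1\) for \(d\ge n-1\) and \(w_d(s)=1-s\) for \(d\le n-2\), I define
\[
H(x,s)=\frac{\sum_\sigma w_{\dim\sigma}(s)\,\beta_\sigma(x)\,b_\sigma}{\sum_\sigma w_{\dim\sigma}(s)\,\beta_\sigma(x)}.
\]
The denominator is bounded below by \(\beta_{\sigma_0}(x)>0\) for the carrier \(\sigma_0\) of \(x\), which has \(\dim\ge n-1\) precisely because \(x\in\oser(A)\), so \(H\) is continuous on \(\oser(A)\times[0,1]\). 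For \(0\le s<1\) the support flag of \(H(x,s)\) equals that of \(x\), keeping \(H(x,s)\) in \(\oser(A)\); at \(s=1\) the weights on faces of dimension \(\le n-2\) vanish, leaving a convex combination of barycenters of faces of dimension \(\ge n-1\) along a single flag, hence a point of \(\Sigma_1\). Since \(\beta_\sigma\) vanishes on \(\Sigma_1\) whenever \(\dim\sigma\le n-2\), we have \(H(\cdot,s)=\mathrm{id}\) on \(\Sigma_1\) for all \(s\), so this is a strong deformation retraction onto \(\im(\iota)\).

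I expect the main obstacle to be exactly the global continuity of this retraction. The naive approach of collapsing each open \(n\)-simplex to the star at its barycenter and each open \((n-1)\)-face to its midpoint fails to glue: a straight-line retraction inside a simplex has a cut locus where the choice of ``nearest facet'' jumps, producing a discontinuity along the set where the minimal barycentric coordinate is attained twice. Working instead in the coordinates of the barycentric subdivision and scaling down, rather than deleting, the weights on the forbidden skeleton circumvents this, because the operation depends only on the dimension of each face while the \(\beta_\sigma\) are continuous on all of \(\oser(A)\). A secondary technical point, which I would handle carefully when \(A\) is infinite and \(\simpcmplx(A)\) is not locally finite, is confirming that \(\iota\) is a homeomorphism onto its image rather than a mere continuous bijection.
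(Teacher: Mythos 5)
Your proposal is correct and takes essentially the same route as the paper: both realize \(\ncgr(A)\) as the dual \(1\)-skeleton by sending vertices and edges to barycenters of facets and of shared \((n-1)\)-faces, and both retract each top simplex \(\cvx(\set{c_1,\dots,c_{n+1}})\) of the barycentric subdivision onto its edge \(\cvx(\set{c_n,c_{n+1}})\). Your reweighting formula \(H\) traces exactly the same line segments as the paper's construction of \(y\) via the affine span of \(\set{c_1,\dots,c_{n-1},x}\) (one checks \(H(x,1)=(\beta_nc_n+\beta_{n+1}c_{n+1})/(\beta_n+\beta_{n+1})=y\)), differing only in the time parametrization.
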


\begin{proof}
Given
    \[
        \set{a/\alt_n,b/\alt_n}\in\ncedge(A)
    \]
and \(\gamma\in\left[\frac{1}{2},1\right]\) we define
    \[
        \iota(\gamma a/\alt_n+(1-\gamma) b/\alt_n)=\frac{\gamma}{n+1}\left(\sum_{i=1}^n\underline{a}_i+\overline{f(a)}\right)+\frac{1-\gamma}{n}\sum_{i=1}^nc_i
    \]
where
    \[
        \set{c_1,\dots,c_n}=\set{\underline{a}_1,\dots,\underline{a}_n,\overline{f(a)}}\cap\set{\underline{b}_1,\dots,\underline{b}_n,\overline{f(b)}}.
    \]

Edges from \(\ncedge(A)\) are mapped by \(\iota\) to piecewise linear curves between the midpoints of the geometric realization of facets in \(\simpcmplx(A)\) which intersect at an \((n-1)\)-face.

Define a homotopy
    \[
        h\colon\oser(A)\times[0,1]\to\oser(A)
    \]
as follows. Let \(\set{b_1,\dots,b_{n+1}}\) be the vertices of a facet of \(\simpcmplx(A)\) and consider the maximal flag
    \[
        \set{b_1}\subseteq\set{b_1,b_2}\subseteq\cdots\subseteq\set{b_1,\dots,b_{n+1}}
    \]
associated with the given labeling on the \(b_i\). Define
    \[
        c_k=\frac{1}{k}\sum_{i=1}^kb_k
    \]
and note that the \(c_k\) are the vertices of the facet of the barycentric subdivision of the simplex with vertices \(\set{b_1,\dots,b_{n+1}}\) corresponding to the flag in question. When
    \[
        x\in\cvx(\set{c_1,\dots,c_{n+1}})
    \]
let \(P\) be the affine span of
    \[
        \set{c_1,\dots,c_{n-1},x}
    \]
and let \(y\) be the unique point of intersection between \(P\) and \(\cvx(\set{c_n,c_{n+1}})\). We set
    \[
        h(x,t)=(1-t)x+ty.
    \]
Note that if \(x\in\im(\iota)\) then \(y=x\) and \(h(x,t)=x\) for all time \(t\).
\end{proof}

\subsection{Examples of the graph retract}
\label{subsec:graph_retract_examples}
We give several examples of \(\ncgr(A)\) for various choices of \(A\). We can be a bit more expansive than in the analogous \autoref{subsec:examples_open_serenation} as there are fewer topological considerations here.

Our first example is that of the quaternion group of \autoref{ex:quaternion_open}.

\begin{example}
\label{ex:quaternion_graph}
Let \(G\) be the quaternion group of order \(8\). Note that we have one vertex of \(\ncgr(G)\) for each facet of \(\simpcmplx(G)\) so we have that
    \[
        \ncvert(G)=\set[(\pm u,\pm v)/\alt_2]{\set{u,v}\in\binom{\set{i,j,k}}{2}}.
    \]
Choosing orbit representatives and suppressing the obvious isomorphism we take
    \[
        \ncvert(G)=\set[(\pm u,\pm v)]{\set{u,v}\in\binom{\set{i,j,k}}{2}}.
    \]
We see that \(\ncgr(G)\) is \(3\)-regular with the neighbors of \((x,y)\) being
    \[
        (y,x)\text{, }(xyx^{-1},x)\text{, and }(y,y^{-1}xy).
    \]
The resulting graph is pictured in \autoref{fig:quaternion_graph}. We see that
    \[
        \ncgr(G)\cong Q_3\sqcup Q_3\sqcup Q_3
    \]
where \(Q_3\) is the \(3\)-cube graph.

    \begin{figure}
        \centering
            \resizebox{\textwidth}{!}{
            \begin{tikzpicture}[thick]
                \tikzstyle{every node}=[draw,circle]
                \node (i0j0) at (0,0) {\((i,j)\)};
                \node (i0j1) at (2,4) {\((i,-j)\)};
                \node (i1j0) at (4,2) {\((-i,j)\)};
                \node (i1j1) at (6,6) {\((-i,-j)\)};
                \node (j0i0) at (2,2) {\((j,i)\)};
                \node (j0i1) at (6,0) {\((j,-i)\)};
                \node (j1i0) at (0,6) {\((-j,i)\)};
                \node (j1i1) at (4,4) {\((-j,-i)\)};
                \draw (i0j0) -- (j0i1) -- (i1j1) -- (j1i0) -- (i0j0);
                \draw (j0i0) -- (i1j0) -- (j1i1) -- (i0j1) -- (j0i0);
                \draw (i0j0) -- (j0i0);
                \draw (j0i1) -- (i1j0);
                \draw (i1j1) -- (j1i1);
                \draw (j1i0) -- (i0j1);
                \node (i0k0) at (8,0) {\((i,k)\)};
                \node (i0k1) at (10,4) {\((i,-k)\)};
                \node (i1k0) at (12,2) {\((-i,k)\)};
                \node (i1k1) at (14,6) {\((-i,-k)\)};
                \node (k0i0) at (10,2) {\((k,i)\)};
                \node (k0i1) at (14,0) {\((k,-i)\)};
                \node (k1i0) at (8,6) {\((-k,i)\)};
                \node (k1i1) at (12,4) {\((-k,-i)\)};
                \draw (i0k0) -- (k0i1) -- (i1k1) -- (k1i0) -- (i0k0);
                \draw (k0i0) -- (i1k0) -- (k1i1) -- (i0k1) -- (k0i0);
                \draw (i0k0) -- (k0i0);
                \draw (k0i1) -- (i1k0);
                \draw (i1k1) -- (k1i1);
                \draw (k1i0) -- (i0k1);
                \node (j0k0) at (4,-8) {\((j,k)\)};
                \node (j0k1) at (6,-4) {\((j,-k)\)};
                \node (j1k0) at (8,-6) {\((-j,k)\)};
                \node (j1k1) at (10,-2) {\((-j,-k)\)};
                \node (k0j0) at (6,-6) {\((k,j)\)};
                \node (k0j1) at (10,-8) {\((k,-j)\)};
                \node (k1j0) at (4,-2) {\((-k,j)\)};
                \node (k1j1) at (8,-4) {\((-k,-j)\)};
                \draw (j0k0) -- (k0j1) -- (j1k1) -- (k1j0) -- (j0k0);
                \draw (k0j0) -- (j1k0) -- (k1j1) -- (j0k1) -- (k0j0);
                \draw (j0k0) -- (k0j0);
                \draw (k0j1) -- (j1k0);
                \draw (j1k1) -- (k1j1);
                \draw (k1j0) -- (j0k1);
            \end{tikzpicture}
            }
        \caption{The graph \(\ncgr(G)\)}
        \label{fig:quaternion_graph}
    \end{figure}
\end{example}

Our next example concerns the order \(5\) quasigroup from \autoref{ex:order_five} and \autoref{ex:order_five_open}.

\begin{example}
\label{ex:order_five_graph}
Let \(A\) be the alternating \(3\)-quasigroup of order \(5\) from the examples indicated above. As in our previous example of an NC graph we choose orbit representatives under the action of \(\alt_3\) and suppress the obvious isomorphism to say that
    \begin{align*}
        \ncvert(A)=\{ & 012,123,234,340,401,021,132,243,304,410, \\
        & 013,124,230,341,402,031,142,203,314,420\}.
    \end{align*}
Since \(A\) is ternary we have that \(\ncgr(A)\) is \(4\)-regular. We could give a sort of conjugacy formula for the neighbors of a vertex analogous to that from \autoref{ex:quaternion_graph} but in this case it's easier to just directly examine \autoref{fig:order_five_facets} in order to see that \(\ncgr(A)\) is the graph pictured in \autoref{fig:order_five_graph}.

    \begin{figure}
        \centering
            \resizebox{\textwidth}{!}{
            \begin{tikzpicture}[thick]
                \tikzstyle{every node}=[draw,circle]
                \node (012) at (-1,1) {\(012\)};
                \node (410) at (1,1) {\(410\)};
                \node (402) at (1,-1) {\(402\)};
                \node (142) at (-1,-1) {\(142\)};
                \draw (012) -- (410) -- (402) -- (142) -- (012);
                \draw (012) -- (402);
                \draw (410) -- (142);
                \node (021) at (-2,2) {\(021\)};
                \node (013) at (-2,4) {\(013\)};
                \node (203) at (-4,4) {\(203\)};
                \node (123) at (-4,2) {\(123\)};
                \draw (021) -- (013) -- (203) -- (123) -- (021);
                \draw (021) -- (203);
                \draw (013) -- (123);
                \node (401) at (2,2) {\(401\)};
                \node (304) at (4,2) {\(304\)};
                \node (341) at (4,4) {\(341\)};
                \node (031) at (2,4) {\(031\)};
                \draw (401) -- (304) -- (341) -- (031) -- (401);
                \draw (401) -- (341);
                \draw (304) -- (031);
                \node (124) at (-2,-2) {\(124\)};
                \node (132) at (-4,-2) {\(132\)};
                \node (314) at (-4,-4) {\(314\)};
                \node (234) at (-2,-4) {\(234\)};
                \draw (124) -- (132) -- (314) -- (234) -- (124);
                \draw (124) -- (314);
                \draw (132) -- (234);
                \node (420) at (2,-2) {\(420\)};
                \node (340) at (4,-2) {\(340\)};
                \node (230) at (4,-4) {\(230\)};
                \node (243) at (2,-4) {\(243\)};
                \draw (420) -- (340) -- (230) -- (243) -- (420);
                \draw (420) -- (230);
                \draw (340) -- (243);
                \draw (012) -- (021);
                \draw (142) -- (124);
                \draw (402) -- (420);
                \draw (410) -- (401);
                \draw (132) -- (123);
                \draw (013) -- (031);
                \draw (304) -- (340);
                \draw (243) -- (234);
                \draw (314) to [out=135,in=270] (-5,-3);
                \draw (-5,-3) -- (-5,4);
                \draw (-5,4) to [out=90,in=180] (-4,5);
                \draw (-4,5) to [out=0,in=160] (341);
                \draw (230) to [out=45,in=270] (5,-3);
                \draw (5,-3) -- (5,4);
                \draw (5,4) to [out=90,in=0] (4,5);
                \draw (4,5) to [out=180,in=20] (203);
            \end{tikzpicture}
            }
        \caption{The graph \(\ncgr(A)\)}
        \label{fig:order_five_graph}
    \end{figure}
\end{example}

The case of the order \(6\) quasigroup from \autoref{ex:order_six} and \autoref{ex:order_six_open} is similar.

\begin{example}
\label{ex:order_six_graph}
Let \(A\) be the alternating \(3\)-quasigroup of order \(6\) from the examples indicated above. Again we choose orbit representatives and declare that
    \begin{align*}
        \ncvert(A)=\{ & (00,10,20),(00,10,21),(00,11,20),(00,11,21), \\
        & (01,10,20),(01,10,21),(01,11,20),(01,11,21), \\
        & (00,20,10),(00,20,11),(00,21,10),(00,21,11), \\
        & (01,20,10),(01,20,11),(01,21,10),(01,21,11)\}.
    \end{align*}

In this case we see that \(\ncgr(A)\), which is pictured in \autoref{fig:order_six_graph}, is isomorphic to \(C_4\square C_4\) where \(C_4\) is the \(4\)-cycle graph and \(\square\) is the Cartesian product of graphs. Note also that \(C_4\square C_4\cong Q_4\), where \(Q_4\) is the \(4\)-cube graph.

    \begin{figure}
        \centering
            \resizebox{\textwidth}{!}{
            \begin{tikzpicture}[thick]
                \tikzstyle{every node}=[draw,circle]
                \node (011120) at (2,2) {\((01,11,20)\)};
                \node (012010) at (-2,2) {\((01,20,10)\)};
                \node (011020) at (-2,-2) {\((01,10,20)\)};
                \node (012011) at (2,-2) {\((01,20,11)\)};
                \draw (011120) -- (012010) -- (011020) -- (012011) -- (011120);
                \node (002011) at (4,4) {\((00,20,11)\)};
                \node (001020) at (-4,4) {\((00,10,20)\)};
                \node (002010) at (-4,-4) {\((00,20,10)\)};
                \node (001120) at (4,-4) {\((00,11,20)\)};
                \draw (002011) -- (001020) -- (002010) -- (001120) -- (002011);
                \node (001121) at (6,6) {\((00,11,21)\)};
                \node (002110) at (-6,6) {\((00,21,10)\)};
                \node (001021) at (-6,-6) {\((00,10,21)\)};
                \node (002111) at (6,-6) {\((00,21,11)\)};
                \draw (001121) -- (002110) -- (001021) -- (002111) -- (001121);
                \node (012111) at (8,8) {\((01,21,11)\)};
                \node (011021) at (-8,8) {\((01,10,21)\)};
                \node (012110) at (-8,-8) {\((01,21,10)\)};
                \node (011121) at (8,-8) {\((01,11,21)\)};
                \draw (012111) -- (011021) -- (012110) -- (011121) -- (012111);
                \draw (012010) -- (001020) -- (002110) -- (011021);
                \draw (011120) -- (002011) -- (001121) -- (012111);
                \draw (011020) -- (002010) -- (001021) -- (012110);
                \draw (012011) -- (001120) -- (002111) -- (011121);
                \draw (011120) to [out=90,in=180] (012111);
                \draw (012010) to [out=180,in=270] (011021);
                \draw (011020) to [out=270,in=0] (012110);
                \draw (012011) to [out=0,in=90] (011121);
            \end{tikzpicture}
            }
        \caption{The graph \(\ncgr(A)\)}
        \label{fig:order_six_graph}
    \end{figure}
\end{example}

Our last example concerns the quasigroups \(\F^{(n)}\) of \autoref{ex:field}.

\begin{example}
Fix an odd prime power \(q\) and observe that
    \[
        ((u_1,v_1),\dots,(u_n,v_n))\in\nct(\F_q^{(n)})
    \]
if and only if
    \[
        \det(u_1,\dots,u_n)\neq0.
    \]
It follows that
    \[
        \nct(\F_q^{(n)})\cong\operatorname{GL}_n(\F_q)\times\F_q^n
    \]
where \(\operatorname{GL}_n(\F_q)\) is the set of invertible \(n\times n\) matrices with entries in \(\F_q\). We find that \(\ncgr(\F_q^{(n)})\) is an \((n+1)\)-regular graph with
    \begin{align*}
        \abs{\orb_{\alt_n}(\nct(\F_q^{(n)}))} &= \frac{2}{n!}\abs{\nct(\F_q^{(n)})} \\
        &= \frac{2}{n!}\abs{\operatorname{GL}_n(\F_q)}\abs{\F_q^n} \\
        &= \frac{2q^n}{n!}\prod_{k=1}^{n-1}(q^n-q^k)
    \end{align*}
vertices.
\end{example}

\subsection{NC graphs and Johnson graphs}
Finite NC graphs are quite structured, as they are regular and are induced subgraphs of Johnson graphs. For an introduction to Johnson graphs, see \cite{godsil}. Similar comments hold in the infinite case.

\begin{prop}
Let \(A\) be a finite alternating \(n\)-quasigroup and let
    \[
        s=\abs{\vertices(\simpcmplx(A))}=\abs{\inp(A)}+\abs{\out(A)}.
    \]
We have that \(\ncgr(A)\) is an induced subgraph of the Johnson graph \(J(s,n+1,n)\).
\end{prop}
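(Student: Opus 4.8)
The plan is to exhibit an explicit injection from $\ncvert(A)$ into the vertex set of $J(s,n+1,n)$ and to check that it both preserves and reflects adjacency. I identify the vertices of $J(s,n+1,n)$ with the $(n+1)$-element subsets of $\vertices(\simpcmplx(A))$, two such subsets being adjacent exactly when they meet in precisely $n$ points, and I define
\[
\Phi\colon\ncvert(A)\to\binom{\vertices(\simpcmplx(A))}{n+1}
\]
by sending the orbit $a/\alt_n$ to the facet $\set{\underline{a}_1,\dots,\underline{a}_n,\overline{f(a)}}$ of $\simpcmplx(A)$. Since $\alt_n$ acts by permuting the first $n$ coordinates and fixes $f(a)$, both the set $\set{\underline{a}_1,\dots,\underline{a}_n}$ and the vertex $\overline{f(a)}$ depend only on the orbit $a/\alt_n$, so $\Phi$ is well defined.

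The key preliminary observation, which I would isolate as a lemma, is that any $a\in\nct(A)$ has pairwise distinct coordinates. Indeed, if $a_i=a_j$ for some $i\neq j$ then $a$ is fixed by the transposition $(i\,j)$; writing an arbitrary odd permutation as $\pi\circ(i\,j)$ with $\pi$ even and invoking $\alt_n\le\perm(f)$ shows $f(\sigma a)=f(a)$ for every $\sigma\in\perm_n$, contradicting $a\in\nct(A)$. The same style of argument yields a dichotomy I will also record: for noncommuting $a$, the value $f(\sigma a)$ equals $f(a)$ for all even $\sigma$ and equals one common value distinct from $f(a)$ for all odd $\sigma$. Together these give that $\set{\underline{a}_1,\dots,\underline{a}_n,\overline{f(a)}}$ genuinely has $n+1$ elements, since the $n$ underlined inputs are distinct and the overlined output lies in the $\out(A)$ half of $\vertices(\simpcmplx(A))$, which is disjoint from the $\inp(A)$ half; thus $\Phi$ lands where claimed.

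Injectivity follows from this dichotomy. If $\Phi(a/\alt_n)=\Phi(b/\alt_n)$ then, because the underlined and overlined vertices occupy disjoint halves of $\vertices(\simpcmplx(A))$, we must have $\set{a_1,\dots,a_n}=\set{b_1,\dots,b_n}$ and $f(a)=f(b)$. Hence $b=\sigma a$ for some $\sigma\in\perm_n$; were $\sigma$ odd, the dichotomy would force $f(b)\neq f(a)$, so $\sigma$ is even and $a/\alt_n=b/\alt_n$.

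Finally I would confirm that $\Phi$ is an embedding of \emph{induced} subgraphs by a direct intersection count. Using disjointness of the two halves, for distinct orbits,
\[
\abs{\Phi(a/\alt_n)\cap\Phi(b/\alt_n)}=\abs{\set{a_1,\dots,a_n}\cap\set{b_1,\dots,b_n}}+\begin{cases}1 & \text{if } f(a)=f(b),\\ 0 & \text{if } f(a)\neq f(b).\end{cases}
\]
This equals $n$ in exactly two ways: the input sets coincide while the outputs differ (which is condition (1) of $\ncedge(A)$, the output disagreement being automatic, since distinct orbits with equal input set differ by an odd permutation and the dichotomy then separates the outputs), or the input sets meet in $n-1$ points while the outputs agree (condition (2)). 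Therefore $\abs{\Phi(a/\alt_n)\cap\Phi(b/\alt_n)}=n$ if and only if $\set{a/\alt_n,b/\alt_n}\in\ncedge(A)$, which is precisely the assertion that $\ncgr(A)$ is the induced subgraph of $J(s,n+1,n)$ on $\im(\Phi)$. I expect the only real subtlety to be the lemma on noncommuting tuples; once the distinctness of coordinates and the even/odd value dichotomy are in hand, the remainder is bookkeeping about the two-part structure of the vertex set.
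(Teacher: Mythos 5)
Your proof is correct and takes essentially the same route as the paper: both send the orbit \(a/\alt_n\) to the facet \(\set{\underline{a}_1,\dots,\underline{a}_n,\overline{f(a)}}\) (the paper relabels via a bijection \(\psi\colon\vertices(\simpcmplx(A))\to[s]\)) and match Johnson-graph adjacency to the two edge conditions by counting common elements. The only difference is that the paper simply asserts the map is an embedding, while you supply the supporting details --- that a noncommuting tuple has pairwise distinct coordinates and that \(f\) takes exactly two distinct values on the even and odd halves of its \(\perm_n\)-orbit --- which are exactly the facts needed to justify well-definedness, injectivity, and the automatic output disagreement in condition (1).
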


\begin{proof}
Let \(\psi\colon\vertices(\simpcmplx(A))\to[s]\) be a bijection and define a graph homomorphism
    \[
        h\colon\ncgr(A)\to J(s,n+1,n)
    \]
by
    \[
        h(a/\alt_n)=\set{\psi(\underline{a}_1),\dots,\psi(\underline{a}_n),\psi(\overline{f(a)})}.
    \]
Observe that \(h\) is an embedding and that \(\set{a/\alt_n,b/\alt_n}\in\ncedge(A)\) if and only if \(h(a/\alt_n)\) and \(h(b/\alt_n)\) have exactly \(n\) elements in common.
\end{proof}

Since \(\ncgr(A)\) is an induced subgraph of \(J(s,n+1,n)\) when \(A\) is a finite alternating \(n\)-quasigroup we can use interlacing \cite[Theorem 9.1.1]{godsil} to understand the spectrum of \(\ncgr(A)\) given the spectrum of \(J(s,n+1,n)\). The graph \(\ncgr(A)\) has \(\abs{\simpcmplx(A)}\) vertices, so interlacing will give stronger results the larger the ratio
    \[
        \frac{\abs{\simpcmplx(A)}}{\abs{\binom{\vertices(\simpcmplx(A))}{n+1}}}
    \]
is.

\subsection{The Riemannian metric}
For any alternating \(n\)-quasigroup \(A\) we have that \(\oser(A)\) carries a canonical metric. Let \(\delta\) denote the Kronecker delta function.

\begin{defn}[Standard metric]
Let \(A\) be an alternating \(n\)-quasigroup. The \emph{standard metric} \(g\) on \(\oser(A)\) is given by bilinear extension of the following rules:
    \begin{enumerate}
        \item When \(x\in\ocvx\left(\set{\underline{a}_1,\dots,\underline{a}_n,\overline{f(a)}}\right)\) we set
            \[
                g_x(\underline{a}_i-\overline{f(a)},\underline{a}_j-\overline{f(a)})=1+\delta_{ij}.
            \]
        \item When \(x\in\ocvx\left(\set{\underline{a}_1,\dots,\underline{a}_{n-1},\overline{f(a)}}\right)\) we set
            \[
                g_x(\underline{a}_i,\underline{a}_j)=1+\delta_{ij},
            \]
            \[
                g_x(\underline{a}_i,\overline{f(a)})=1,
            \]
            and
            \[
                g_x(\overline{f(a)},\overline{f(a)})=2.
            \]
        \item When \(x\in\ocvx\left(\set{\underline{a}_1,\dots,\underline{a}_n}\right)\) we set
            \[
                g_x(\underline{a}_i,\underline{a}_j)=1+\delta_{ij}.
            \]
    \end{enumerate}
\end{defn}

This metric isn't too exciting in the sense that it always makes \(\oser(A)\) flat. Intuitively, the Riemannian manifold \((\oser(A),g)\) consists of glued copies of regular simplices whose edges all have length \(\sqrt{2}\).

\begin{prop}
    The Riemannian manifold \((\oser(A),g)\) is flat for any alternating \(n\)-quasigroup \(A\) when \(g\) is the standard metric.
\end{prop}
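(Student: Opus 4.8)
The plan is to prove flatness by showing that in \emph{every} serene chart the coordinate expression of the standard metric is one fixed constant symmetric matrix. A metric whose components are constant in a coordinate system has identically vanishing Christoffel symbols, hence vanishing Riemann curvature tensor, so each chart domain is isometric to an open subset of a fixed flat model space. Since the serene charts cover \(\oser(A)\), this gives flatness everywhere. Concretely, let \(M\) be the \(n\times n\) matrix with \(M_{k\ell}=1+\delta_{k\ell}\); this is exactly the Gram matrix appearing in all three clauses of the definition of the standard metric. The claim to establish is that for each \(a\in\nct(A)\) both pullbacks \(\underline{\phi}_a^*g\) and \(\overline{\phi}_a^*g\) equal the constant tensor \(\sum_{k,\ell}M_{k\ell}\,\dd u_k\otimes\dd u_\ell\) on \(\bipyr_n\).

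For the input chart \(\underline{\phi}_a\) I would treat its two defining pieces separately. On the piece \(\sum_i u_i<1\) the map is affine with \(\pdif{}{u_k}\underline{\phi}_a=\underline{a}_k-\overline{f(a)}\), so by clause (1) the components are \(g(\underline{a}_k-\overline{f(a)},\underline{a}_\ell-\overline{f(a)})=1+\delta_{k\ell}=M_{k\ell}\). On the piece \(\sum_i u_i>1\) the map is again affine; differentiating the reflection formula gives \(\pdif{}{u_k}\underline{\phi}_a=\sum_i(\delta_{ik}-\tfrac{2}{n})(\underline{a}_i-\overline{f(a')})\). Applying clause (1) to the tuple \(a'\in\nct(A)\) and expanding \(1+\delta_{ij}\), the constant part contributes \(\big(\sum_i(\delta_{ik}-\tfrac{2}{n})\big)\big(\sum_j(\delta_{j\ell}-\tfrac{2}{n})\big)=(-1)(-1)=1\) while the \(\delta_{ij}\) part contributes \(\sum_i(\delta_{ik}-\tfrac2n)(\delta_{i\ell}-\tfrac2n)=\delta_{k\ell}\), so the components are again \(M_{k\ell}\).

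The output chart \(\overline{\phi}_a\) is handled the same way. On \(\sum_i u_i<1\) the coordinate fields are \(\underline{a}_i-\underline{a}_n\) for \(i<n\) and \(\overline{f(a)}-\underline{a}_n\); rewriting these in the basis \(e_i=\underline{a}_i-\overline{f(a)}\) as \(e_i-e_n\) and \(-e_n\) and pairing against clause (1) returns \(M_{k\ell}\), and the piece \(\sum_i u_i>1\) follows by the identical reflection computation using the apex \(\underline{a}_{n+1}\). Assembling the pieces: since the two pieces of each chart meet continuously along \(\sum_i u_i=1\) and each yields the same constant matrix \(M\), the pullback metric is the single constant tensor \(M\) on all of \(\bipyr_n\), whose curvature vanishes. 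Finally, every point of \(\oser(A)\) lies in the interior of a facet or in an open \((n-1)\)-face of \(\simpcmplx(A)\), and each such face is either input-only (covered by some \(\underline{\phi}_a\)) or contains the unique output vertex of its two facets (covered by some \(\overline{\phi}_a\)); thus the serene charts cover \(\oser(A)\) and the curvature vanishes identically.

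The main obstacle is precisely the verification that the two pieces of each chart yield the \emph{same} constant matrix across the crease \(\sum_i u_i=1\). A priori the second piece is built from the opposite output vertex \(\overline{f(a')}\) (or, for the output chart, the opposite input vertex \(\underline{a}_{n+1}\)), so it is not evident that its constant Gram matrix should agree with that of the first piece — indeed this agreement is exactly what makes \(g\) a well-defined smooth metric across the creases in the first place. That it holds comes down to the combinatorial identity \(\sum_i(\delta_{ik}-\tfrac{2}{n})=-1\) together with the value \(1+\delta_{ij}\) forced by the definition, and this is the one place where the specific reflection coefficients and the factor \(\tfrac{2}{n}\) built into \(\bipyr_n\) genuinely enter; I would carry this expansion out in full rather than leaving it as routine.
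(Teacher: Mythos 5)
Your proposal is correct and follows essentially the same route as the paper: in each serene chart one computes the coordinate vector fields piecewise, pairs them against the standard metric, and finds the constant Gram matrix \(J_n+I_n=[1+\delta_{k\ell}]\), whence the curvature vanishes. Your explicit expansion of the reflection piece via \(\partial_{u_k}\underline{\phi}_a=\sum_i(\delta_{ik}-\tfrac{2}{n})(\underline{a}_i-\overline{f(a')})\) is a correct and slightly more detailed version of the computation the paper leaves implicit in its table of tangent vector fields.
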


\begin{proof}
We show that \(g\) is constant on any given coordinate chart, which makes the metric tensor \(0\).

Given \(a=(a_1,\dots,a_n)\in\nct(A)\) consider the serene chart of input type \(\underline{\phi}_a\). Given \(k\in[n]\) define a tangent vector field \(v_k\colon\im(\underline{\phi}_a)\to T\oser(A)\) to \(\oser(A)\) on \(\im(\underline{\phi}_a)\) by setting
    \[
        (v_k)_x=\underline{a}_k-\overline{f(a)}
    \]
when \(x=\underline{\phi}_a(u)\) where \(\sum_{i=1}^nu_i<1\),
    \[
        (v_k)_x=\underline{a}_k
    \]
when \(x=\underline{\phi}_a(u)\) where \(\sum_{i=1}^nu_i=1\), and
    \[
        (v_k)_x=(\underline{a}_k-\overline{f(a')})-\frac{2}{n}\sum_{\ell=1}^n(\underline{a}_\ell-\overline{f(a')})
    \]
when \(x=\underline{\phi}_a(u)\) where \(\sum_{i=1}^nu_i>1\).

Note that at each point \(x\in\im(\underline{\phi}_a)\) we have that \(\set{(v_1)_x,\dots,(v_n)_x}\) is a basis for \(T_x\) and that the vector fields \(v_k\) pull back to the standard basis constant vector fields on \(\bipyr_n\).

The matrix of \(g\) with respect to this coordinate chart \(\underline{\phi}_a\) and basis is
    \[
        [g_x((v_i)_x,(v_j)_x)]=[1+\delta_{ij}]=J_n+I_n
    \]
where \(J_n\) is the \(n\times n\) matrix whose entries are all \(1\) and \(I_n\) is the \(n\times n\) identity matrix. Since the matrix of \(g\) is constant as a function of \(x\) we find that \(\oser(A)\) is flat when endowed with \(g\) in \(\im(\underline{\phi}_a)\).

    \begin{table}
        \centering
        \begin{tabular}{r|c|c}
            \((v_k)_x\) & \(k<n\) & \(k=n\) \\ \hline
            \multirow{2}{*}{\(\sum_{i=1}^nu_i<1\)} & \multirow{2}{*}{\((\underline{a}_k-\overline{f(a)})-(\underline{a}_n-\overline{f(a)})\)} & \multirow{2}{*}{\(-(\underline{a}_n-\overline{f(a)})\)} \\
            & & \\ \hline
            \multirow{2}{*}{\(\sum_{i=1}^nu_i=1\)} & \multirow{2}{*}{\(\underline{a}_k\)} & \multirow{2}{*}{\(\overline{f(a)}\)} \\
            & & \\ \hline
            \multirow{5}{*}{\(\sum_{i=1}^nu_i>1\)} & & \\
            & \((\underline{a}_i-\overline{f(a)})-\) & \(-\frac{2}{n}\sum_{\ell=1}^{n-1}(\underline{a}_\ell-\overline{f(a)})+\) \\
            & \(\frac{2}{n}\sum_{\ell=1}^{n-1}(\underline{a}_\ell-\overline{f(a)})+\) & \((\underline{a}_{n+1}-\overline{f(a)})\) \\
            & \((\underline{a}_{n+1}-\overline{f(a)})\) & \\
            & &
        \end{tabular}
        
        \medskip
        \medskip
        \caption{Output chart tangent vector fields}
        \label{fig:output_chart_tangent}
    \end{table}

Now consider the serene chart of output type \(\overline{\phi}_a\). Given \(k\in[n]\) define a tangent vector field \(v_k\colon\im(\overline{\phi}_a)\to T\oser(A)\) to \(\oser(A)\) on \(\im(\overline{\phi}_a)\) as in \autoref{fig:output_chart_tangent} where \(x=\overline{\phi}_a(u)\). By a slightly more involved calculation we again see that the matrix of \(g\) with respect to this coordinate chart \(\overline{\phi}_a\) and basis is
    \[
        [g_x((v_i)_x,(v_j)_x)]=[1+\delta_{ij}]=J_n+I_n,
    \]
so \(g\) is indeed constant on any coordinate chart.
\end{proof}

Recall that one can define the distance between two points in \(\oser(A)\) with respect to such a metric by defining the length \(L(\gamma)\) of a piecewise continuously differentiable curve \(\gamma\colon[0,1]\to\oser(A)\) to be
    \[
        L(\gamma)=\integ{0}{1}{\sqrt{g_{\gamma(t)}(\gamma'(t),\gamma'(t))}}{t}
    \]
and then defining the distance \(d(x,y)\) from \(x\in\oser(A)\) to \(y\in\oser(A)\) to be
    \[
        d(x,y)=\inf(\set[L(\gamma)]{\gamma(0)=x\text{ and }\gamma(1)=y}).
    \]

\section{Serenation}
\label{sec:serenation}
Now we have almost all the tools we require to complete the process analogous to desingularization of \(\simpcmplx(A)\). Since \(\oser(A)\) only includes the necessarily nonsingular points of \(\simpcmplx(A)\) (that is, those which do not belong to the \((n-2)\)-skeleton), we have already removed all singularities which may have been present. It remains to ``fill in holes'' in the appropriate fashion so that we obtain something which is topologically more interesting than a combinatorial graph.

The last ingredient is a slight modification of the usual notion of the completion of a metric space. Given a metric space \((S,d)\) let \(\cmplt(S,d)\) denote the set of all points in the metric completion of \((S,d)\). That is, \(\cmplt(S,d)\) is the set of all equivalences classes of Cauchy sequences of points in \(S\) under the equivalence relation induced by the metric \(d\). We endow \(\cmplt(S,d)\) with the metric topology induced by \(d\).

We say that a point \(x\) in a topological space \(T\) is \emph{\(n\)-Euclidean (in \(T\))} when there exists a neighborhood \(U\) of \(x\) which is homeomorphic to an open set in \(\R^n\).

\begin{defn}[Euclidean metric completion functor]
    We define a \emph{Euclidean metric completion functor}
        \[
            \eucmplt\colon\riem_n\to\mfld_n
        \]
    as follows. Given a Riemannian \(n\)-manifold \((M,g)\) consisting of a smooth \(n\)-manifold \(M\) and a Riemannian metric \(g\) whose corresponding metric on \(M\) is \(d\), define \(\eucmplt(M,g)\) to be the set of points
        \[
            \eucmplt(M,g)=\set[x\in\cmplt(M,d)]{x\text{ is }n\text{-Euclidean in }\cmplt(M,d)}
        \]
    equipped with the subspace topology inherited from \(\cmplt(M,d)\). Given a smooth map \(h\colon M_1\to M_2\) between Riemannian manifolds \((M_1,g_1)\) and \((M_2,g_2)\) which is a local isometry everywhere define
        \[
            \eucmplt(h)\colon\eucmplt(M_1,g_1)\to\eucmplt(M_2,g_2)
        \]
    by
        \[
            (\eucmplt(h))(\{x_1,x_2,x_3,\dots\}/\sim_1)=\{h(x_1),h(x_2),h(x_3),\dots\}/\sim_2
        \]
    where \(\{x_i\}_{i\in\PP}\) is a Cauchy sequence representing a member of \(\eucmplt(M_1,g)\) and \(\sim_1\) and \(\sim_2\) are the equivalence relations identifying Cauchy sequences with distance \(0\) from each other in \((M_1,g_1)\) and \((M_2,g_2)\), respectively.
\end{defn}

We give some motivating examples.

\begin{example}
Let \(S^n\) denote the \(n\)-sphere and let \(g\) be the usual metric on \(S^n\) inherited from \(\R^{n+1}\). Take \(M\) to be \(S^n\) minus a finite set of points. We have that \(\eucmplt(M,g)\cong S^n\).
\end{example}

The following example illustrates that in general
    \[
        \eucmplt(M,g)\neq\cmplt(M,d).
    \]

\begin{example}
Let \(T\) denote the \(2\)-dimensional torus and let
    \[
        C(T)=([0,1]\times T)/\sim
    \]
be a cone over \(T\). View \(C(T)\) as embedded in \(\R^5\) with
    \[
        T=\set[(\cos(\theta_1),\sin(\theta_1),\cos(\theta_2),\sin(\theta_2),0)]{\theta_1,\theta_2\in\R}
    \]
and cone point \((0,0,0,0,1)\). Take \(M\) to be \(C(T)\) minus the cone point \((0,0,0,0,1)\) and the points \(T\) of the original torus. We have that \(M\) is a smooth \(3\)-manifold which may be endowed with the usual metric inherited from \(\R^5\). We have that \(\eucmplt(M,g)=M\) while \(\cmplt(M,d)=C(T)\). The cone point \((0,0,0,0,1)\) is not included in \(\eucmplt(M,g)\) because this point of the cone over the torus is not \(3\)-Euclidean. To see this, note that if \((0,0,0,0,1)\) was \(3\)-Euclidean in \(C(T)\) then removing it would not change the fundamental group. Since the space obtained by removing the cone point from \(C(T)\) deformation retracts to \(T\) we would have that \(\pi_1(C(T))\cong\Z^2\). However, the cone over \(T\) is contractible so \(\pi_1(C(T))\) is trivial, a contradiction. The points of \(T\) are not included in \(\eucmplt(M,g)\) because they are also not Euclidean in the sense of having a neighborhood homeomorphic to an open set in \(\R^3\). We emphasize here that the points of \(T\) would be Euclidean in the sense of a manifold with boundary, as they do have neighborhoods homeomorphic to a half-space in \(\R^3\), but we do not allow that for our Euclidean metric completion.
\end{example}

\begin{defn}[Serenation functor]
We define a \emph{serenation functor}
    \[
        \ser_n\colon\ncaq_n\to\mfld_n
    \]
as follows. Given an alternating \(n\)-quasigroup \(A\) we define
    \[
        \ser(A)=\eucmplt(\oser(A),g)
    \]
where \(g\) is the standard metric on \(\oser(A)\). Given an NC homomorphism \(h\colon A\to B\) where \(A,B\in\aq_n\) we define
    \[
        \ser(h)=\eucmplt(h)
    \]
with respect to the the standard metrics on \(\oser(A)\) and \(\oser(B)\).
\end{defn}

Since every topological \(n\)-manifold for \(n=2\) or \(n=3\) carries a unique smooth structure we actually have functors which we might, by a slight abuse of notation, refer to as
    \[
        \ser_n\colon\ncaq_n\to\smfld_n
    \]
when \(n=2\) or \(n=3\).

It is perhaps too much to ask that any manifold \(M\) is of the form \(\ser(A)\) for some quasigroup \(A\), especially in light of the fact that any such manifold would have to be orientable and there exist nonorientable manifolds, but we would like to examine the slightly weaker condition that a connected orientable manifold \(M\) is a component of \(\ser(A)\) for some quasigroup \(A\).

\begin{defn}[Serene manifold]
We say that a connected orientable \(n\)-manifold \(M\) is \emph{serene} when there exists some alternating \(n\)-quasigroup \(A\) such that \(M\) is a component of \(\ser(A)\).
\end{defn}

This turns out to always be the case when \(M\) is triangulable. Our proof is constructive, provided a triangulation and orientation on the manifold in question.

\begin{thm}
\label{thm:orientable_triangulable}
Every connected orientable triangulable \(n\)-manifold is serene.
\end{thm}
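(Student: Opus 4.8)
The plan is to reverse-engineer an alternating \(n\)-quasigroup from a triangulation of \(M\). Fix a triangulation, giving a simplicial complex \(K\) with \(\geo(K)\cong M\); since \(M\) is a connected orientable manifold, \(K\) is a connected \(n\)-pseudomanifold and we may fix a coherent orientation on its facets. The facets of a simplicization carry one distinguished ``output'' vertex \(\overline{f(a)}\) together with \(n\) ``input'' vertices \(\underline{a}_i\), so to match this shape I would first pass to the stellar subdivision \(K'\) of \(K\) obtained by placing a new vertex \(c_\sigma\) at the barycenter of each facet \(\sigma\) and coning the boundary of \(\sigma\) to \(c_\sigma\). This does not change the homeomorphism type, so \(\geo(K')\cong M\), and now every facet of \(K'\) has exactly \(n\) original vertices together with one barycenter. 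I would declare the original vertices to be inputs and the barycenters to be outputs.

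Next I would read off a partial alternating quasigroup from \(K'\). Each \((n-1)\)-face \(F\) of \(K\) (a set of \(n\) input vertices) lies in exactly two facets \(\sigma,\sigma'\) of \(K\), which induce opposite orientations on \(F\); I set \(f\) equal to \(c_\sigma\) on the orderings of \(F\) in the orientation induced by \(\sigma\) and equal to \(c_{\sigma'}\) on the opposite orderings. Coherence of the global orientation makes this assignment well defined, constant on \(\alt_n\)-orbits (hence alternating), and free of repetitions along any line: the two orientation classes of a fixed \(F\) map to the distinct outputs \(c_\sigma\neq c_{\sigma'}\) (these give edges of type (1) of \(\ncgr\)), while the \(n+1\) input faces of a single \(\sigma\) all map to \(c_\sigma\) but their orderings differ in two positions rather than one (edges of type (2)), so no two lie on a common line. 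A direct check then shows that the \((n-1)\)-face adjacencies of \(K'\) are exactly the all-input and one-output adjacencies produced by a simplicization.

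The heart of the argument, and the step I expect to be the main obstacle, is to complete this seed to a \emph{total} alternating \(n\)-quasigroup \(A\) on a (possibly infinite) superset of \(\inp(A)\sqcup\out(A)\) without creating spurious facets glued to \(K'\) along \((n-1)\)-faces. Here I would exploit the fact that \(\ogeo_n\) discards the \((n-2)\)-skeleton, so two facets of \(\simpcmplx(A)\) lie in the same component of \(\oser(A)\) only if they share a genuine \((n-1)\)-face; thus I need only that no new noncommuting tuple shares such a face with a seed facet. An all-input \((n-1)\)-face already lies in two facets, and since a facet has exactly \(n\) inputs it can acquire no third facet; the only danger is a new noncommuting tuple whose output is some \(c_\sigma\) and which shares \(n-1\) inputs with a seed facet. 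I would therefore complete the partial operation by an iterated amalgamation in the spirit of Evans' theorem for partial quasigroups, each step adjoining fresh reservoir elements to resolve an undefined value or division while preserving the alternating identity, and arrange the completion so that every tuple that is forced to take one of the values \(c_\sigma\) is, outside the seed, commutative (equivalently, fully \(\perm_n\)-symmetric). Granting such an isolation-preserving completion lemma, no facet outside \(K'\) meets \(K'\) along an \((n-1)\)-face.

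Finally I would assemble the pieces. By the previous step the facets of \(K'\) form a union of connected components of \(\oser(A)\); as \(K'\) is connected this union is a single component, equal to \(\geo(K')\) with its \((n-2)\)-skeleton removed, that is, \(M\) with a codimension-two subcomplex deleted. The standard metric makes this a flat manifold built from regular simplices with edges of length \(\sqrt{2}\), and since distinct components of \(\oser(A)\) lie at infinite distance the functor \(\eucmplt\) acts componentwise. The metric completion of this component is homeomorphic to \(M\), and because \(M\) is a manifold every one of its points is \(n\)-Euclidean, so \(\eucmplt\) retains all of them; hence this component of \(\ser(A)=\eucmplt(\oser(A),g)\) is precisely \(M\). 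Thus \(M\) is a component of \(\ser(A)\) and is serene, with the degenerate case \(n=1\) handled directly.
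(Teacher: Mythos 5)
Your overall strategy is the same as the paper's: barycentrically subdivide each facet to create output vertices, use the coherent orientation to read off a partial alternating operation whose seed facets reproduce the triangulation, complete to a total alternating \(n\)-quasigroup, and observe that \(\eucmplt\) restores exactly the deleted \((n-2)\)-skeleton. The front and back ends of your argument are fine. The problem is the middle: you explicitly defer the entire technical content to an unproven ``isolation-preserving completion lemma,'' and that lemma \emph{is} the proof. The paper does not cite any Evans-type amalgamation result (none is known for the alternating variety --- indeed \autoref{prob:evans} records the finite completion question as open); instead it constructs the completion by hand, iteratively adjoining a fresh formal element \((a_1,\dots,a_n)/\alt_n\) for every undefined product and a fresh formal element \(((x,a_2,\dots,a_n)/\alt_n,a_{n+1})\) for every unsolvable division equation, and then verifies totality, well-definedness, the alternating identity, and \emph{unique} solvability of every equation. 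Until you exhibit such a construction (or prove your lemma some other way), the theorem is not established: it is not obvious a priori that the partial operation extends to any total alternating quasigroup at all, let alone one that keeps the seed facets isolated --- the cautionary discussion after the paper's proof about \(\free(X)/\theta_\Gamma\) possibly collapsing generators is precisely a warning that ``general nonsense'' does not supply this.

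Two smaller points. First, your isolation criterion is misstated: you cannot arrange that every tuple forced to output some \(c_\sigma\) outside the seed is commutative, since division forces, for every \((n-1)\)-tuple of inputs and every \(c_\sigma\), a solution \(y\) with \(f(y,a_2,\dots,a_n)=c_\sigma\), and these tuples are generically noncommuting and do contribute facets containing \(\overline{c_\sigma}\). What actually saves the day in the paper's construction is weaker and suffices: such a new facet contains a fresh vertex \(\underline{y}\notin\vertices(\Gamma)\), so it can share an \((n-1)\)-face with a seed facet only if all of \(a_2,\dots,a_n\) are vertices of \(\sigma\) --- and in that case the division equation already has its unique solution among the vertices of \(\sigma\) in the seed (exactly one of the two remaining vertices orders into the orientation class \(\vec{\sigma}\)), so no fresh \(y\) is ever adjoined there. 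Second, you say nothing about \(n=2\), where \(\alt_2\) is trivial and a single formal division element cannot serve both coordinates; the paper flags that this case needs separate left and right division generators. Your closing remark about \(n=1\) is not the relevant low-dimensional exception.
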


\begin{proof}
We argue here the case where \(n>2\). The \(n=2\) case requires a slight modification, which we will indicate without giving the full details.

Let \(M\) be a connected orientable triangulable \(n\)-manifold and let \(\Gamma\) be a simplicial complex with \(\geo(\Gamma)=M\). We effectively subdivide \(\Gamma\) and use the orientation on \(M\) to define the requisite quasigroup \(A\).

For each facet \(\gamma=\set{s_1,\dots,s_{n+1}}\) of \(\Gamma\) fix an orientation
    \[
        \vec{\gamma}=(s_1,\dots,s_{n+1})/\alt_{n+1}
    \]
so that the complex \(\Gamma\) is oriented.

We define a sequence of sets \(A_i\) and a sequence of partial operations
    \[
        f_i\colon A_i^n\rightharpoonup A_i
    \]\
indexed over \(\N\). The desired quasigroup \(A\) will have its multiplication given by the union of these partial operations.

Define
    \[
        A_0=\vertices(\Gamma)\cup\fct(\Gamma)
    \]
and define a partial operation \(f_0\colon A_0^n\rightharpoonup A_0\) by
    \[
        f_0(s_1,\dots,s_n)=\gamma
    \]
when \(\gamma\) is the unique facet of \(\Gamma\) such that there exists some \(s_{n+1}\in\vertices(\Gamma)\) with \((s_1,\dots,s_{n+1})\in\vec{\gamma}\). We then set
    \[
        A_i^\times=\set[(a_1,\dots,a_n)/\alt_n\in A_i^n/\alt_n]{(a_1,\dots,a_n)\notin\dom(f_i)}
    \]
and let \(A_i^\div\) be the set of all pairs \(((x,a_2,\dots,a_n)/\alt_n,a_{n+1})\) where \(x\) is a variable, \(a_2,\dots,a_{n+1}\in A_i\), and there is no \(a_1\in A_i\) such that
    \[
        f_i(a_1,\dots,a_n)=a_{n+1}.
    \]
The set \(A_i^\times\) corresponds to all those \(n\)-ary products as yet undefined for \(f_i\) while the set \(A_i^\div\) corresponds to all those equations
    \[
        f_i(x,a_2,\dots,a_n)=a_{n+1}
    \]
which do not yet have a solution.

Observe that \(A_i^\div\) covers all equations where exactly one of the arugments of \(f\) is the unknown \(x\), for the action of the alternating group can move the \(x\) appearing in \((x,a_2,\dots,a_n)\) to any other slot. This is where a modification to the argument is needed for the \(n=2\) case. Since \(\alt_2\) is trivial we would need to introduce new elements corresponding to the left and right division operations. The remainder of the argument is otherwise similar.

We take
    \[
        A_{i+1}=A_i\cup A_i^\times\cup A_i^\div
    \]
and define
    \[
        f_{i+1}\colon A_{i+1}^n\rightharpoonup A_{i+1}
    \]
by setting
    \[
        f_{i+1}(a_1,\dots,a_n)=f_i(a_1,\dots,a_n)
    \]
when \((a_1,\dots,a_n)\in\dom(f_i)\),
    \[
        f_{i+1}(a_1,\dots,a_n)=(a_1,\dots,a_n)/\alt_n
    \]
when \((a_1,\dots,a_n)/\alt_n\in A_i^\times\), and
    \[
        f_{i+1}(u_1,\dots,u_n)=a_{n+1}
    \]
when \((u_1,\dots,u_n)/\alt_n\) is
    \[
        (((x,a_2,\dots,a_n)/\alt_n,a_{n+1}),a_2,\dots,a_n)/\alt_n
    \]
and \(a_2,\dots,a_{n+1}\in A_i\). That is, \(\dom(f_{i+1})\) consists of \(\dom(f_i)\) along with \(A_i^n\setminus\dom(f_i)\) and all tuples of the form \((u_1,\dots,u_n)\) where \((u_1,\dots,u_n)/\alt_n\) is
    \[
        (((x,a_2,\dots,a_n)/\alt_n,a_{n+1}),a_2,\dots,a_n)/\alt_n
    \]
and the values of \(f_{i+1}\) when applied to these arguments are as listed above. Note that when \(j\ge i\) the members of \(A_j^\times\) and \(A_j^\div\) can never belong to \(A_i\). 

Let \(A=\bigcup_{i\in\N}A_i\) and let \(f\colon A^n\to A\) be given by
    \[
        f(a_1,\dots, a_n)=f_i(a_1,\dots,a_n)
    \]
when \((a_1,\dots,a_n)\in\dom(f_i)\). We claim that \(f\) is an alternating \(n\)-quasigroup operation on \(A\).

We first show that \(f\) is indeed a function. Consider a tuple \((a_1,\dots,a_n)\in A^n\). By definition of \(A\) there exists a function \(\sigma\colon[n]\to\N\) such that \(a_i\in A_{\sigma(i)}\). Moreover, taking \(j=\max_i\sigma(i)\) we see that \((a_1,\dots,a_n)\in A_{j+1}^n\). Either \((a_1,\dots,a_n)\in\dom(f_j)\), in which case
    \[
        f(a_1,\dots,a_n)=f_j(a_1,\dots,a_n)
    \]
or \((a_1,\dots,a_n)\in A_j^n\) but \((a_1,\dots,a_n)\notin\dom(f_j)\) in which case
    \[
        f(a_1,\dots,a_n)=f_{j+1}(a_1,\dots,a_n)=(a_1,\dots,a_n)/\alt_n.
    \]
In any case we see that at least one value is assigned for \(f(a_1,\dots,a_n)\) for each \((a_1,\dots,a_n)\in A^n\).

It remains to show that \(f\) is well-defined. Define a family of sets \(D_i\) inductively by taking \(D_0=\dom(f_0)\) and
    \[
        D_{i+1}=\dom(f_{i+1})\setminus\dom(f_i)
    \]
We claim that \(A^n\) can be written as a disjoint union
    \[
        A^n=\coprod_{i\in\N}D_i.
    \]
To see this, consider a tuple \((a_1,\dots,a_n)\in A^n\). As before we choose a function \(\sigma\colon[n]\to\N\) such that \(a_i\in A_{\sigma(i)}\). This time we also require that \(\sigma(i)\) be the least natural number \(k\) such that \(a_i\in A_k\). Again we set \(j=\max_i\sigma(i)\) and we observe that \((a_1,\dots,a_n)\in A_j^n\) but \((a_1,\dots,a_n)\notin A_k^n\) for any \(k<j\). We therefore have that \((a_1,\dots,a_n)\notin\dom(f_k)\) for any \(k<j\) and hence \((a_1,\dots,a_n)\notin D_k\) for any \(k<j\). Either \((a_1,\dots,a_n)\in\dom(f_j)\), in which case \((a_1,\dots,a_n)\in D_j\) and \((a_1,\dots,a_n)\notin D_k\) for any \(k>j\), or \((a_1,\dots,a_n)\notin\dom(f_j)\), in which case \((a_1,\dots,a_n)\in\dom(f_{j+1})\) and hence \((a_1,\dots,a_n)\notin D_j\), \((a_1,\dots,a_n)\in D_{j+1}\), and \((a_1,\dots,a_n)\notin D_k\) for any \(k>j+1\). We find that each member of \(A^n\) belongs to exactly one of the \(D_i\) and it is evident by the definition of \(A\) and the \(f_i\) that each \(D_i\subset A^n\) so we will have that \(f\) is well-defined if we can show that its restriction to each \(D_i\) is well-defined.

When \((s_1,\dots,s_n)\in D_0\) then we have that \(f(s_1,\dots,s_n)\) is indeed assigned a unique value since we take \(f(s_1,\dots,s_n)\) to be the unique facet \(\gamma\) of \(\Gamma\) such that there exists some \(s_{n+1}\) such that \((s_1,\dots,s_{n+1})\in\vec{\gamma}\). When \((u_1,\dots,u_n)\in D_i\) for some \(i>0\) we have that either all of the \(u_i\) belong to \(A_{i-1}\) or at least one does not. In the case that all of the \(u_i\) belong to \(A_{i-1}\) we have that \((u_1,\dots,u_n)/\alt_n\in A_i^\times\) and
    \[
        f(u_1,\dots,u_n)=f_i(u_1,\dots,u_n)=(u_1,\dots,u_n)/\alt_n.
    \]
In the case that at least one of the \(u_i\) belongs to \(A_i\setminus A_{i-1}\) it must be that \((u_1,\dots,u_n)/\alt_n\) is
    \[
        (((x,a_2,\dots,a_n)/\alt_n,a_{n+1}),a_2,\dots,a_n)/\alt_n
    \]
where all the \(a_j\) belong to \(A_i\) and
    \[
        f(u_1,\dots,u_n)=f_i(u_1,\dots,u_n)=a_{n+1}.
    \]

It was permissible here to only consider the unique \(i\) for which \((a_1,\dots,a_n)\in D_i\) since whenever \(j>i\) we have by our inductive definition of the \(f_k\) that \(f_j(a_1,\dots,a_n)=f_i(a_1,\dots,a_n)\).

We now show that \(f\) satisfies the alternating axiom. Suppose that \((u_1,\dots,u_n)\) and \((v_1,\dots,v_n)\) both belong to \(A^n\) and
    \[
        (u_1,\dots,u_n)/\alt_n=(v_1,\dots,v_n)/\alt_n.
    \]
Observe that if \((u_1,\dots,u_n)\in D_i\) then \((v_1,\dots,v_n)\in D_i\) and
    \[
        f_i(u_1,\dots,u_n)=f_i(v_1,\dots,v_n)
    \]
since by definition the restriction of \(f_i\) to \(D_i\) is invariant under the action of the alternating group \(\alt_n\).

Consider the equation
    \[
        f(y,a_2,\dots,a_n)=a_{n+1}
    \]
where the \(a_i\in A\) and \(y\) is a variable. Again we let \(\sigma\colon\set{2,\dots,n}\to\N\) be such that \(\sigma(i)\) is the least natural number \(k\) such that \(a_i\in A_k\) and take \(j=\max_i\sigma(i)\). Either there is some \(a_1\in A_j\) such that \((a_1,\dots,a_n)\in\dom(f_j)\) and \(f_j(a_1,\dots,a_n)=a_{n+1}\), in which case \(y=a_1\) is a solution, or there is no such \(a_1\in A_j\), in which case \(y=((x,a_2,\dots,a_n)/\alt_n,a_{n+1})\) is a solution. It remains to establish the uniqueness of these solutions. Suppose that
    \[
        f_s(y,a_2,\dots,a_n)=a_{n+1}
    \]
has no solution for \(s<r\) and has at least one solution for \(s=r\). If \(r=0\) then there is a unique solution to this equation by the geometric definition of \(f_0\). If \(r>0\) then either \(a_{n+1}=(a_1,\dots,a_n)/\alt_n\) for some \(a_1\in A_{r-1}\) and the only possible solution is \(y=a_1\) or \(a_{n+1}\in A_{r-1}\) and the only possible solution is
    \[
        y=((x,a_2,\dots,a_n),a_{n+1}).
    \]
In any case, the first \(s\) for which the equation
    \[
        f_s(y,a_2,\dots,a_n)=a_{n+1}
    \]
has a solution only yields a single solution to that equation. In order to see that no new solutions to this equation may be introduced, again let \(r\) be the least value of \(s\) for which the equation in question has a solution. When \(s>r\) the only new products that are introduced yield elements of either \(A_s\) (products of tuples that belonged to \(A_{s-1}^n\) but were not in the domain of \(f_{s-1}\)) or \(A_{s-1}\) (when introducing solutions to equations which did not have a solution for \(f_{s-1}\)). The only way we could obtain the \(a_{n+1}\) under consideration (which must belong to either \(A_r\) or \(A_{r-1}\)) for \(s>r\) is when \(s=r+1\) and we are adding a solution to an equation. Only solutions to equations which did not previously have a solution can be introduced by our rule, however, so this cannot occur. There is thus exactly one solution to each equation of the form
    \[
        f(y,a_2,\dots,a_n)=a_{n+1}.
    \]

The argument concerning equations where the unknown \(y\) appears in a different coordinate are covered by the preceding one by the alternating property.

We claim that \(M\) is a component of \(\ser(A)\). To see this, first consider the simplicial complex
    \[
        \Gamma'=\bigcup_{\mathclap{\substack{\gamma\in\fct(\Gamma)\\s\in\gamma}}}\pow((\gamma\setminus\set{s})\cup\set{\gamma}).
    \]
Note that \(\geo(\Gamma')\cong\geo(\Gamma)\). The new vertices \(\gamma\) correspond to the products of vertices from \(\Gamma\) in \(A\) determined by the partial operation \(f_0\). Since \(M\) is connected so is \(\geo(\Gamma')\), which means that there is a component of \(\oser(A)\) which is homeomorphic to
    \[
        U=\geo(\Gamma')\setminus(\geo(\Gamma'))^{(n-2)},
    \]
which is the geometric realization of \(\Gamma'\) with its \((n-2)\)-skeleton excised. Since \(U\) is still connected we have that \(\eucmplt(U,g)\) (where \(g\) is the restriction of the standard metric on \(\oser(A)\) to \(U\)) is a component of \(\ser(A)\). Since \(U\) is all but the \((n-2)\)-skeleton of the manifold \(M\) we have that \(\eucmplt(U,g)\cong M\) and hence \(M\) is homeomorphic to a component of \(\ser(A)\).
\end{proof}

In the case of second countable smooth manifolds triangulation is always possible so we have the following corollary.

\begin{cor}
\label{cor:smooth_manifold}
Every connected orientable smooth manifold is serene.
\end{cor}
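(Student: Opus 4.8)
The plan is to derive this corollary directly from \autoref{thm:orientable_triangulable} by establishing that every connected orientable smooth \(n\)-manifold is triangulable. The entire content of the argument is therefore the classical triangulation theorem for smooth manifolds; no new construction specific to serenation is required.

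First I would observe that any connected manifold in the sense of this paper is automatically second countable. Indeed, by our standing convention a manifold is a disjoint union of (arbitrarily many) second countable connected components, so a connected manifold consists of a single such component and is itself second countable. Hence a connected orientable smooth \(n\)-manifold \(M\) is in particular a second countable smooth manifold. This reduction to a single component is the only reason second countability needs to be addressed at all, but it is essential for what follows.

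Next I would invoke the Cairns--Whitehead triangulation theorem, which asserts that every second countable smooth manifold admits a triangulation compatible with its smooth structure; in particular there is a simplicial complex \(\Gamma\) with \(\geo(\Gamma)\) homeomorphic to \(M\). Thus \(M\) is triangulable. Since \(M\) is also connected and orientable by hypothesis, it satisfies every hypothesis of \autoref{thm:orientable_triangulable}, which therefore applies and shows that \(M\) is serene, completing the proof.

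The only genuine obstacle is the triangulation theorem itself, a substantial classical result due to Cairns and Whitehead; the remaining steps are routine bookkeeping. The single point requiring mild care is that the triangulation theorem is stated for second countable (equivalently, paracompact) manifolds, which is precisely why the passage to a single connected component must be made before the theorem can be cited.
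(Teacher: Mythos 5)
Your proposal is correct and follows exactly the paper's route: the paper's proof of \autoref{cor:smooth_manifold} is the single citation that every second countable smooth manifold can be triangulated, after which \autoref{thm:orientable_triangulable} applies. Your additional remark that a connected manifold is second countable under the paper's standing convention is a reasonable (implicit in the paper) piece of bookkeeping, not a different argument.
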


\begin{proof}
Every second countable smooth manifold can be triangulated \cite{whitehead}.
\end{proof}

The proof of the preceding theorem demonstrates that given a connected orientable triangulable manifold \(M\) a certain universal construction always yields an alternating quasigroup \(A\) for which \(M\) is homeomorphic to a component of \(\ser(A)\). Again let \(\Gamma\) be a simplicial complex with \(\geo(S)=M\) and fix an orientation of \(\Gamma\) as in the proof above. Let \(X=\vertices(\Gamma)\cup\fct(\Gamma)\) and take \(\free(X)\) to be the free alternating \(n\)-quasigroup generated by the set \(X\), as discussed briefly in \autoref{sec:alternating_quasigroups}. Define \(\mu_\Gamma\subseteq(\free(X))^2\) by
    \[
        \mu_\Gamma=\set[(\gamma,f(s_1,\dots,s_n))]{(\exists s_{n+1}\in\vertices(\Gamma))((s_1,\dots,s_{n+1})\in\vec{\gamma})}.
    \]
We set \(\theta_\Gamma=\cg^{\free(X)}(\mu_\Gamma)\) and define
    \[
        A=\free(X)/\theta_\Gamma.
    \]
This alternating quasigroup \(A\) is the most general alternating quasigroup presented by the relations \(\gamma=f(s_1,\dots,s_n)\) given in the binary relation \(\mu_\Gamma\).

It would be tempting to claim that by this general nonsense we have an example of an alternating quasigroup for which \(M\) is homeomorphic to a connected component of \(\ser(A)\), simplifying the proof of \autoref{thm:orientable_triangulable}. Unfortunately, the abstract existence of the quotient of \(\free(X)\) given by the presentation \(\mu_\Gamma\) does not suffice to prove the theorem, as it is possible that this quotient identifies together elements coming from \(X\) which we need to remain distinct. A priori we do not know whether, for example, the quotient \(\free(X)/\theta_\Gamma\) is commutative and hence yields an empty manifold for \(\ser(A)\).

In order to show that no such undesirable collapsing occurs we need to either prove abstractly using equational logic that no other nontrivial relations are induced by those in \(\mu_\Gamma\) or construct an explicit example of an alternating quasigroup which satisfies the desired relations but keeps all the elements of \(\vertices(\Gamma)\cup\fct(\Gamma)\) distinct, which is what we did in our proof of \autoref{thm:orientable_triangulable}. Since no relations can hold in \(\free(X)/\theta_\Gamma\) which do not hold in every model of the relations in \(\mu_\Gamma\) on the same set of generators \(\vertices(\Gamma)\cup\fct(\Gamma)\), our proof does show that we obtain the desired manifold \(M\) as a component of \(\free(X)/\theta_\Gamma\).

\subsection{Examples of serenation}
Our earlier work leaves little more to do in describing \(\ser(A)\) for those quasigroups which we have already visited in previous examples. We begin with the quaternion group considered in \autoref{ex:quaternion_open} and \autoref{ex:quaternion_graph}.

\begin{example}
Let \(G\) denote the quaternion group of order \(8\). By our analysis in \autoref{ex:quaternion_open} we have that \(\oser(G)\) consists of three \(2\)-spheres, each of which has had \(6\) points removed. It follows that \(\ser(G)\) is homeomorphic to the disjoint union of three \(2\)-spheres. This is the same space as the desingularized complex \(Y(Q_8)\) of Herman and Pakianathan \cite[p.18]{herman}.
\end{example}

The order \(5\) and \(6\) ternary quasigroups we've examined yield spheres as well.

\begin{example}
Let \(A\) be the order \(5\) alternating \(3\)-quasigroup from \autoref{ex:order_five}, \autoref{ex:order_five_open}, and \autoref{ex:order_five_graph}. Since \(\oser(A)\) consists of a \(3\)-sphere minus a \(1\)-dimensional subcomplex we find that \(\ser(A)\) is homeomorphic to the \(3\)-sphere.
\end{example}

\begin{example}
Let \(A\) be the order \(6\) alternating \(3\)-quasigroup from \autoref{ex:order_six}, \autoref{ex:order_six_open}, and \autoref{ex:order_six_graph}. Since \(\oser(A)\) consists of a \(3\)-sphere minus a \(1\)-dimensional subcomplex we find that \(\ser(A)\) is homeomorphic to the \(3\)-sphere.
\end{example}

\section{Compact manifolds and Latin cubes}
\label{sec:latin_cubes}
In general the construction in the proof of \autoref{thm:orientable_triangulable} may be expected to yield an infinite alternating quasigroup \(A\) such that \(M\) is a component of \(\ser(A)\). Necessarily this is the case when \(M\) is given with a triangulation \(\Gamma\) where \(\Gamma\) is infinite, for each member \(s\in\vertices(\Gamma)\) becomes a distinct generator of \(A\). In the case that \(\Gamma\) is a finite set (or, topologically speaking, in the case that \(M\) is compact) can we always take \(A\) to be finite?

\begin{defn}[Quasifinite manifold]
We say that a connected compact orientable smooth \(n\)-manifold \(M\) is \emph{quasifinite} when there exists a finite alternating \(n\)-quasigroup \(A\) such that \(M\) is homeomorphic to a component of \(\ser(A)\).
\end{defn}

\begin{prob}
\label{prob:finiteness_compactness}
Is every connected compact orientable triangulable manifold quasifinite?
\end{prob}

This problem's solution would be implied by a positive solution of another, more combinatorial, problem which has the flavor of a standard question in the theory of (binary) quasigroups at large. The prototype for that standard question is the Evans Conjecture, which states that every partial Latin square of order \(n\) with at most \(n-1\) entries may have its other entries filled in so as to obtain a complete Latin square. The Evans Conjecture was proven by Smetaniuk in 1981 \cite{smetaniuk}, and many similar results followed, including those on \(3\)-dimensional Latin cubes in \cite{kuhl}.

\begin{defn}[Partial Latin cube]
Given a set \(A\) and some \(n\in\PP\) we say that \(\theta\subseteq A^{n+1}\) is a \emph{partial Latin \(n\)-cube} when for each \(i\in[n+1]\) and each choice of
    \[
        a_1,\dots,a_{i-1},a_{i+1},\dots,a_{n+1}\in A
    \]
there exists at most one \(a_i\in A\) so that
    \[
        (a_1,\dots,a_{n+1})\in\theta.
    \]
\end{defn}

This is to say that a partial Latin cube is the graph of a partial function from \(A^n\) to \(A\) which satisfies the identities of an \(n\)-quasigroup wherever all the relevant operations are defined.

\begin{defn}[Partial alternating Latin cube]
Given a set \(A\) and some \(n\in\PP\) we say that \(\theta\subseteq A^{n+1}\) is a \emph{partial alternating Latin \(n\)-cube} when \(\theta\) is a partial Latin cube and for each \(\alpha\in\alt_n\) we have that if
    \[
        (a_1,\dots,a_n,b_1)\in\theta
    \]
and
    \[
        (a_{\alpha(1)},\dots,a_{\alpha(n)},b_2)\in\theta
    \]
then \(b_1=b_2\).
\end{defn}

\begin{defn}[Complete Latin cube]
We say that a partial Latin \(n\)-cube \(\theta\subseteq A^{n+1}\) is a \emph{complete Latin \(n\)-cube} when for each \(i\in[n+1]\) and each choice of
    \[
        a_1,\dots,a_{i-1},a_{i+1},\dots,a_{n+1}\in A
    \]
there exists at least one \(a_i\in A\) so that
    \[
        (a_1,\dots,a_{n+1})\in\theta.
    \]
\end{defn}

That is, a complete Latin \(n\)-cube is the graph of an \(n\)-quasigroup operation. We also refer to complete Latin \(n\)-cubes simply as \emph{Latin \(n\)-cubes}, the ``complete'' emphasizing the distinction from partial Latin cubes. Note that we might have defined a Latin cube without reference to the notion of a partial Latin cube as a relation \(\theta\subseteq A^{n+1}\) such that for each choice of
    \[
        a_1,\dots,a_{i-1},a_{i+1},\dots,a_{n+1}\in A
    \]
there exists a unique \(a_i\in A\) so that
    \[
        (a_1,\dots,a_{n+1})\in\theta.
    \]

\begin{defn}[Finite partial Latin cube]
We say that a partial Latin cube \(\theta\subseteq A^{n+1}\) is \emph{finite} when \(A\) is a finite set.
\end{defn}

Using this language, our Evans-like problem for alternating quasigroups may be stated as follows.

\begin{prob}
\label{prob:evans}
Given a finite partial alternating Latin cube \(\theta\subseteq A^{n+1}\) does there always exist a finite complete alternating Latin cube \(\psi\subseteq B^{n+1}\) such that \(\theta\subseteq\psi\)?
\end{prob}

This problem is a bit weaker than the Evans conjecture for the case \(n=2\), as we don't posit a relationship between \(\abs{\theta}\) and \(\abs{B}\). In the \(n=2\) situation the veracity of the Evans conjecture implies that the answer to \autoref{prob:evans} is ``yes'' for \(n=2\). This in turn implies that the answer to \autoref{prob:finiteness_compactness} is ``yes'' for \(n=2\). Thus, we have a corollary to the Evans conjecture.

\begin{cor}
Every connected compact orientable surface is a component of the serenation of some finite binary quasigroup.
\end{cor}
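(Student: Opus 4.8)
The plan is to make explicit the two implications flagged in the discussion preceding \autoref{prob:evans}: that an affirmative answer to \autoref{prob:evans} for $n=2$ follows from the Evans conjecture, and that such an answer in turn settles \autoref{prob:finiteness_compactness} affirmatively for $n=2$, which is exactly the corollary. First I would invoke the classical fact that every compact surface is triangulable, so that a connected compact orientable surface $M$ admits a finite simplicial complex $\Gamma$ with $\geo(\Gamma)\cong M$. Fixing a coherent orientation of $\Gamma$ as in the proof of \autoref{thm:orientable_triangulable}, I form the finite set $A_0=\vertices(\Gamma)\cup\fct(\Gamma)$ together with the partial operation $f_0$ that records $f_0(s_1,s_2)=\gamma$ whenever $(s_1,s_2,s_3)\in\vec\gamma$. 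The first step is to note that the graph $\theta\subseteq A_0^3$ of $f_0$ is a finite partial Latin $2$-cube: fixing any two coordinates leaves at most one admissible value for the third, since in an oriented surface triangulation each directed edge lies on a unique coherently oriented triangle and each vertex of $\gamma$ has a unique cyclic successor. Because $\alt_2$ is trivial, the alternating condition is vacuous, so $\theta$ is in fact a finite partial alternating Latin $2$-cube.

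With $\theta$ in hand, the second step applies the Evans conjecture, proven by Smetaniuk \cite{smetaniuk}: it guarantees that the finite partial Latin square $\theta$ completes to a finite complete Latin square $\psi\subseteq B^3$ on a finite set $B\supseteq A_0$ with $\theta\subseteq\psi$, which is precisely an affirmative instance of \autoref{prob:evans} for $n=2$. I take $B$ to be the finite binary quasigroup whose multiplication table is $\psi$; since every binary quasigroup is an alternating binary quasigroup, $B\in\aq_2$. The key feature of this completion is that passing from $\theta$ to $\psi$ enlarges the underlying set yet keeps the elements of $A_0$ pairwise distinct and leaves $f^B$ agreeing with $f_0$ on all of $\dom(f_0)$.

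The final step is to rerun the component analysis from the end of the proof of \autoref{thm:orientable_triangulable}, now with the finite $B$ in place of the universal quasigroup constructed there. Because $f^B$ restricts to $f_0$ and the elements of $A_0$ stay distinct, the subdivided complex $\Gamma'$ of that proof embeds into $\simpcmplx(B)$ with the barycenter of each facet $\gamma$ identified with $\overline\gamma$, so its facets are exactly the sets $\set{\underline{s_i},\underline{s_j},\overline\gamma}$ with $(s_i,s_j,\cdot)\in\vec\gamma$. I then verify that this family of facets is sealed off from the rest of $\simpcmplx(B)$ inside $\oser(B)$: gluing in $\oser(B)$ occurs only along $1$-faces, the $0$-skeleton having been excised, and each edge of such a facet is shared with exactly one other such facet. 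An ``original'' edge $\set{\underline{s_i},\underline{s_j}}$ borders only the two facets with outputs $f^B(s_i,s_j)$ and $f^B(s_j,s_i)$, which are the two distinct triangles of $M$ across that edge (distinct because $M$ is a manifold and the orientation is coherent), while a ``spoke'' edge $\set{\underline{s_i},\overline\gamma}$ borders only the two subdivided triangles of $\gamma$ through $s_i$, uniqueness being forced in each case by the quasigroup division axiom. Hence the union $U$ of these open facets is a full connected component of $\oser(B)$ homeomorphic to $\geo(\Gamma')$ with its $0$-skeleton removed, and $\eucmplt(U,g)\cong M$ exactly as in the cited proof, so $M$ is a component of $\ser(B)$.

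The hard part will be this sealing argument: one must rule out that a cell freshly supplied by the Evans completion attaches, along a $1$-face, to a facet coming from the original triangulation. This is what could in principle fail for an arbitrary finite completion and is sidestepped by the purpose-built infinite quasigroup of \autoref{thm:orientable_triangulable}. The resolution is that a shared output vertex $\overline\gamma$ or input vertex $\underline{s_i}$ never glues two facets in $\oser(B)$, since vertices lie in the excised $0$-skeleton; only $1$-face adjacencies matter, and those are pinned down uniquely by the quasigroup axioms together with the surface condition, so no extraneous facet of $\simpcmplx(B)$ can be attached to $U$.
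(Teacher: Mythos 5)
Your proposal is correct and follows exactly the route the paper intends: the paper justifies this corollary only by asserting the chain ``Evans conjecture \(\Rightarrow\) \autoref{prob:evans} for \(n=2\) \(\Rightarrow\) \autoref{prob:finiteness_compactness} for \(n=2\),'' and you have simply made both implications explicit, including the key verification (via the quasigroup division axioms and the coherent orientation) that no facet supplied by the Evans completion can attach to the original triangulation along a \(1\)-face. The only cosmetic gap, which the paper shares, is that Smetaniuk's theorem requires at most \(\abs{B}-1\) filled cells, so one should first enlarge the symbol set before completing (or cite Evans' older embedding theorem for partial Latin squares directly).
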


This result appears to be significantly easier to establish than if we required the binary quasigroup in question be a group, as Herman and Pakianathan did. They were only able to show that an infinite family of such surfaces occurred as components of the serenation of some finite group \cite[Corollary 3.5]{herman}. One may have asked the following question having only seen the construction of Herman and Pakianathan, but the preceding corollary adds weight to it.

\begin{prob}
Is every connected compact orientable surface a component of the serenation of some finite group?
\end{prob}

\printbibliography

\end{document}